\documentclass[12pt]{amsart}

\usepackage{amsmath, amssymb, amsthm, amsfonts, amscd}
\usepackage{mathrsfs}

\input xy
\xyoption{all}

\usepackage{hyperref}
\usepackage{graphicx}
\usepackage{color}


\numberwithin{equation}{section}

\newtheorem{theorem}[equation]{Theorem}
\newtheorem*{theorem*}{Theorem}
\newtheorem{corollary}[equation]{Corollary}
\newtheorem{lemma}[equation]{Lemma}
\newtheorem*{lemma*}{Lemma}

\newtheorem{proposition}[equation]{Proposition}
\newtheorem*{proposition*}{Proposition}
\newtheorem*{expectation*}{Expectation}
\newtheorem{conjecture}[equation]{Conjecture}

\theoremstyle{definition}

\newtheorem{definition}[equation]{Definition}
\newtheorem{remark}[equation]{Remark}
\newtheorem{example}[equation]{Example}
\newtheorem*{example*}{Example}
\newtheorem*{question*}{Question}

\setcounter{tocdepth}{2}


\usepackage[margin=1in,marginparwidth=0.8in, marginparsep=0.1in]{geometry}



\def\AA{\mathbb{A}}

\def\BB{\mathbb{B}}
\def\CC{\mathbb{C}}
\def\DD{\mathbb{D}}

\def\LL{\mathbb{L}}

\def\PP{\mathbb{P}}
\def\QQ{\mathbb{Q}}
\def\RR{\mathbb{R}}

\def\SS{\mathbb{S}}
\def\TT{\mathbb{T}}

\def\ZZ{\mathbb{Z}}


\def\cA{\mathcal{A}}

\def\cC{\mathcal{C}}

\def\cF{\mathcal{F}}
\def\cG{\mathcal{G}}

\def\cL{\mathcal{L}}

\def\cO{\mathcal{O}}
\def\cP{\mathcal{P}}

\def\cS{\mathcal{S}}
\def\cT{\mathcal{T}}

\def\cW{\mathcal{W}}
\def\cW{\mathcal{W}}


\def\bT{\mathbf{T}}


\newcommand\frL{\mathfrak{L}}



\newcommand\tils{\widetilde{s}}


\def\hatX{\widehat{X}}




\newcommand{\Coh}{\textup{Coh}}

\newcommand\id{\textup{id}}

\newcommand{\Ind}{\textup{Ind}}

\newcommand{\Perf}{\textup{Perf}}

\newcommand{\Pic}{\textup{Pic}}

\newcommand{\QCoh}{\textup{QCoh}}

\newcommand\Spec{\textup{Spec}}

\newcommand\Sym{\textup{Sym}}

\newcommand\Tot{\textup{Tot}}

\newcommand\Aut{\textup{Aut}}
\newcommand\Hom{\textup{Hom}}
\newcommand\End{\textup{End}}




\newcommand\nc{\newcommand}
\nc\on{\operatorname}
\nc\ol{\overline}
\nc\ul{\underline}

\nc\oo{\infty}

\nc\Cone{\mathit{Cone}}
\nc\ssupp{\mathit{ss}}
\nc\risom{\stackrel{\sim}{\to}}
\nc\Sh{\mathit{Sh}}
\nc\un{\diamondsuit}
\nc\orient{\mathit{or}}
\nc\sing{\mathit{sing}}
\nc\MF{\on{MF}}
\nc\Log{\on{Log}}
\nc\Arg{\on{Arg}}
\nc\inthom{\mathit{Hom}}
\nc\colim{\varinjlim}

\nc\LGr{\on{LGr}}
\nc\wmsh{\mu sh^{\cW}}
\nc\oX{\overline{X}}
\nc\Trop{\on{Trop}}
\nc\Conv{\on{Conv}}
\nc\Newt{\on{Newt}}
\nc\Def{\on{Def}}
\nc\dz{\on{d}\!\ZZ/2\on{g}}
\nc\pp{\PP}
\nc\Exc{\pp}
\nc\Gv{{G^\vee}}
\nc\G{G}
\nc\pcn{\partial\CC^n}
\nc\ppp{\partial\PP}
\nc\nsecc{\mathscr{S}'}
\nc\nsec{\mathscr{S}}
\nc\wsh{w\mu sh}
\nc\tpc{\widetilde{\partial\CC^n}}
\nc\tpn{\tpc}
\nc\tcn{\tpc}
\nc\Bl{\on{Bl}}
\nc\LLC{\mathring{\mathbb{L}}}
\nc\LLc{\mathring{\mathbb{L}}}
\nc\fc{\mathring{f}}
\nc\Pc{\mathring{P}}
\nc\Xc{\mathring{X}}
\nc\tg{\tilde{g}}
\nc\cPc{\mathring{\mathcal{P}}}
\nc\lol{\text{\reflectbox{$\multimap$}}}
\nc\ldl{\text{\reflectbox{$\multimap$}}}
\nc\Crit{\on{Crit}}
\nc\wdv{W_{\Delta^\vee}}
\nc\trt{\widetilde{\on{trop}}}
\nc\dtmir{\partial\bT^{mir}}
\nc\ttmir{\partial\bT^{mir}_{loc}}
\nc\FS{\on{FS}}
\nc{\Vc}{\mathring{V}}
\nc{\D}{\mathbb{D}}
\nc{\Dc}{\mathring{\mathbb{D}}}
\nc\GIT{/\!/}
\nc\todo[1]{{\color{red}\bf (#1)}}





\begin{document}


\title{Mirror symmetry for Berglund-H\"ubsch Milnor fibers}

\author{Benjamin Gammage}
\address{Dept. of Mathematics\\Harvard University\\1 Oxford St., Cambridge 02138}
\email{gammage@math.harvard.edu}

\begin{abstract}
	We explain how to calculate the Fukaya category of the Milnor fiber of a Berglund-H\"ubsch invertible polynomial, proving many cases of a
	conjecture of Yank{\i} Lekili and Kazushi Ueda on homological mirror symmetry. 
	As usual, we begin by calculating the ``very affine'' Fukaya category; afterwards, we deform it, generalizing an earlier calculation of David Nadler. The main step of our calculation may be understood as determining a certain canonical extension of a perverse schober.
\end{abstract}

\maketitle


\tableofcontents


\section{Introduction}\label{sec:intro}
Our story begins with a quasihomogeneous polynomial in $n$ variables with $n$ terms,
\begin{align}
  W:\CC^n\to\CC, && W(x_1,\ldots,x_n) = \sum_{i=1}^n\prod_{j=1}^nx_j^{a_{ij}}
\end{align}
where the exponent matrix $A= (a_{ij})$ is invertible in $GL(n,\QQ)$. In this case we say that $W$ is an {\em invertible} polynomial, and its {\em dual} polynomial $W^\vee$ is a polynomial of the same type,
\begin{equation}
  W^\vee(z_1,\ldots,z_n) := \sum_{i=1}^n\prod_{j=1}^nz_j^{a_{ji}}
\end{equation}
with exponents determined by the transpose matrix $A^t = (a_{ji}).$ These polynomials also define a Pontryagin dual pair of finite abelian groups $\G$ and $\Gv,$ to be defined below, acting on $\CC^n$ and for which $W$ and $W^\vee,$ respectively, are invariant.

The study of such pairs of polynomials was initiated in \cite{BH93}, who conjectured a mirror-symmetry relationship between them: the Landau-Ginzburg models $(\CC^n,W)$ and $(\CC^n/\Gv,W^\vee)$ (or, equivalently, $(\CC^n/G,W)$ and $(\CC^n, W^\vee),$ if we had a better understanding of $\G$-equivariant Floer theory) were expected to be dual in the sense of homological mirror symmetry. In the past two and a half decades, features of this mirror symmetry, especially the relevant cohomological field theories (of which the A-side theory is the FJRW theory introduced in \cite{FJR} and the genus-0 B-side theory is K. Saito's theory of primitive forms \cite{Saito1}) and their many relations to singularity theory, have been studied in many individual cases. A statement of homological mirror symmetry for such pairs, with all the equivariance on the B-side, may be stated as follows:
\begin{conjecture}\label{conj:bh}
  There is an equivalence
	\begin{equation}\label{eq:bh-statement}
  \cW(\CC^n,W)\cong \MF(\CC^n/\Gv,W^\vee)
\end{equation}
  between the wrapped Fukaya category of the Liouville sector associated to Landau-Ginzburg model $(\CC^n,W)$ and the category of $\Gv$-equivariant matrix factorizations of $W^\vee$ on $\CC^n$.
\end{conjecture}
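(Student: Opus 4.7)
The plan is to reduce to a more tractable ``very affine'' model, compute
that model via established mirror-symmetry techniques for hypersurfaces in
algebraic tori, and then reconstruct the full Liouville-sector Fukaya
category by a deformation argument whose mirror avatar is the passage from
coherent sheaves to matrix factorizations. First I would trade
$\cW(\CC^n,W)$ for a category built from the Milnor fiber
$M := W^{-1}(1)$: the wrapped Fukaya category of the LG model is assembled
from the (partially wrapped) Fukaya category of $M$ together with the
monodromy endofunctor coming from rotation in the base, so it suffices to
understand $M$ equipped with this extra structure.

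Next I would pass to the very affine hypersurface
$M^\circ := M \cap (\CC^*)^n$. Its wrapped Fukaya category may be computed
by the microlocal/tropical techniques developed for very affine
hypersurfaces, producing a constructible sheaf of categories on a skeleton
cut out combinatorially from the Newton polytope of $W$. Invertibility of
the exponent matrix $A$ makes the resulting T-dual geometry transparent:
it should yield an equivalence between $\cW(M^\circ)$ and a category of
$\Gv$-equivariant coherent sheaves on the mirror toric variety,
generalizing Nadler's earlier calculation for special Newton data.

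To recover $\cW(M)$ from $\cW(M^\circ)$, one must re-attach the boundary
strata at infinity. Symplectically this is a stop-removal/partial-wrapping
operation, which mirror-symmetrically corresponds to deforming the sheaf
of categories on the skeleton by a class that tropicalizes to the fiber
at infinity of $W$. I would organize this in the language of perverse
schobers: $\cW(M^\circ)$ assembles into a schober on the skeleton whose
singular locus lies along the tropicalization of the boundary strata, and
the passage to $\cW(M)$ -- and then, after incorporating the monodromy,
to $\cW(\CC^n,W)$ -- is governed by a canonical extension of this schober
across the singular locus. On the B-side the resulting deformation
promotes $\Gv$-equivariant coherent sheaves on $\CC^n$ to $\Gv$-equivariant
matrix factorizations of some potential, which should be identifiable with
$W^\vee$ via the Berglund--H\"ubsch exponent-matrix transposition.

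The main obstacle, and the heart of the argument as flagged in the
abstract, is this third step: showing that the required extension of the
perverse schober \emph{exists and is canonical}, and that its associated
deformation class is precisely $W^\vee$. Once this canonical extension is
identified and its class matched with $W^\vee$ on the nose, the
equivalence~\eqref{eq:bh-statement} follows by combining the very-affine
mirror symmetry of step two with the deformation comparison of step
three; the word ``mostly'' in the abstract presumably reflects technical
subtleties at the boundary of this canonical-extension construction in
certain edge cases, which would remain to be checked separately.
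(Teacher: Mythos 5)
The paper does not actually prove this statement: it appears as Conjecture \ref{conj:bh}, with the proof explicitly deferred to the work in progress \cite{Gam-Smith}, and the theorem this paper does prove (Theorem \ref{thm:main-intro}) concerns the Milnor fiber $V=W^{-1}(n)$ rather than the Landau--Ginzburg sector $(\CC^n,W)$. The strategy the paper sketches for the conjecture is also different from yours: it restricts the \emph{total space} of the LG model to the torus, uses the covering map \eqref{eq:in-cover} to obtain $\cW((\CC^\times)^n,W)\cong\Coh(\CC^n/\Gv)$, and then deforms this category by a count of holomorphic disks passing through the divisor $D=\{x_1\cdots x_n=0\}$; the disk count is identified with $W^\vee$, yielding $\MF(\CC^n/\Gv,W^\vee)$ in a single deformation step. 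Your plan instead detours through the Milnor fiber $M=W^{-1}(1)$ and its very affine part $M^\circ$, which is the route the paper reserves for Theorem \ref{thm:main-intro}, not for Conjecture \ref{conj:bh}.

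That detour has two concrete problems. First, the mirror of $\cW(M^\circ)$ is $\Coh(\partial\CC^n/\Gv)$, coherent sheaves on the toric \emph{boundary}, not on the toric stack $\CC^n/\Gv$ itself; it is the LG category $\cW((\CC^\times)^n,W)$, not the fiber category, that is mirror to $\Coh(\CC^n/\Gv)$. Consequently the schober-extension deformation from $\cW(M^\circ)$ to $\cW(M)$ lands in $\MF^{\Gv}(\CC\times\CC^n,tx_1\cdots x_n+W^\vee)$, and you would still need a further argument to reassemble $\cW(\CC^n,W)$ from this. Second, ``the Fukaya category of $M$ together with the monodromy endofunctor'' is not enough data for that reassembly: the cup--cap adjunction is controlled by the monad $\cap\cup=\Cone(\mu^{-1}\xrightarrow{s}\id)$, and the natural transformation $s$ (the disk/section count) is exactly where $W^\vee$ enters; one also needs conservativity of the cap functor, or a substitute as in Lemma \ref{lem:conservative}, for the monadic reconstruction to go through. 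These are precisely the ingredients the paper isolates in Conjecture \ref{conj:main}, so your outline is not wrong in spirit, but the step you describe as ``after incorporating the monodromy'' conceals the entire content of the deformation, and the overall route is longer than the one the paper intends.
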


This theorem ought to be proved\footnote{The approach described here draws on unpublished work with Jack Smith.} using the now-familiar strategy in mirror symmetry, pioneered by Seidel \cite{Sei-quartic,Sei-genus2} and systematized by Sheridan \cite{Sher1,Sher2,Sher3}, of studying the complement of a divisor on the A-side, proving mirror symmetry there and then deforming the resulting category by a count of holomorphic disks. In this case, this means beginning with the equivalence
\begin{equation}
  \cW((\CC^\times)^n, W)\cong \Coh(\CC^n/\Gv)
  \label{eq:fund-undef}
\end{equation}
and then finding $W^\vee$ as a count of disks passing through the divisor 
\[D=\{x_1\cdots x_n =0\}.\]

The fundamental fact which allows this trick to work is the observation that any invertible polynomial $W$ can be obtained from the basic case $W=x_1+\cdots+x_n$ by pulling back along the map
\begin{align}\label{eq:in-cover}
  \rho:\CC^n\to\CC^n, && 
  (x_1,\ldots,x_n)\mapsto \left( \prod x_j^{a_{1j}},\ldots,\prod x_j^{a_{nj}} \right),
\end{align}
which is a finite unramified cover away from the divisor $D$. The restriction of this map to $(\CC^\times)^n$ is thus a finite unramified cover everywhere, and by definition the group $\G$ is its Galois group. The general case can be proven through an understanding of this much simpler case.

We use the same fact in this paper, where our concern is not the ``singular category'' $\cW(\CC^n,W)$ but rather the ``nearby category,'' the wrapped Fukaya category of the Milnor fiber of $\CC.$ The study of this category was initiated in \cite{LU1}, and it has been described via explicit calculations in some special cases: for ADE singularities in \cite{LU2}, and for the 2-dimensional case in \cite{Hab} and \cite{CCJ}. As in \cite{Gam-Smith}, our strategy of proof in this paper allows us to treat all invertible polynomials in a uniform way.

Let $V=W^{-1}(n)$ be the Milnor fiber of $W$ in $\CC^n.$ (Choosing the fiber over $n$ as a representative Milnor fiber is an irrelevant normalization we find helpful.) We study $V$ through its ``very affine'' part $\Vc = W^{-1}(n)\cap(\CC^\times)^n.$ In analogy with the Landau-Ginzburg case, we begin with a mirror symmetry isomorphism
\begin{equation}\label{eq:VA}
\cW(\Vc)\cong \Coh(\partial\CC^n/\Gv)
\end{equation}
between the wrapped Fukaya category of $\Vc$ and the coherent sheaf category of the toric boundary 
\[\partial \CC^n/\Gv =\{z_1\cdots z_n=0\}/\Gv\]
of the toric stack $\CC^n/\Gv,$ as described in \cite{GS17,LP20}. As in the Landau-Ginzburg case, this isomorphism owes its existence to the cover \eqref{eq:in-cover}.

Finally, by understanding the geometry near the deleted divisor $V\cap \{x_1\cdots x_n=0\},$ we are able to represent $\cW(V)$ as a deformation of $\cW(\Vc)$. We state our result as conditional on Conjecture \ref{conj:main}: as we explain in \S\ref{subsec:conj}, this is an expected property of the Fukaya category, which in the setting at hand is possible to prove in many cases, but for which a complete account would be outside the scope of this paper.
\begin{theorem}\label{thm:main-intro}
	Assuming Conjecture \ref{conj:main}, the wrapped Fukaya category $\cW(V)$ is a deformation of $\Coh(\partial \CC^n/\Gv)$ by $W^\vee.$ In other words, there is an equivalence of 2-periodic dg-categories
	\begin{equation}\label{eq:thm-intro}
  \cW(V)\cong \MF^{\Gv}(\CC\times \CC^n, tx_1\cdots x_n + W^\vee).
\end{equation}
\end{theorem}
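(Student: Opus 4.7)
The plan is to follow the deformation-theoretic strategy outlined in \S\ref{sec:intro}, viewing $V$ as obtained from its very affine part $\Vc = V \cap (\CC^\times)^n$ by compactifying along the divisor $V \cap D$, where $D = \{x_1\cdots x_n = 0\}$. Starting from the undeformed mirror equivalence $\cW(\Vc) \cong \Coh(\partial\CC^n/\Gv)$ of \eqref{eq:VA}, one recovers $\cW(V)$ as a curved deformation whose curvature is a Maurer-Cartan element in Hochschild cochains, determined by the count of holomorphic disks with boundary on the A-side skeleton and passing through $V \cap D$. Under the mirror equivalence, Hochschild cohomology of $\Coh(\partial\CC^n/\Gv)$ is identified in the Hochschild-Kostant-Rosenberg spirit with functions on $\CC^n/\Gv$, and the theorem reduces to matching this disk-counting Maurer-Cartan element with the function $tx_1\cdots x_n + W^\vee$, which by the general formalism of matrix factorizations packages the right-hand side of \eqref{eq:thm-intro}.

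To carry out the identification, I would exploit the covering map \eqref{eq:in-cover} to reduce to the basic polynomial $W_0 = x_1 + \cdots + x_n$. The Milnor fiber $V_0$ of $W_0$ covers $V$ away from $D$ with Galois group $\G$, and on the mirror Pontryagin duality between $\G$ and $\Gv$ converts $\G$-equivariant descent on the A-side into $\Gv$-equivariance on the B-side, exactly as in the undeformed setting. The central calculation, advertised in the abstract as the construction of a canonical extension of a perverse schober, is then to extend Nadler's description of the skeleton of $\Vc_0 = \{y_1 + \cdots + y_n = n\} \cap (\CC^\times)^n$ across the stops at $V_0 \cap D$. In this basic setting each divisor component $\{y_i = 0\}$ contributes a single embedded disk whose monomial weight is $y_i$, assembling the self-mirror superpotential $y_1 + \cdots + y_n$.

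Transporting this computation back through the Galois cover $\rho$, a disk in $V$ passing through $\{x_i = 0\}$ lifts to disks in $V_0$ passing through the pulled-back divisor component $\{\prod_j y_j^{a_{ji}} = 0\}$, so the corresponding monomial on the $\Gv$-equivariant mirror is $\prod_j z_j^{a_{ji}}$; summing over $i$ produces $W^\vee = \sum_i \prod_j z_j^{a_{ji}}$. The extra term $tx_1\cdots x_n$ records a universal Novikov-type wrapping parameter tracking total intersection with $D$, capturing the Liouville completion from $\Vc$ to $V$; equivalently, it is the one simultaneous-intersection disk that cannot be detected in the very affine picture. Assembling the pieces recovers the 2-periodic curved dg-category $\MF^{\Gv}(\CC \times \CC^n, tx_1\cdots x_n + W^\vee)$ predicted by \eqref{eq:thm-intro}.

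The hard part is proving that the perverse schober extension described above is indeed \emph{canonical} in the appropriate sense, and that no further disk classes contribute to the deformation; this is precisely what Conjecture \ref{conj:main} encodes, and is why the theorem is stated conditionally on it. Beyond this abstract input, the remaining work is combinatorial: one must verify that the count of disks through each toric boundary stratum of $V$ reproduces the monomial predicted by the transpose exponent matrix $A^t$ and the $\G$-action, which follows from the Newton polytope structure of $W$ together with the local model for the skeleton near each component of $D \cap V$.
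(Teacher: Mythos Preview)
Your broad strategy---view $\cW(V)$ as a curved deformation of $\cW(\Vc)\cong\Coh(\partial\CC^n/\Gv)$, identify the deformation class via disk counts, and reduce to the basic case via the covering \eqref{eq:in-cover}---matches the paper's. However, there are two concrete errors. First, you have the covering backwards: the map $\rho$ of \eqref{eq:in-cover} presents $V$ as a $G$-cover of $P=V_0$, not the other way around. So disks in the basic Milnor fiber $P$ lift to disks in $V$, and the passage to $\Gv$-equivariance on the mirror comes from taking the $\Gv$-quotient, not from descent along an A-side quotient. Second, and more seriously, your account of the term $tx_1\cdots x_n$ is wrong. This term is not a disk count and has nothing to do with ``simultaneous intersection'' or Novikov parameters. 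It arises purely algebraically: by Orlov's equivalence (Lemma~\ref{lem:orlov-1}), the \emph{undeformed} category $\Coh(\partial\CC^n/\Gv)_{\ZZ/2}$ is already equivalent to $\MF^{\Gv}(\CC_t\times\CC^n, tx_1\cdots x_n)$, since $\partial\CC^n$ is cut out of $\CC^n$ by $x_1\cdots x_n$. The only genuine deformation coming from the A-side geometry is the addition of $W^\vee$.

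The paper also organizes the computation differently from your divisor-by-divisor picture. Rather than counting disks through each component $\{x_i=0\}$ separately, the paper studies $V$ via the single map $f=\prod_j x_j^{\sum_k a_{kj}}:V\to\CC$, which has one degenerate critical value at $0$. This induces a sectorial decomposition $V=V_L\cup_{V_{1/2}}V_R$, and the entire deformation is localized in the left-hand sector $V_L$. Conjecture~\ref{conj:main} is then the precise statement that $\cW(V_L)\cong\MF(\cW(\Vc_L),\tils)$, where $\tils$ is built from the natural transformation $s:\mu^{-1}\to\id$ underlying the cup-cap monad. The identification of $s$ with $W^\vee$ (Lemma~\ref{lem:generic}) is lifted from Nadler's calculation for the basic case along the $G$-cover $V\to P$; genericity of $s$ on $\partial\PP^{n-1}_{\Gv}$ forces it to be $W^\vee$ up to rescaling. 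Your ``combinatorial verification that the count of disks through each toric boundary stratum reproduces the monomial predicted by $A^t$'' is not how the argument actually runs; the monomials appear because $W^\vee$ is, up to scale, the unique generic section of $\cO_{\partial\PP_{\Gv}}(1)$.
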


The conjecture \ref{conj:main} on which the theorem is conditional describes a way of reconstructing a Fukaya-Seidel category $\cW(X,f),$ for a superpotential $f$ whose only critical value is 0, from its restriction to $f^{-1}(\CC\setminus \{0\})$ together with its cap functor. The reconstruction described by Conjecture \ref{conj:main} is known when none of the critical points of $f$ are ``at infinity'' in the fiber over zero, or when all of them are; the conjecture asserts that this holds for intermediate cases as well. We highlight here examples of the extreme cases where Theorem \ref{thm:main-intro} is true unconditionally.

\begin{example}
	Let $W(x_1,\ldots,x_n) = \sum x_i^{a_i}.$ Then the equivalence of Theorem \ref{thm:main-intro} is a straightforward generalization of the main calculation of \cite{Nad-cnw}, as we explain in Section \ref{sec:nxings}. This calculation was the main inspiration for the present paper, whose goal is to popularize the technique used in \cite{Nad-cnw}.
\end{example}

\begin{example}
	Let $W(x,y) = x^2y + y^2x.$ Then the Milnor fiber $V$ is equal to its very affine part $\Vc$; as a result, the deformation by $W^\vee$ described in Theorem \ref{thm:main-intro} is trivial, and the right-hand side of \eqref{eq:thm-intro} is equivalent to the right-hand side of \eqref{eq:VA}. In other words, the second term in the superpotential on the right-hand side of \eqref{eq:thm-intro} has no effect on the category.
\end{example}

\begin{remark}\label{rem:zero}
	The previous example highlights a subtle complication of \eqref{eq:thm-intro} as opposed to the traditional Berglund-H\"ubsch mirror statement \eqref{eq:bh-statement}: namely, the fact that the superpotential $W^\vee$ may be zero everywhere along the toric boundary of $\CC^n/\Gv$; in this case, as we have just seen, the mirror to the Milnor fiber $V$ receives no contribution from $W^\vee$, and the category on the right-hand side of Equation \eqref{eq:thm-intro} is just $\Coh(\partial\CC^n/G^\vee)$.
\end{remark}

\subsubsection*{Comparing with \cite{LU1}}
The motivation for Theorem \ref{thm:main-intro} was the paper \cite{LU1}, which conjectured a $\ZZ$-graded version of the theorem. Traditionally in the Berglund-H\"ubsch theory, one considers a ``maximal symmetry group'' for the quasihomogeneous polynomial $W^\vee$: namely,
\[
	\Gamma^\vee:=\{(t_1,\ldots,t_n,t_{n+1})\in (\CC^\times)^{n+1}\mid 
	W^\vee(t_1z_1,\ldots, t_nz_n) = t_{n+1}W^\vee(z_1,\ldots,z_n)\}.
\]
Observe that $\Gamma^\vee$ has a natural map to $\CC^\times,$ given by projection to the last factor, and the kernel of this map is the finite group $G^\vee$ discussed above.
The main conjecture of \cite{LU1} is the following:
\begin{conjecture}[\cite{LU1}]\label{conj:lu}
	There is an equivalence of dg categories
	\begin{equation}\label{eq:lu-conj}
	\cW(V)\cong\MF^{\Gamma^\vee}(\CC_{x_0}\times \CC^n, x_0x_1\cdots x_n + W^\vee).
\end{equation}
\end{conjecture}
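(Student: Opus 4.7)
The plan is to lift the proof of Theorem \ref{thm:main-intro} from the $2$-periodic (i.e., $\ZZ/2$-graded) setting to the $\ZZ$-graded setting by enlarging the equivariance from $G^\vee$ to $\Gamma^\vee$. The extension $1 \to G^\vee \to \Gamma^\vee \to \Gm \to 1$ exhibits $\Gamma^\vee$ as a finite cover of $\Gm$; the superpotential $x_0 x_1 \cdots x_n + W^\vee$ has pure weight under the resulting $\Gm$-action (with $x_0$ carrying the weight that balances $x_1 \cdots x_n$ against $W^\vee$, by construction of $\Gamma^\vee$), so $\MF^{\Gamma^\vee}(\CC_{x_0} \times \CC^n, x_0 x_1 \cdots x_n + W^\vee)$ is naturally $\ZZ$-graded and refines $\MF^{G^\vee}$. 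On the A-side, the $\ZZ$-grading on $\cW(V)$ comes from a natural choice of grading structure on the Milnor fiber $V$.

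I would proceed in two stages. First, upgrade the very affine mirror equivalence \eqref{eq:VA} to a $\ZZ$-graded statement: $\Coh(\partial\CC^n/\Gamma^\vee)$ may be identified with $\Gm$-equivariant coherent sheaves on $\partial\CC^n/\Gv$ and hence carries a natural $\ZZ$-grading, while the $\ZZ$-grading on $\cW(\Vc)$ comes from the exact Calabi-Yau structure on the very affine Milnor fiber. Revisiting the proof in \cite{GS17,LP20} with grading data tracked throughout should yield an equivalence $\cW(\Vc) \cong \Coh(\partial\CC^n/\Gamma^\vee)$ of $\ZZ$-graded categories. Second, repeat the deformation argument of Theorem \ref{thm:main-intro} in the $\Gm$-equivariant setting: because the deformation class $x_0 x_1 \cdots x_n + W^\vee$ has pure $\Gm$-weight, the deformation respects the enlarged equivariance, and the deformed category is exactly $\MF^{\Gamma^\vee}(\CC_{x_0} \times \CC^n, x_0 x_1 \cdots x_n + W^\vee)$.

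The main obstacle is matching gradings precisely: the $\ZZ$-grading on $\cW(V)$ depends on a choice of Lagrangian grading data, and for the equivalence to hold as $\ZZ$-graded rather than merely $\ZZ/2$-graded categories, one must verify that the gradings of the generating Lagrangians — in particular the linking disks to the components of $D = V \cap \{x_1 \cdots x_n = 0\}$, and the exact objects they deform into — match the $\Gm$-weights of their mirror coherent sheaves. This reduces to a Maslov-index computation in local models near $D$, whose combinatorics is dictated by the exponent matrix $A$. As with Theorem \ref{thm:main-intro}, the proposed proof would remain conditional on Conjecture \ref{conj:main}.
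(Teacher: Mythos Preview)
The paper does not prove Conjecture~\ref{conj:lu}. It is stated as a conjecture (attributed to \cite{LU1}), and the paper only establishes its $\ZZ/2$-graded shadow, Theorem~\ref{thm:main-intro}. Immediately after stating the conjecture, the paper explains that Theorem~\ref{thm:main-intro} is obtained from Conjecture~\ref{conj:lu} by forgetting the extra $\CC^\times$-action on the B-side and collapsing the Maslov grading on the A-side, and that to prove the conjecture on the nose one would need to equip $V$ with grading data making the induced $\ZZ$-grading on $\cW(V)$ match the $\ZZ$-grading on the matrix factorization side coming from the map $\Gamma^\vee\to\CC^\times$. The paper then says: ``This is likely not too difficult, but we omit it in this paper.''

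Your proposal is therefore not something to be compared against a proof in the paper, because there is none. That said, your outline is exactly the strategy the paper itself sketches as the route to the full conjecture: lift the $G^\vee$-equivariant argument to $\Gamma^\vee$-equivariance, which on the B-side supplies the $\ZZ$-grading, and on the A-side verify that a suitable choice of grading data on $V$ makes the equivalence $\ZZ$-graded rather than merely $\ZZ/2$-graded. Your identification of the main obstacle---matching the Maslov gradings of generating Lagrangians to the $\CC^\times$-weights of their mirror sheaves---is precisely the issue the paper flags and declines to address. So your proposal is a reasonable plan, consistent with the paper's own remarks, but you should be aware that you are proposing to fill a gap the paper explicitly leaves open, not reproducing an argument the paper gives.
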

Although non-equivariant matrix factorization categories are 2-periodic, the right-hand side of \eqref{eq:lu-conj} can be understood as a usual ($\ZZ$-graded, not 2-periodic) dg category using the extra grading coming from the homomorphism $\Gamma^\vee\to \CC^\times.$ (The trick of using an extra $\CC^\times$-action to grade the category of matrix factorizations goes back to \cite{or-lgcy}.) If we only care about the $\ZZ/2$-graded category, we may forget this $\CC^\times$-action, retaining only the information of the finite symmetry group $G$. We conclude that after 2-periodicization, the right-hand side of \eqref{eq:lu-conj} becomes equivalent to $\MF^{G^\vee}(\CC\times \CC^n, x_0x_1\cdots x_n + W^\vee),$ which is the right-hand side of Theorem \ref{thm:main-intro}.

In other words, our Theorem \ref{thm:main-intro} is the 2-periodic version of Conjecture \ref{conj:lu}, obtained by forgetting the extra $\CC^\times$-action on the B-side and collapsing the Maslov index on the A-side. In order to prove Conjecture \ref{conj:lu} on the nose, one would have to equip the Milnor fiber $V$ with symplectic grading data --- namely, a trivialization of the bundle $(\Lambda^{\on{top}}_\CC TV)^{\otimes 2},$ or equivalently a map $\on{LGr}(V)\to K(\ZZ,1)$ whose restriction to each fiber represents the Maslov class --- such that the induced $\ZZ$-grading on the Fukaya category matches the $\ZZ$-grading on $\MF^{G^\vee}(\CC\times \CC^n, x_0x_1\cdots x_n + W^\vee)$ coming from the extra $\CC^\times$ action. (See Appendix \ref{sec:appendix} for further remarks on grading data.) This is likely not too difficult, but we omit it in this paper, whose aim is to highlight some other constructions in symplectic geometry.
\subsubsection*{Calculating the deformation}
In the proof sketch outlined above, the first step, the calculation of the Fukaya category $\cW(\Vc)$ of the very affine part, has already been accomplished in \cite{GS17}. As often happens, the more difficult step consists in relating this category to the Fukaya category of the partial compactification $\cW(V).$ 

However, we observe that, unlike in the situations normally considered by Seidel and Sheridan, we are studying only a {\em partial} compactification, and the space $V$ is itself a Weinstein manifold, so its Fukaya category can be approached using the locality and other techniques developed in \cite{Nad-wms,GPS1,GPS2,GPS3,NS20}. We will be inspired in particular by the calculations of \cite{Nad-cnw}, as we will explain below.

Since the partially compactified manifold $V$ remains Weinstein, the relation of $V$ and $\Vc$ may be studied within a neighborhood of the deleted divisor%
\footnote{It may seem more natural to write $D = V\cap\{x_1\cdots x_n=0\},$ but since we study $V$ through the cover \eqref{eq:in-cover}, it will be important that $D$ be the central fiber of the degenerating family \eqref{eq:intro-tfam}.}
\[D:=V\setminus \Vc = V\cap \left\{\prod_j x_1^{a_{1j}}\cdots x_n^{a_{nj}} =0\right\}.\]
As we will explain below, this can be understood as a question about the symplectic geometry of the degenerating family
\begin{equation}
	V\cap \left\{\prod_j x_1^{a_{1j}}\cdots x_n^{a_{nj}} = t\right\},
	\label{eq:intro-tfam}
\end{equation}
as $t$ varies in a small disk around 0. The results of \cite{GS17} give a decomposition of a smooth fiber in this family into simpler Liouville sectors, and we can analyze degenerations of those sectors individually. We find that the total space of the degeneration of each such sector is a copy of the Liouville sector studied in \cite{Nad-cnw}. That paper calculated the category of microlocal sheaves on the Lagrangian skeleton of this sector, and the papers \cite{GPS3,NS20} now allow us to understand the results of \cite{Nad-cnw} as a calculation in the wrapped Fukaya category.

\subsubsection*{Perverse schobers}
%

The key tool in understanding the deformation of categories discussed above is the map
\begin{align}\label{eq:xn-map}
	f:V\to \CC, && (x_1,\ldots,x_n)\mapsto \prod_j x_1^{a_{1j}}\cdots x_n^{a_{nj}}.
\end{align}
This map can be described very simply: it has only two critical values, one nondegenerate critical value and one degenerate critical value at 0. The questions of deformation theory take place around the latter; we want to relate the Fukaya categories of $V$ and $\Vc = V \setminus f^{-1}(0).$



In other words, we are interested in the behavior of the symplectic geometry of $V$ relative to the holomorphic map $f,$ an approach again pioneered by Seidel \cite{Sei-book} in the case of Lefschetz fibrations; the non-Lefschetz behavior of $f$ near its critical value 0 requires more sophisticated categorical tools. At the same time, the fact that the base $\CC$ of our fibration $f$ is itself Stein helps to simplify much of our analysis, allowing us to work locally on this base.


The appropriate language for encoding Fukaya-categorical structures relative to a base is the theory of {\em perverse schobers}, as developed in \cite{KS1,KS2,BKS}. A perverse schober on $\CC$ with two singularities, at the points $0,1\in\CC,$ is equivalent to the data of a diagram of categories
\begin{equation}\label{eq:schob-C}
\xymatrix{
\cP_{[0,1)}\ar[r]^-F &\cP_{(0,1)} &\ar[l]_-G\cP_{(0,1]},
}
\end{equation}
where $F$ and $G$ are spherical functors.
The notation is intended to suggest that $\cP_{\frL}$ is the category which $\cP$ assigns to the Lagrangian $\frL\subset \CC.$

In our situation, we can produce such a datum by taking a Liouville-sectorial cover of the base by ``left'' and ``right'' sectors, each containing one critical value, and lifting this decomposition to the total space. In this case, the central category $\cP_{(0,1)}$ will be equivalent to the Fukaya category of a general fiber $f^{-1}(\frac12),$ and $F,G$ are the spherical ``cap'' or ``boundary restriction'' functors from the Fukaya categories of the two Liouville sectors stopped at $f^{-1}(\frac12).$ 

The Fukaya category $\cW(V)$ should then be calculated as the ``global (co)sections'' of this perverse schober, defined as the homotopy colimit
\begin{equation}
  \cW(V)\cong \cP_{[0,1]}:=\colim\left(
  \xymatrix{\cP_{[0,1)}&\ar[r]^-{G^L}\cP_{(0,1)} \ar[l]_-{F^L}&\cP_{(0,1]}}\right),
  \label{eq:schob-glob}
\end{equation}
where $F^L,G^L$ are the left adjoints of $F$ and $G$. (In the language of Fukaya categories, these are the ``cup functors''\footnote{In the literature, these are also sometimes referred to as ``Orlov functors''; in this paper, we will prefer the name ``cup functor'' for its descriptiveness and for the relation to its adjoint, the ``cap functor.''} corresponding to the inclusion of $f^{-1}(\frac12)$ as a stop in the left- or right-hand Liouville sectors.)

Similarly, the category $\cW(\Vc)$ is computable from a perverse schober $\cPc$ on $\CC^\times$:
\begin{equation}
 \cW(\Vc)\cong \cPc_{\frL_{\CC^\times}}:=\colim \left(
  \xymatrix{\cPc_{\lol}&\ar[r] \cPc_{(0,1)} \ar[l]&\cPc_{(\frac12,1]}}\right)
  \label{eq:schob-glob-cs}
\end{equation}
where the Lagrangian $\frL_{\CC^\times}$ is the union of a small circle around 0 and a spoke connecting the circle to the point 1, and the Lagrangian $\lol$ is the union of a small circle around 0 and a spoke emanating to the right (but not reaching the other critical value 1).

In fact, given a sectorial cover as stipulated above, the equivalences \eqref{eq:schob-glob} and \eqref{eq:schob-glob-cs} are immediate from the \cite{GPS2} theory of sectorial codescent.

Building on prior work \cite{GS17} on mirror symmetry for hypersurfaces in $(\CC^\times)^n,$ we can identify all of the pieces in the latter decomposition. Let $\widetilde{\partial\CC^n}/G^\vee$ be the strict transform of $\partial \CC^n/G^\vee$ under the blowup of $\CC^n/G^\vee$ at the origin, and $\PP_{G^\vee}$ the exceptional divisor of the blowup, which meets $\tpc/G^\vee$ in its toric boundary $\partial\PP_{G^\vee}$.
\begin{lemma}[Lemma \ref{lem:colim-undef} below]
  The diagram \eqref{eq:schob-glob-cs} is equivalent to a diagram of the form
  \begin{equation}
    \Coh(\partial\CC^n/\G^\vee)\cong\colim
		\left(\xymatrix{\Coh^{\G^\vee}(\widetilde{\partial \CC^n})&\ar[l]\Coh(\partial\PP_{G^\vee}) \ar[r]&\Coh^{\G^\vee}(\{0\})}\right).
\end{equation}
\end{lemma}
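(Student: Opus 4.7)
The plan is to identify each term and each functor of the diagram \eqref{eq:schob-glob-cs} with the stated coherent-sheaf data, working sector by sector in a cover of $\Vc$ lifted from $\CC^\times$ via $f$. I would cover $\CC^\times$ by a left Liouville sector carrying the stop $\lol$ (a small circle around $0$ together with a rightward spoke) and a right sector carrying a cotangent fiber over $1$, overlapping in an annulus surrounding the general fiber over $\tfrac12$. Pulling back along $f$ gives a sectorial cover of $\Vc$ for which \eqref{eq:schob-glob-cs} is exactly the codescent presentation of \cite{GPS2}.

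Two of the three identifications should be comparatively straightforward. The overlap retracts onto the general fiber $f^{-1}(\tfrac12)$, a smooth very affine hypersurface in $(\CC^\times)^n$; the main theorem of \cite{GS17} identifies its wrapped Fukaya category with $\Coh(\partial\PP_{G^\vee})$, the polytope being cut out by the Newton polytope of the defining equation after the $G^\vee$-quotient. The right sector, enclosing only the nondegenerate Morse critical value of $f$, is essentially a collection of Lefschetz thimbles, one for each of the $|G|$ critical points in the $G$-orbit above $1$; a standard Picard--Lefschetz calculation, with bookkeeping for the $G$-action on the critical locus, identifies the sector's wrapped Fukaya category with $\Coh^{G^\vee}(\{0\}) \cong \Rep(G^\vee)$, and identifies the corresponding cup functor with the pushforward $\Coh(\partial\PP_{G^\vee})\to\Coh^{G^\vee}(\{0\})$ along the constant map.

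The main obstacle is identifying the left-hand sector $\cPc_\lol$ with $\Coh^{G^\vee}(\widetilde{\partial\CC^n})$, which captures the non-Lefschetz behavior of $f$ at the degenerate critical value $0$. My strategy is to reduce to the basic polynomial $W = x_1+\cdots+x_n$ using the cover $\rho$ of \eqref{eq:in-cover}. In the basic case, $V$ is a linear hyperplane and the skeleton of the left sector matches (up to minor modifications) the one studied by \cite{Nad-cnw}; that paper's microlocal-sheaf computation, transferred to wrapped Fukaya categories via \cite{GPS3,NS20}, yields $\Coh(\widetilde{\partial\CC^n})$, the coherent-sheaf side of mirror symmetry for the blowup of $\CC^n$ at the origin. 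Transferring across the $G$-cover $\rho$ upgrades this to $\Coh^{G^\vee}(\widetilde{\partial\CC^n})$ by equivariance and identifies the remaining cup functor with pushforward along the exceptional-divisor inclusion $\partial\PP_{G^\vee}\hookrightarrow \widetilde{\partial\CC^n}/G^\vee$. With the three identifications in hand, the stated equivalence follows from \eqref{eq:VA} together with the codescent presentation \eqref{eq:schob-glob-cs}.
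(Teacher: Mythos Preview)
Your overall architecture is sound and matches the paper's: cover $\Vc$ by pulling back a sectorial cover of the base, then identify each piece and invoke codescent. Your treatment of the middle and right pieces is correct and essentially what the paper does (Proposition~\ref{prop:mir-mid} for $\cW(V_{\frac12})\cong\Coh(\partial\PP_{G^\vee})$, and the $|G|$ thimbles giving $\Coh^{G^\vee}(\{0\})$).

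The gap is in your identification of the left sector $\cPc_{\lol}$. You write that this sector ``captures the non-Lefschetz behavior of $f$ at the degenerate critical value $0$'' and that \cite{Nad-cnw} computes its category as $\Coh(\widetilde{\partial\CC^n})$. Both claims are off. The sector $\cPc_{\lol}$ is the \emph{punctured} left sector: it lives over $\CC^\times$, so the fiber over $0$ has been removed and $\fc|_{\Vc_L}$ has no critical values at all. What \cite{Nad-cnw} computes is the category of the \emph{non-punctured} sector $P_L\cong\partial^v\nsec_n$, and the answer there is $\Coh(H\cap\partial\PP^{n-1})$ for a generic hyperplane $H$ --- not $\Coh(\widetilde{\partial\CC^n})$. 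These are genuinely different categories; the first is the deformation of the second, which is the whole point of the paper's story.

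The paper's actual argument for $\cW(\Vc_L)\cong\Coh^{G^\vee}(\widetilde{\partial\CC^n})$ (Proposition~\ref{prop:pantscalc}) does not invoke \cite{Nad-cnw} directly. Since $\fc|_{\Vc_L}$ is a fibration over an annulus with one stop, $\Vc_L$ is a mapping torus of the fiber by the monodromy $\mu$ (Proposition~\ref{prop:leftskel-va}), so $\cW(\Vc_L)$ is the category of pairs $(X,\ \mu X\to X)$ with $X\in\Ind\cW(V_{\frac12})$. One then needs the identification $\mu\cong -\otimes\cO_{\partial\PP_{G^\vee}}(1)$ (Lemma~\ref{lem:mon-aut}; this step does borrow the monodromy computation from \cite{Nad-cnw}) and the algebro-geometric fact that $\widetilde{\partial\CC^n}=\Tot(\cO_{\partial\PP}(-1))=\underline{\Spec}_{\partial\PP}\Sym\,\cO(1)$, so that coherent sheaves on it are exactly such pairs. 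Your proposal is missing this mapping-torus/monodromy step, and without it the left-sector identification does not go through.
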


The right and middle pieces in the diagrams \eqref{eq:schob-glob} and \eqref{eq:schob-glob-cs} are the same; the only difference is in the left-hand category. 
We would like to relate the category $\cP_{[0,1)}$ to the category $\cPc_{\lol}$ appearing in the perverse schober on $\CC^\times.$

\subsubsection*{Extending a perverse schober}
In other words, we would like to understand how $\cPc,$ which is a perverse schober on $\CC^\times,$ extends to a perverse schober on $\CC.$ Our hope is that the local category $\cP_{[0,1)}$ on $\CC$ can be recovered from the local category $\cPc_{\lol}$ on $\CC^\times,$ together with a small amount of extra data.

As motivation, consider the situation which perverse schobers are expected to categorify, namely the theory of perverse sheaves. This theory admits a 
gluing formalism, described in \cite{Verdier85,MV86,Beilinson87}; in the case of perverse sheaves on $(\CC,0),$ this gluing result (which can be found in various forms in op. cit.) reads as follows:
\begin{theorem}\label{thm:glue-sheaves}
	A perverse sheaf on $\CC$ may be reconstructed from a perverse sheaf $\cF$ on $\CC^\times$, a monodromic perverse sheaf $\cG$ on the normal cone to 0, and an equivalence between $\cG$ and the specialization of $\cF$ at 0.
\end{theorem}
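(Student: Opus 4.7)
The plan is to prove this via the Beilinson gluing construction for perverse sheaves. In the forward direction, a perverse sheaf $\widetilde{\cF}$ on $\CC$ assigns $\cF := j^*\widetilde{\cF}$ on $\CC^\times$, the specialization $\cG := \on{Sp}_0\widetilde{\cF}$ on the normal cone $T_0\CC \cong \CC$, and a tautological equivalence $\alpha$ between $\cG$ and $\on{Sp}_0\cF$ over the punctured cone $T_0\CC \setminus \{0\}$. The substance of the theorem is the inverse: reconstructing $\widetilde{\cF}$ from such a triple $(\cF, \cG, \alpha)$.

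My first step would be to reformulate monodromic perverse sheaves on the normal cone. Using the $\Gm$-action on $T_0\CC$, such a sheaf $\cG$ is equivalent to a quiver $(\Psi_\cG, \Phi_\cG, \on{can}, \on{var})$ with $\on{var}\circ\on{can} = T - 1$, where $\Psi_\cG$ is the monodromic local system on $T_0\CC \setminus \{0\}$ and $\Phi_\cG$ is the stalk at $0$. Under this identification the specialization $\on{Sp}_0\cF$ corresponds to $\Psi\cF$ with no vanishing-cycles content, so $\alpha$ identifies $\Psi_\cG \simeq \Psi\cF$, and the remaining content of $\cG$ is the vanishing part $\Phi_\cG$ together with the two maps $\on{can}, \on{var}$ satisfying the monodromy relation.

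To construct the inverse, I would invoke Beilinson's maximal extension functor $\Xi: \Perv(\CC^\times) \to \Perv(\CC)$, built as a limit of $j_!(\cF \otimes L^a)$ for Kummer local systems $L^a$ as $a \to 0^+$. Its key property is a canonical short exact sequence $0 \to \Psi\cF \to \Xi\cF \to \Psi\cF \to 0$ whose Yoneda class realizes $T - 1$. Given the triple $(\cF, \cG, \alpha)$, the quiver data $(\Phi_\cG, \on{can}, \on{var})$ carves out a canonical subquotient of $\Xi\cF$: pull back along $\on{var}: \Phi_\cG \to \Psi\cF$ from the quotient copy of $\Psi\cF$ in $\Xi\cF$, and push out along $\on{can}: \Psi\cF \to \Phi_\cG$ from the sub-copy. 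The resulting object of $\Perv(\CC)$ is the desired extension $\widetilde{\cF}$.

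The main obstacle will be verifying that this procedure is a quasi-inverse to the forward functor. This amounts to a comparison with the standard recollement of $\Perv(\CC)$ along the stratification $\CC = \CC^\times \sqcup \{0\}$: the key technical point is that the monodromic specialization refines the naive gluing category to the category of quivers above, after which both essential surjectivity (every $\widetilde{\cF}$ is recovered as the claimed subquotient of $\Xi(j^*\widetilde{\cF})$) and full faithfulness (via a morphism chase through the recollement) are standard consequences of the results of \cite{Beilinson87}.
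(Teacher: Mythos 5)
The paper does not prove this statement: it is quoted purely as motivation, with the proof deferred to the cited references \cite{Verdier85,MV86,Beilinson87}, so there is no internal argument to compare against. Your sketch is a faithful reconstruction of the Beilinson form of the gluing theorem from one of those references --- translate the monodromic sheaf $\cG$ on the normal cone into quiver data $(\Psi_\cG,\Phi_\cG,\on{can},\on{var})$, then realize the glued object as a subquotient of $\Xi\cF\oplus\Phi_\cG$ --- and that route does work.

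One concrete misstatement should be fixed, though, because as written it is false and your construction leans on the corrected version. There is no short exact sequence $0\to\Psi\cF\to\Xi\cF\to\Psi\cF\to 0$: the maximal extension $\Xi\cF$ restricts to $\cF$ on $\CC^\times$, so it cannot be an extension of two objects supported at the origin. The actual structure is the pair of exact sequences $0\to j_!\cF\to\Xi\cF\to \Psi\cF\to 0$ and $0\to\Psi\cF\to\Xi\cF\to j_*\cF\to 0$, for which the composite $\Psi\cF\to\Xi\cF\to\Psi\cF$ of the sub-inclusion from the second sequence with the quotient projection from the first equals $1-T$ (up to sign convention); your ``pull back along $\on{var}$, push out along $\on{can}$'' subquotient is then Beilinson's middle cohomology of $\Psi\cF\to\Xi\cF\oplus\Phi_\cG\to\Psi\cF$, and the identity $\on{var}\circ\on{can}=T-1$ is exactly what makes that a complex. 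A second, smaller point: Beilinson's $\Xi$ and its exact sequences involve only the \emph{unipotent} nearby cycles, whereas the normal-cone formulation packages the full monodromic specialization; you should either note that the non-unipotent part extends uniquely (since $T-1$ is invertible there, $\on{can}$ and $\on{var}$ are forced) or use the character-decomposed version of $\Xi$. With those repairs the argument is standard and complete.
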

We will not literally categorify the data of Theorem \ref{thm:glue-sheaves}, but we will take it as a suggestion that there should be a finite amount of ``extension data'' which can be used to reconstruct $\cP$ from $\cPc.$

The fundamental ingredient which we will use in this extension procedure is the cap-cup adjunction
\begin{equation}\label{eq:cap-cup}
	F:\cP_{[0,1)}\rightleftarrows\cP_{(0,1)}: F^L.
\end{equation}
As we mentioned above, this adjunction is {\em spherical}: concretely, this means that the category $\cP_{(0,1)}$ is equipped with an automorphism $\mu\in\Aut(\cP_{(0,1)})$, which we can understand geometrically as induced by the monodromy around 0 of the map \eqref{eq:xn-map}, and the monad
$T:=FF^L\in \on{Alg}(\End(\cP_{(0,1)}))$ 
admits a presentation as the cone
\begin{equation}\label{eq:monad2}
	T \cong \on{Cone}(\xymatrix{\mu^{-1}\ar^-{s}[r]&\id_{\cW(\cP_{(0,1)})}})
\end{equation}
of a certain natural transformation $s$ between the clockwise monodromy automorphism $\mu^{-1}$ and the identity functor of the nearby category $\cP_{(0,1)}.$ 

In \S\ref{subsec:conj}, we will explain how the element $s$ can be used to produce an element of $HH^0(\cPc_{(0,1)})$, which we denote by $\tils$; after collapsing to a $\ZZ/2$-grading, $\tils$ can be considered as an even-degree Hochschild element, which can be understood as a deformation class.
The following represents our proposal for a categorified analogue of Theorem \ref{thm:glue-sheaves}.

\begin{conjecture}\label{conj:schober-ext}
	After collapsing to $\ZZ/2$-gradings, the category $\cP_{[0,1)}$ is a deformation of $\cPc_{\lol}$ with deformation class $\tils.$
\end{conjecture}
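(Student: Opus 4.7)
The plan is to exploit the parallel spherical adjunctions on both sides and reduce the statement to a comparison of monads on the common nearby category $\cP_{(0,1)}\cong\cPc_{(0,1)}$. Both $\cP_{[0,1)}$ and $\cPc_{\lol}$ come equipped with cap functors to this nearby category, so the task is to match up the resulting monads and translate the match into a statement about their module categories.

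On the $V$-side, sphericality gives the monad $T\cong\Cone(\mu^{-1}\xrightarrow{s}\id)$ recalled in \eqref{eq:monad2}, and $\cP_{[0,1)}$ is recovered as $T$-modules. I would set up the analogous picture on the $\Vc$-side: the cap functor from $\cPc_{\lol}$ is again spherical with the same cotwist $\mu$, but because the Lagrangian $\lol$ does not cap off the puncture at $0$, no holomorphic disk through a critical fiber contributes to the connecting map of the analogous cone, which therefore reduces to $T_0 = \Cone(\mu^{-1}\xrightarrow{0}\id) \cong \id\oplus\mu^{-1}[1]$, whose category of modules is $\cPc_{\lol}$. The two monads fit into a one-parameter family $T_r = \Cone(\mu^{-1}\xrightarrow{r\cdot s}\id)$ interpolating between $T_0$ at $r=0$ and $T$ at $r=1$, which after $\ZZ/2$-collapse presents $T$ as a formal deformation of $T_0$ with deformation parameter $s$. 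Translating through the standard $A_\infty$ deformation theory of monads converts this into a deformation of the module category $\cPc_{\lol}$ by a Hochschild class whose identification with $\tils$ is immediate from the construction of $\tils$ in \S\ref{subsec:conj}.

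The main obstacle is twofold. First, one must justify that both $\cP_{[0,1)}$ and $\cPc_{\lol}$ are genuinely recovered from their respective monads --- i.e., that the left adjoints of the cap functors are conservative --- which should follow from geometric compactness of the capping Lagrangians together with the partial wrapping at the stop $f^{-1}(\frac12)$. Second, one must verify that the connecting map of the monad on the $\Vc$-side really vanishes: this is a statement about the absence of certain holomorphic disks, plausible since $\lol$ avoids the puncture at $0$ but in need of a careful argument within the framework of \cite{GPS3,NS20}. Once both inputs are in hand, the identification of the resulting deformation class with $\tils$ becomes a matter of unwinding the sphericality formalism.
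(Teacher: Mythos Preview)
First, note that Conjecture~\ref{conj:schober-ext} is stated in the paper as a \emph{conjecture}, not a theorem; the paper does not give a complete proof. What the paper does provide (in \S\ref{subsec:conj}, as Lemma~\ref{lem:conservative}) is the conditional statement: if the cap functor $\cap:\cP_{[0,1)}\to\cP_{(0,1)}$ is conservative, then $\cP_{[0,1)}$ is the category of $T$-modules, and this category is identified \emph{directly} with $\MF(\cPc_{\lol},\tils)$ via a version of the Kn\"orrer/Orlov equivalence (Lemma~\ref{lem:knor-gen}). The paper explicitly flags the non-conservative case as open.

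Your approach starts from the same monadic idea on the $V$-side, but then adds a second monadic step which is where the real problem lies. You claim that the cap functor from $\cPc_{\lol}$ has monad $T_0=\Cone(\mu^{-1}\xrightarrow{0}\id)$ and that $\cPc_{\lol}$ is recovered as $T_0$-modules. The first part is fine, but the second is false: the cap functor from $\cPc_{\lol}$ is \emph{never} conservative. In the B-model picture $\cPc_{\lol}\cong\Coh(\Tot(\cO_{\ppp_\Gv}(-1)))$ with cap given by restriction $i^*$ to the zero section, any skyscraper at a point off the zero section is annihilated by $i^*$. Equivalently, in the description $\cPc_{\lol}=\{(X,\nu:\mu X\to X)\}$, any pair with $\nu$ an isomorphism is killed by $(X,\nu)\mapsto\Cone(\nu)$. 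So the category of $T_0$-modules is strictly smaller than $\cPc_{\lol}$ (it is the subcategory generated by the image of cup), and your interpolation $T_r$ only compares $T$-modules with $T_0$-modules, not with $\cPc_{\lol}$ itself. Your suggestion that conservativity ``should follow from geometric compactness of the capping Lagrangians'' does not hold here; geometrically, closed Lagrangians in $\Xc$ obtained by parallel-transporting a monodromy-invariant Lagrangian in $F$ around the circle of $\lol$ never reach the stop.

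The paper's partial argument avoids this trap: it does not try to present $\cPc_{\lol}$ monadically over $\cW(F)$. Instead, once $\cP_{[0,1)}$ is identified with $T$-modules (this is where $V$-side conservativity is used), the identification $T\text{-mod}\simeq\MF(\cPc_{\lol},\tils)$ is made by recognizing $T$-modules as coherent sheaves on the zero locus $\{s=0\}\subset\ppp_{\Gv}$ and invoking the twisted Orlov equivalence $\Coh(\{s=0\})_{\ZZ/2}\simeq\MF(\Tot(\cO(-1)),\tils)$. This sidesteps any need for monadicity on the $\Vc$-side. What remains genuinely open, both in your sketch and in the paper, is the $V$-side conservativity --- and as the paper notes (Remark~\ref{rem:zero} and the discussion following Lemma~\ref{lem:conservative}), this can fail when the LG model has ``critical points at infinity'' over $0$.
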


In \S\ref{subsec:conj}, we state a precise version of Conjecture \ref{conj:schober-ext} in the case where the perverse schobers $\cP,\cPc$ come from the Fukaya categories of Liouville sectors, and we explain why this statement is expected to follow from standard properties of the Fukaya category. As explained in \cite[\S 1.3]{abouzaid-auroux}, in the case where $\cP$ is the perverse schober associated to the wrapped Fukaya category of a Landau-Ginzburg model $(X,f),$ the section $s$ underlying the cup-cap adjunction is a count of holomorphic sections of $f$ over a disk containing 0. These holomorphic disks are supposed to provide the deformation described in Conjecture \ref{conj:schober-ext}.

\subsubsection*{The basic calculation}
As we will explain in more detail in \S\ref{sec:nxings}, the inspiration for the calculations in this paper is the perverse schober described in \cite{Nad-cnw}, associated to the Landau-Ginzburg model $f:\CC^n\to \CC$ given by $f(x_1,\ldots, x_n) = x_1\cdots x_n.$ In fact, the fiber $\{x_1\cdots x_n=1\}$ in that case is enriched with an extra stop (an instance of the ``completed LG triple'' construction we describe in Definition \ref{defn:cutoff-triple}), so that the Fukaya category of the fiber equipped with this new sectorial structure is equivalent to the category $\Coh(\PP^{n-1})$ of coherent sheaves on $\PP^{n-1}.$

In \cite{Nad-cnw}, it is shown that the monodromy automorphism $\mu$ of this category is the functor of tensoring with the line bundle $\cO_{\PP^{n-1}}(-1),$ and the disk-counting natural transformation $s$, which by definition is a map
\[
	\cO_{\PP^{n-1}}(-1)\to \cO_{\PP^{n-1}},
\]
can equivalently be understood as a generic section $s\in \Gamma(\PP^{n-1},\cO(1)),$ which after rescaling coefficients can be written as $s[z_0,\ldots,z_{n-1}] = z_0+\cdots + z_{n-1}$.

Moreover, in this case, as shown in \cite[\S 5]{Nad-cnw}, the cap functor --- which in this case we may understand as a functor $\cW(\CC^n, x_1\cdots x_n)\to \Coh(\PP^{n-1})$ --- is {\em conservative}, which is related to the fact that $\CC^n$ is contractible, and any interesting Lagrangian objects in $\cW(\CC^n, x_1\cdots x_n)$ must restrict to interesting Lagrangians on the boundary of this sector, and therefore the cup-cap adjunction is monadic. From these facts one deduces the main theorem of \cite{Nad-cnw}, the identification of $\cP_{[0,1)} = \cW(\CC^n, x_1\cdots x_n)$ with the category of coherent sheaves on the zero locus of the section $s.$

To relate this to the discussion above, we may observe (see Lemma \ref{lem:colim-undef} below) that the category $\cPc_{\lol}$ in this case is equivalent to the category $\Coh(\on{Tot}(\cO_{\PP^{n-1}}(-1)))$ of coherent sheaves on the total space of the line bundle $\cO_{\PP^{n-1}}(-1).$ The section $s$ can be understood as a function on this line bundle, defining a class $\tils\in HH^0(\on{Tot}(\cO_{\PP^{n-1}}(-1))).$ As we recall in Example \ref{ex:cohasdef}, a result of Orlov identifies the category of coherent sheaves on $\{s=0\}$ with the category of matrix factorizations of $\tils$ on 
$\on{Tot}(\cO_{\PP^{n-1}}(-1)).$
We may therefore rephrase the main theorem of \cite{Nad-cnw} as an instance of Conjecture \ref{conj:schober-ext}:
\begin{theorem}
	For the perverse schober $\cP$ studied in \cite{Nad-cnw}, the category $\cP_{[0,1)}$ is a deformation of the category $\cPc_{\lol}$ by $\tils.$ 
\end{theorem}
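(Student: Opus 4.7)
The plan is to string together several results already essentially contained in the excerpt. First I would invoke Lemma \ref{lem:colim-undef} to identify $\cPc_{\lol}$ with $\Coh(\Tot(\cO_{\PP^{n-1}}(-1)))$: on the lollipop $\lol$, the perverse schober $\cPc$ reduces to the monadic category of modules for the spherical automorphism $\mu^{-1}=\otimes\,\cO_{\PP^{n-1}}(1)$ on the nearby category $\Coh(\PP^{n-1})$, which is precisely coherent sheaves on the relative spectrum $\Spec_{\PP^{n-1}}\Sym(\cO(1))=\Tot(\cO(-1))$.

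Next, I would track the deformation class $\tils$ through this identification. By the recipe of Conjecture \ref{conj:schober-ext}, $\tils$ is built from the natural transformation $s:\mu^{-1}\to\id$ appearing in the cone presentation \eqref{eq:monad2}. In the present setting, as spelled out in the excerpt, this transformation is the map $\otimes\,\cO(-1)\to\id$ given by the global section $s=z_0+\cdots+z_{n-1}\in\Gamma(\PP^{n-1},\cO(1))$. Under the equivalence $\cPc_{\lol}\cong\Coh(\Tot(\cO(-1)))$, a section of $\cO(1)$ on $\PP^{n-1}$ is precisely a function on $\Tot(\cO(-1))$ which is linear along the fibers, so $\tils$ is identified with this linear function.

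Third, I would combine Orlov's theorem as recalled in Example \ref{ex:cohasdef} with Nadler's calculation to close the loop. Orlov produces the equivalence
\begin{equation}
\MF\bigl(\Tot(\cO_{\PP^{n-1}}(-1)),\tils\bigr)\cong\Coh(\{s=0\}\subset\PP^{n-1}),
\end{equation}
which exhibits the $\tils$-deformation of $\cPc_{\lol}$ as coherent sheaves on the hyperplane $\{s=0\}\cong\PP^{n-2}$. On the other hand, the main theorem of \cite{Nad-cnw}, as summarized in the excerpt, identifies $\cP_{[0,1)}=\cW(\CC^n,x_1\cdots x_n)$ with $\Coh(\{s=0\})$ via the monadic description afforded by conservativity of the cap functor. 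Composing the two identifications then exhibits $\cP_{[0,1)}$ as the deformation of $\cPc_{\lol}$ by $\tils$, as required.

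The step most in need of scrutiny is the compatibility between the abstract Hochschild class $\tils$ produced by the schober-theoretic recipe of Conjecture \ref{conj:schober-ext} and the concrete linear function on $\Tot(\cO(-1))$. In this case the compatibility is essentially forced: the monadic equivalence of \cite[\S 5]{Nad-cnw} describes $\cP_{[0,1)}$ as modules over the monad $T=\on{Cone}(\mu^{-1}\stackrel{s}{\to}\id)$, and by standard Koszul duality this is precisely the category of matrix factorizations of $\tils$ on the total space of $\cO(-1)$. So the main task reduces to checking that these two descriptions of ``$T$-modules'' agree, which is straightforward in this example but is the template for how Conjecture \ref{conj:schober-ext} should be verified in the general setting of the paper.
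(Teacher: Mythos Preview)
Your proposal is correct and follows essentially the same route as the paper: identify $\cPc_{\lol}\cong\Coh(\Tot(\cO_{\PP^{n-1}}(-1)))$ via the relative-$\Spec$ description (as in Proposition~\ref{prop:pantscalc}/Lemma~\ref{lem:colim-undef}), recognize $\tils$ as the fiberwise-linear extension of the generic section $s\in\Gamma(\PP^{n-1},\cO(1))$, and then combine Orlov's equivalence (Example~\ref{ex:cohasdef}) with the main theorem of \cite{Nad-cnw}. One small slip: you write $\mu^{-1}=\otimes\,\cO(1)$, but the paper's convention is $\mu^{-1}=\otimes\,\cO(-1)$ (so that $s:\mu^{-1}\to\id$ is a section of $\cO(1)$); your subsequent computation with $\Sym(\cO(1))$ is nonetheless consistent with the correct convention, so the argument stands.
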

We refer to \S\ref{sec:section4} for more details.

As we shall see, in this paper we shall encounter a calculation very similar to that in \cite{Nad-cnw}, but where the symplectic manifolds considered there have been replaced by $G$-covers; correspondingly, the mirror space $\PP^{n-1}$ considered above will be replaced by a $G^\vee$-quotient, a stack which we denote $\PP^{n-1}_\Gv.$ The natural transformation $s$ discussed above will now be a generic section of the line bundle%
\footnote{Technically for the main calculation of this paper we will consider not the line bundle $\cO(1)$ on $\PP^{n-1}_\Gv$ but its restriction to the toric boundary $\partial \PP^{n-1}_{\Gv}.$ As a result, the restriction of $s=W^\vee$ may be zero, in the situation described in Remark \ref{rem:zero}. In this case the cup-cap adjunction will no longer be monadic, but on the other hand the deformation by 0 is trivial, so monadicity is not required to understand it.}
$\cO_{\PP^{n-1}/\Gv}(1)$; up to scaling by $(\CC^\times)^n,$ $s$ is therefore the function $W^\vee$. We have now finally encountered the dual superpotential $W^\vee$ through mirror symmetry, thus fulfilling the purpose of this paper.

\subsubsection*{Future directions}
We have already suggested a natural framework for the deformation we consider, but we conclude with some further suggestions on how the ad hoc constructions of this paper might be encapsulated as part of a more systematic theory. 

The simplicity of the deformation theory involved in this paper is due largely to the fact that the coefficients of monomials in the deformation class $W^\vee$ are irrelevant, so long as they are nonzero -- and moreover, for grading reasons, these monomials are the only ones which can appear. (The same phenomenon occurs also in the similar situation treated in \cite{Gam-Smith}.)

This is in contrast to the situations usually considered by Seidel and Sheridan, where one is ultimately interested in the Fukaya category of a {\em compact} symplectic manifold. The situation here has been simplified considerably because the total space $V$ is Stein and the fiber $D=V\setminus \Vc$ we delete is also Stein. The coefficients in the deformation class should be sensitive to the symplectic area of the deleted divisor $D,$ but since $D$ is not compact and does not have a well-defined symplectic area, we are free to scale these coefficients however we like. It would be useful to have a more direct exposition of how rescalings of Liouville structure in such a situation affect the deformation class.

Also, we note that in the situation considered in this paper, none of the coefficients of $W^\vee$ was zero. This follows from an explicit check conducted in \cite{Nad-cnw}, but it would be more satsfying to have a general criterion for when this occurs. From the perspective of the base, this should fit into a more general theory of extensions of perverse schobers, categorifying the theory of extensions of perverse sheaves.

Finally, we note that although we were able to reproduce the data of a perverse schober as described in \eqref{eq:schob-C} using the theory of Liouville sectors, it would be more satisfying to see this structure directly at the level of skeleta: for a perverse schober $\cP$ obtained as pushforward of a Fukaya category along a map $f$ with Weinstein fibers, the category which $\cP$ assigns to a Lagrangian $\frL$ in the base should be the category associated to a certain lift $\widetilde{\frL}$ of $\frL$ to a Lagrangian in the total space. Heuristically, $\widetilde{\frL}$ is obtained from the Lagrangian skeleton $\LL$ of a fiber by parallel transport over $\frL,$ collapsing vanishing cycles in $\LL$ when $\frL$ meets a critical value. 
Such a theory can be implemented ``by hand'' in the case when $f$ is a Lefschetz fibration with base $\CC$; for more general singularities, or higher-dimensional bases (as considered for instance in \cite{GMW}), a good general theory does not yet exist and requires the development of a better understanding of how to lift Liouville structures.


%
%
%

\subsubsection*{Conventions}
Throughout this paper, we work with pretriangulated dg-%
categories over the field $\CC,$ in the homotopical context of derived Morita theory, as described in \cite{Toen-morita},
or equivalently stable $\CC$-linear $\infty$-categories.
In particular, by $\Coh(X)$ or $\cW(X)$ we always mean the corresponding pretriangulated dg category, and all limits, colimits, and equivalences among these should be understood in the appropriate homotopical sense. 

We will also want to collapse the $\ZZ$-grading on dg categories to a $\ZZ/2$-grading, or equivalently to work over the field $\CC((\beta))$ where $\beta$ is a degree-2 variable.
(See \cite[\S 5]{Dyck-compactgen} for the adaptation of the above homotopical formalism to the 2-periodic case.)
For a dg category $\cC,$ we write $\cC_{\ZZ/2}$ for the 2-periodic dg-category obtained from base change along the map
$\CC\to \CC((\beta)).$

To simplify calculations, throughout this paper the wrapped Fukaya category $\cW(X)$ of a symplectic manifold or Liouville sector will always be taken with 2-periodic coefficients. In Appendix \ref{sec:appendix} we recall the usual grading data used to define the Fukaya category and explain the simplifications which occur in the $\ZZ/2$-graded case.

{\em Notation:} In this paper, we will denote the Milnor fiber we study by $V$ and its ``very affine'' open subset by $\Vc.$ (In the basic case where $W(x_1,\ldots,x_n) = x_1 + \cdots + x_n,$ we denote these spaces instead by $P$ and $\Pc.$) We write $\partial\CC^n/\Gv$ for the toric boundary of the toric stack $\CC^n/\Gv$ (which, as we shall see, will be the mirror to $\Vc$). We will also be interested in the blowup $\tcn/\Gv$ of the stack $\CC^n/\Gv$ at 0, and its exceptional divisor, which we denote by $\pp_\Gv.$ We will also find it useful to use the presentation of this blowup as the total space of the line bundle $\cO_{\pp_\Gv}(-1).$ Finally, we will also study the toric boundary $\tpn/\Gv$ of the blowup,  which we will think of as the total space of the restricted line bundle $\cO_{\ppp_\Gv}(-1).$
\subsubsection*{Acknowledgements}
I am grateful to Jack Smith for his collaboration on the shared work \cite{Gam-Smith}, of which this paper is an offshoot. I would also like to thank David Nadler, John Pardon, Vivek Shende, and Nick Sheridan for helpful conversations and advice, and Maxim Jeffs for sharing his thoughts on splitting Liouville sectors over an LG base.
I also acknowledge the support of an NSF postdoctoral fellowship, DMS-2001897.
\section{Geometric and categorical background}
Here we collect some results from mirror symmetry, toric and tropical geometry which will be necessary for some of the calculations in this paper. \S \ref{sec:gps} is a review of wrapped Fukaya categories of Liouville sectors and their computation, and in \S \ref{sec:veryaffine} we recall some features of the symplectic geometry of hypersurfaces in $(\CC^\times)^n$ and their toric mirrors. Homological mirror symmetry equivalences for these spaces were established in \cite{GS17} using the tropical methods of Mikhalkin \cite{M}, and we will recall those tropical methods as well.


\subsection{Microlocal sheaf methods}\label{sec:gps}
We begin by reviewing some of the results of \cite{GPS1,GPS2,GPS3,NS20} on calculation of Weinstein Fukaya categories using microlocal sheaves. The key feature of Weinstein symplectic geometry which makes computations tractable is {\em locality}: unlike in compact symplectic geometry, the symplectic behavior of a Weinstein manifold can be reconstructed from an open cover by simpler pieces, the {\em (Weinstein) Liouville sectors} defined in \cite{GPS1}. These are Weinstein manifolds with boundary which are appropriate for Weinstein gluings along their shared boundary. We refer to \cite{GPS1} for details on the definition and technical properties of Weinstein sectors, and we will summarize here the ways in which Weinstein sectors arise for us.

\begin{definition}
	Let $(\oX,\omega=d\lambda)$ be a Liouville domain with boundary $\partial \oX$ and completion to a Liouville manifold $X$. We assume that $X$ has in addition a {\em Weinstein} structure, namely, a function $f:X\to \RR$ for which the Liouville flow is gradient-like. (We will never discuss non-Weinstein Liouville manifolds in this paper.)
  \begin{enumerate}
    \item Let $\overline{F}\subset\partial\oX$ be a real hypersurface with boundary such that $(\overline{F},\lambda)$ is itself a Weinstein domain. This is a {\em Weinstein pair} in the sense of \cite{Eliashberg-revisited}, and we denote by $(X,F)$ the Liouville sector obtained from $\oX$ by completing away from a standard neighborhood of $F.$
		\item Let $\Lambda\subset \partial\oX$ be a closed Legendrian (possibly singular, so that by ``Legendrian'' we mean that $\Lambda$ has a smooth Legendrian submanifold $\Lambda^\circ$ whose complement is of strictly lower dimension). This is a {\em stop} in the sense of \cite{Sylvan}, and we write $(X,\Lambda)$ for the Liouville sector obtained by completing away from a standard neighborhood of $\Lambda.$
		\item Let $f:X\to \CC$ be a Liouville Landau-Ginzburg model in the sense of \cite[Example 2.20]{GPS1}: namely, we require that one can choose defining Liouville domains $\oX$ for $X$ and $\overline{F}$ for a generic fiber $F=f^{-1}(z)$ such that $\overline{F}$ is contained in the contact boundary $\partial \oX$ of $\oX.$ (A construction of such $\overline{F}, \oX$ in the case where $f$ is a polynomial function on algebraic variety $X$ can be found in \cite[Proposition 1]{Jeffs}.) Then we write $(X,f)$ for the corresponding Liouville sector.
  \end{enumerate}
\end{definition}
The above constructions are all essentially equivalent; for instance, from the third, one can obtain the first by taking $F$ to be a general fiber of $f,$ and the second by taking $\Lambda$ to be the skeleton of $F$. Recall that the {\em skeleton} $\LL_X$ of a Liouville manifold $X$ is the set of points in $X$ which do not escape to infinity under the Liouville flow, and the skeleton (or {\em relative skeleton}) of a Liouville sector $(X,\Lambda)$ is the set of points which do not escape to the complement of $\Lambda$ under Liouville flow. In other words, the relative skeleton of $(X,\Lambda)$ is the union of the skeleton $\LL_X$ of $X$ with the cone (under Liouville flow) of $\Lambda.$

One way to understand the skeleton $\LL_X$ of a Weinstein manifold $X$ is as recording gluing data describing a cover of $X$ by simpler Weinstein sectors. If $(X_1,F_1)$ and $(X_2,F_2)$ are two Weinstein pairs equipped with an isomorphism $F_1\cong F_2,$ and we write $\Lambda\subset F$ for the skeleton of this Weinstein manifold, then the Weinstein gluing $X_1\cup_F X_2,$ for which local models can be found in \cite[\S 3.1]{Eliashberg-revisited} and \cite[\S 2.6]{AGEN3}, will have glued skeleton $\LL_{(X_1,F_1)}\cup_\Lambda\LL_{(X_2,F_2)}.$

One source of such gluing presentations is a splitting over the base of a Landau-Ginzburg model.
\begin{example}
	Let $f:X\to \CC$ be a Liouville Landau-Ginzburg model, and let $(a,b)\subset \RR$ be an interval so that the strip $H=\{z\in\CC\mid \Re(z)\in(a,b)\}$ does not contain any critical values of $f$ and such that the Liouville structure on the preimage $f^{-1}(H)$ is a product Liouville structure for the presentation $f^{-1}(H)=F\times T^*(a,b),$ where $F$ is a fiber of $f.$
	Then $X$ has a presentation as a pair of sectors $X_L$ and $X_R$ glued along sector $F\times T^*[a,b].$
\end{example}

A generalization of sectorial gluings allowing for higher-codimensional strata is contained in \cite[\S 9.3]{GPS2} in terms of {\em Liouville sectors with (sectorial) corners}: these sectors arise naturally when one tries to perform a variant of the above gluing construction where sectors are glued not along boundary Weinstein manifolds but rather along boundary Weinstein sectors.
Cornered Liouville sectors often arise naturally via the following construction.
\begin{definition}\label{defn:cornered-lg}
	Let $f_1,f_2:X\to \CC$ be a pair of Liouville Landau-Ginzburg models with the same underlying space $X,$ and 
	write $F_1,F_2$ for general fibers of the functions $f_1, f_2$ and $F_{12}$ for a general fiber of the function $(f_1,f_2):X\to \CC^2.$ 
	Assume that $\Crit(f_1|_{F_2})\subset \Crit(f_2),$ i.e., that restriction to $F_2$ does not introduce any new critical points on $f_1.$ Then we call $(X,f_1,f_2)$ an {\em LG triple}, and we associate to it the Liouville sectorial structure on $X$ with stop given by the glued Liouville sector $(F_1,f_2)\cup_{F_{12}}(F_2,f_1).$
\end{definition}

The reason for the asymmetry in the above definition is that we would like to think of $f_1:X\to \CC$ as the fundamental structure in an LG triple, where we have enhanced the fibers of $F_1$ with the sectorial structure given by $F_2.$ In general, functions $f_1,f_2$ may not satisfy the hypotheses of Definition \ref{defn:cornered-lg}, but that can be fixed by passing to a completion, as in the following construction.
\begin{definition}\label{defn:cutoff-triple}
	Let $f_1,f_2:X\to \CC$ be a pair of Liouville LG models on $X$. Let $\BB\subset \CC$ be a disk such that $\BB$ contains all critical values of $f_1,$ and all new critical values of $f_1|_{F_2}$ lie outside of $\BB.$ Then we write $\hatX:=f_1^{-1}(\BB)$ for the $f_1$-preimage of $\BB,$ and $(\hatX,f_1,f_2)$ for the resulting LG triple in the sense of Definition \ref{defn:cornered-lg}. We call $(\hatX, f_1,f_2)$ a {\em completed LG triple}.
\end{definition}

\begin{remark}
	LG triples (and generalizations with more superpotentials) play a fundamental role in the work \cite{abouzaid-auroux} of Abouzaid-Auroux on mirror symmetry for hypersurfaces in $(\CC^\times)^n.$ One may alternatively perform the cutoff construction of Definition \ref{defn:cutoff-triple} by replacing $f_2$ with $f_2^\epsilon:=\epsilon f_2$; as $\epsilon\to 0$, the new critical values of $f_1$ on the preimage $(f_2^\epsilon)^{-1}(p)$ of a fixed point $p$ in $\CC$ will move far from $0\in \CC.$ The constructions of Definitions \ref{defn:cornered-lg} and \ref{defn:cutoff-triple} are not strictly necessary for this paper, but they will be helpful in conceptualizing the relation of our constructions to the calculations of \cite{Nad-cnw}.
\end{remark}

The main achievement of \cite{GPS2} consists in using cornered Liouville sectors to establish a local-to-global principle for calculation of wrapped Fukaya categories of Liouville manifolds.
The wrapped Fukaya category $\cW(\cS)$ of a Liouville sector $\cS$ is defined in \cite{GPS1}. If the sectorial structure $\cS=(X,\Lambda)$ comes from a stop, this can be understood as a partially wrapped Fukaya category; if $\cS=(X,f)$ is a Landau-Ginzburg sector, then $\cW(\cS)$ can be understood as a Fukaya-Seidel type category. The main results of \cite{GPS1,GPS2} are the following:
\begin{theorem}[\cite{GPS1,GPS2}]\label{thm:codescent}
  The wrapped Fukaya category $\cW$ is covariant for inclusions of Liouville sectors: if $\cS'\hookrightarrow\cS$ is an inclusion of subsectors, then there is a functor $\cW(\cS')\to \cW(\cS).$ Moreover, $\cW$ satisfies codescent along sectorial covers: if $X$ is a Weinstein manifold or sector which admits a cover $X= U_1\cup\cdots\cup U_n$ by subsectors $U_i$, then the natural map
  \[
  \colim_{\emptyset\neq I\subset [n]}\cW\left( \bigcap_{i\in I}U_i \right)\to\cW(X)
  \]
  from the homotopy colimit of wrapped Fukaya categories of the $U_i$ to the wrapped Fukaya category of $X$ is an equivalence of dg categories.
\end{theorem}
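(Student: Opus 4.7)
The plan is to establish the functoriality first and then prove codescent by means of a localization principle for wrapped Floer theory on a sectorial cover. For functoriality: given an inclusion $\cS' \hookrightarrow \cS$ of Liouville sectors, an admissible Lagrangian $L \subset \cS'$ — conical at infinity and disjoint from the sectorial boundary — is also admissible in $\cS$. Quadratic-at-infinity wrapping Hamiltonians on $\cS'$ can be extended to wrapping Hamiltonians on $\cS$ agreeing with the original ones in a neighborhood of $L$, yielding a cofinal family that produces natural continuation maps from wrapped Floer complexes computed in $\cS'$ to those computed in $\cS$; standard homotopy arguments upgrade this to an $A_\infty$-functor $\cW(\cS') \to \cW(\cS)$.

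For codescent, one reduces to checking the statement on a generating set of objects. The key input is that the wrapped Fukaya category of a Weinstein sector is generated by the cocores of the critical points of a Weinstein Morse function together with the linking disks of its stop. Given a sectorial cover $X = U_1 \cup \cdots \cup U_n$, one first arranges by a sectorial perturbation that every critical point of a chosen Weinstein Morse function on $X$ lies in a single $U_i$, and similarly that the stop of $X$ is contained in appropriate intersections. Every generator of $\cW(X)$ then lifts to some $\cW(U_i)$, yielding essential surjectivity for the natural colimit functor. For fully faithfulness, one shows that after sufficient wrapping, the Floer complex computing morphisms between two such generators decomposes as a \v{C}ech-type complex whose terms are the Floer complexes of the same generators viewed inside the various intersections $\bigcap_{i \in I} U_i$.

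The main technical obstacle is controlling pseudoholomorphic disks near the sectorial boundaries. Because each sectorial boundary is modeled on $F \times T^*[0,1]$ and carries a compatible almost complex structure, an integrated maximum principle forbids holomorphic disks with boundary on admissible Lagrangians from crossing sectorial walls; this is precisely what localizes Floer theory to each $U_i$ and makes the \v{C}ech-type decomposition possible. Lifting this local-to-global principle from morphism spaces to an equivalence of $A_\infty$-categories requires constructing an explicit combinatorial model for the homotopy colimit — for instance, a diagram indexed by the nerve of the cover, equipped with a coherently compatible family of wrapping data — and verifying that the higher $A_\infty$-operations assemble coherently across overlaps. This coherent assembly, together with the argument that wrapping can be performed sectorially, is the main technical content of \cite{GPS2}.
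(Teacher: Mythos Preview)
The paper does not prove this theorem at all: it is stated as a result of \cite{GPS1,GPS2} and used as a black box throughout the rest of the paper. There is therefore no ``paper's own proof'' to compare your proposal against.

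That said, your sketch is a reasonable outline of the strategy actually used in \cite{GPS1,GPS2}. A few remarks on accuracy: the functoriality in \cite{GPS1} is set up not via quadratic Hamiltonians but via the localization model for wrapping (directed systems of positive isotopies), which makes the pushforward functor more or less tautological on objects and morphisms. For codescent, \cite{GPS2} does use generation by cocores and linking disks as you suggest, but the full-faithfulness argument is not literally a \v{C}ech decomposition of Floer complexes; rather, it proceeds via the wrapping exact triangle and a K\"unneth theorem, which together let one compute morphisms in $\cW(X)$ by successively removing stops and comparing with the pieces $U_i$. Your description of the maximum-principle confinement of holomorphic curves is correct in spirit and is indeed the mechanism underlying locality. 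So your proposal captures the main ideas, but the actual implementation in \cite{GPS2} is organized somewhat differently and is substantially more delicate than a sketch of this length can convey.
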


This theory heavily reduces the difficulty of computation of the wrapped Fukaya category $\cW(X)$ -- it remains only to compute the (hopefully much simpler) categories associated to subsectors $U_i.$ In this paper, we will barely need to do those calculations, since we will ultimately reduce to sectors $U_i$ whose Fukaya categories have already been calculated. And these calculations can often be performed simply using the language of microlocal sheaves, as developed in \cite{KS94}. In fact, we will find that the microlocal-sheaf-theoretic calculations of interest to us have already been performed in \cite{GS17, Nad-cnw}. We now recall the main theorems comparing these calculations to wrapped Fukaya categories.

\begin{theorem}[\cite{Sh-hp,NS20}]\label{thm:global-mush}
  Let $X$ be a stably polarized
  Weinstein sector with skeleton $\LL$. There exists a cosheaf of dg categories $\wmsh$ on $\LL.$ If $U\subset X$ is an open subset with an exact equivalence $U\cong T^*M$ for some manifold $M$, equipped with the cotangent fiber polarization, then $\wmsh$ agrees with the ``wrapped microlocal sheaves cosheaf'' defined in \cite{Nad-wms}.
\end{theorem}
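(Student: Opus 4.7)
The plan is to construct $\wmsh$ locally on $\LL$ using the cotangent-bundle model of \cite{Nad-wms} and then to glue by homotopy descent, with the stable polarization supplying the necessary gluing data.

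First I would cover $X$ by open Weinstein subsectors $\{U_\alpha\}$ each admitting an exact embedding $U_\alpha \hookrightarrow T^*M_\alpha$ with cotangent fiber polarization. Such a cover exists because, up to stabilization, a stable polarization is precisely a coherent choice of local Lagrangian--Grassmannian trivializations along $\LL$; after small Liouville-theoretic deformations and invoking the Weinstein neighborhood theorem (with arborealization, in the sense of Nadler and \'Alvarez-Gavela--Eliashberg, handling the singular strata of $\LL$), one promotes these to honest cotangent charts. Setting $W_\alpha := \LL \cap U_\alpha$, the construction of \cite{Nad-wms} produces on each $W_\alpha$ a cosheaf of dg categories $\wmsh_\alpha$ of wrapped microlocal sheaves on $M_\alpha$ supported along the image of $W_\alpha$.

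Second I would establish compatibility on overlaps. On $U_\alpha \cap U_\beta$ the two cotangent presentations differ by an exact symplectomorphism, which by the Kashiwara--Schapira--Guillermou theory of quantized Hamiltonian isotopies lifts to an equivalence of microlocal sheaf categories once compatible Maslov and grading data are fixed. The stable polarization is exactly what furnishes such data coherently along $\LL$, and in the 2-periodic setting used throughout this paper the residual obstruction collapses to a $\ZZ/2$ class which the polarization trivializes on the nose. Given these identifications, the cosheaf $\wmsh$ is assembled by homotopy descent along the \v{C}ech nerve of the cover. The claimed agreement with \cite{Nad-wms} on a globally cotangent open $U \cong T^*M$ is then immediate, since one may incorporate $U$ itself as a chart.

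The main obstacle is organizing these overlap equivalences as coherent higher-simplicial data, i.e.~producing a genuine cosheaf of dg categories rather than a system of pairwise compatible equivalences. This is the technical core of \cite{Sh-hp, NS20} and is handled by constructing $\wmsh$ not chart-by-chart but as the restriction to $\LL$ of the Kashiwara--Schapira stack of microlocal sheaves, with the wrapping (colimit) operation converting a sheaf of categories on a Legendrian into a cosheaf on the Weinstein skeleton. The remaining verifications---which in the cited papers occupy most of the work---are: compatibility with sectorial stops and corners of $X$, compatibility of the wrapping operation with restriction to cotangent charts (so that the recovered cosheaf really does match Nadler's), and functoriality of the quantization of exact symplectomorphisms at the level of higher coherences.
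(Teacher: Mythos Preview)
The paper does not give its own proof of this statement: it is quoted from \cite{Sh-hp,NS20}, with the only gloss on the argument appearing in Appendix~\ref{sec:appendix}. That gloss, however, makes clear that your chart-by-chart plan is not the route taken in the cited references, and that your final paragraph, while closer, still misses the key mechanism.

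Your first two paragraphs propose covering $X$ by local cotangent charts $U_\alpha\hookrightarrow T^*M_\alpha$ and gluing Nadler's cosheaves along overlaps; you then correctly diagnose that the higher coherences needed to promote pairwise equivalences to a genuine cosheaf are the crux, and that \cite{Sh-hp,NS20} do not proceed this way. But the actual construction is not ``restriction of the Kashiwara--Schapira stack'' in the abstract: as the Appendix explains, one chooses a \emph{single} high-codimension embedding of $X$ (equivalently, of $\LL$) into one cosphere bundle $S^*M$, and the stable polarization $\sigma$ is exactly the data needed to thicken $\LL$ to a Legendrian $\LL^\sigma\subset S^*M$. Microlocal sheaves are then defined on $\LL^\sigma$ using Kashiwara--Schapira theory in that one ambient $S^*M$, so the construction is global from the outset and no \v{C}ech-type coherence problem ever arises. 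The role of the polarization is thus not to supply Maslov data on overlaps of a cover, but to specify the normal-direction thickening inside a single ambient cotangent bundle. A minor further point: arborealization plays no role in \cite{Sh-hp,NS20}; the high-codimension embedding handles arbitrary Weinstein skeleta without any regularity hypothesis on $\LL$.
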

In Appendix \ref{sec:appendix}, we recall the polarization data necessary to define $\wmsh$ and the wrapped Fukaya category, and then we explain why working with $\ZZ/2$-graded dg categories renders the precise choice of polarization data irrelevant; we will therefore mostly suppress discussion of polarization data in the remainder of the paper.
\begin{theorem}[{\cite[Theorem 1.4]{GPS3}}]\label{thm:gps-maincomparison}
	For $X$ a stably polarized Weinstein sector with skeleton $\LL$ as above, then there is an equivalence of categories $\wmsh(\LL)^{op}\cong \cW(X)$ between the opposite of the wrapped microlocal sheaves category on $\LL$ and the wrapped Fukaya category of $X$.
\end{theorem}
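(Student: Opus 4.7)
The plan is to construct an explicit comparison functor and then reduce the global statement to a local one via descent on both sides. On the Fukaya side, Theorem \ref{thm:codescent} gives us sectorial codescent for $\cW$, so it suffices to establish the equivalence on each piece of a sufficiently fine cover and check compatibility. On the sheaf side, $\wmsh$ is by construction a cosheaf of dg categories on $\LL$, as asserted in Theorem \ref{thm:global-mush}; this means I can compute $\wmsh(\LL)$ as a colimit over any open cover of $\LL$. So the first step is to choose a cover of $X$ by Weinstein subsectors $U_i \hookrightarrow X$ whose skeleta $\LL \cap U_i$ are either standard cotangent charts $T^*M_i$ or are subcritical enough to have vanishing contribution, using the fact that the skeleton of a Weinstein manifold admits a stratification by smooth Lagrangian pieces each of which has a standard cotangent bundle neighborhood on its open stratum.

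Next I would construct the comparison functor. The natural direction is to take a Lagrangian $L \in \cW(X)$ and produce its microlocalization along $\LL$: in a cotangent chart $T^*M \subset X$, this should be the wrapped microlocal sheaf $\mu_L$ on $M$ whose stalk at $m \in M$ records the wrapped Floer complex $CW^*(L, T^*_m M)$ against the cotangent fiber. The assertion of Theorem \ref{thm:gps-maincomparison} in the case $X = T^*M$ — that $\cW(T^*M)^{\op} \simeq \wmsh(M)$ — is the wrapped Nadler–Zaslow theorem, essentially recalled as the local model in Theorem \ref{thm:global-mush} and established in the forms we need in \cite{Nad-wms,NS20,GPS3}. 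The opposite category appears because the Yoneda module produced by Floer-theoretic pairing is contravariant.

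With these ingredients in hand, the proof reduces to checking that the local Nadler–Zaslow-type equivalences are compatible with the restriction maps on both sides. On the Fukaya side, restriction is the sectorial inclusion functor $\cW(U_{ij}) \to \cW(U_i)$ of Theorem \ref{thm:codescent}; on the microlocal side, it is the cosheaf corestriction $\wmsh(\LL \cap U_{ij}) \to \wmsh(\LL \cap U_i)$. Compatibility is exactly the statement that microlocalization is functorial for sectorial inclusions, which follows from the geometric description of the comparison (stop removal on the A-side corresponds to extending microlocal sheaves across the corresponding Legendrian on the B-side). Once these compatibilities are verified on the nose (or, more realistically, coherently up to the needed higher homotopies), one passes to colimits to conclude the global equivalence.

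The main obstacle is precisely that last coherence check: assembling the local Nadler–Zaslow equivalences into a genuine equivalence of dg categories rather than a mere termwise equivalence requires controlling higher homotopies in the cosheaf/sectorial descent diagrams. Two subsidiary difficulties feed into this. First, the stable polarization data is needed to trivialize the obstruction to an absolute $\ZZ$-grading and to fix signs in the Floer differentials; in the $\ZZ/2$-graded setting of this paper the polarization becomes essentially irrelevant (see Appendix \ref{sec:appendix}), which is a genuine simplification. Second, the skeleton $\LL$ is singular, and one must arrange that the cotangent charts used in the cover cover not just the smooth locus but interact correctly at the singular strata; this is handled by working with sectorial corners as in \cite{GPS2} and choosing a cover adapted to the stratification of $\LL$, but the bookkeeping is what makes \cite{GPS3} a substantial paper rather than a formal corollary of the cotangent case.
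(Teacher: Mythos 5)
This statement is not proved in the paper at all: it is quoted verbatim from \cite[Theorem 1.4]{GPS3} as background, so there is no internal proof to compare against. Your outline does match the broad architecture of the actual proof in \cite{GPS3} --- descent on both sides (sectorial codescent for $\cW$, the cosheaf property for $\wmsh$) reducing to a local comparison of Nadler--Zaslow type, with the opposite category arising from the contravariance of the Floer--Yoneda pairing --- and you correctly flag that polarization data is what makes the local models comparable and that the coherence of the local equivalences is where the difficulty concentrates.

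That said, as a proof your proposal is a roadmap rather than an argument, and it understates where the substance lies. The construction of the comparison functor is not a soft consequence of the cotangent-bundle case: in \cite{GPS3} it requires (i) the generation of $\cW(X)$ by cocores and linking disks of the stop (a theorem of \cite{GPS2}, not a formality), (ii) the ``stop removal'' and doubling/antimicrolocalization machinery needed to define the functor on a partially wrapped category with a singular Legendrian stop, and (iii) the Nadler--Shende embedding of a stably polarized $X$ into a cosphere bundle $S^*M$ of possibly much larger dimension, which is how $\wmsh(\LL)$ is actually defined --- not by covering $\LL$ itself by cotangent charts of its smooth strata, as you suggest. Your proposed cover of the skeleton by charts $T^*M_i$ adapted to a Lagrangian stratification would run into exactly the singular-strata interactions you mention at the end, and resolving those is the content of the theorem rather than ``bookkeeping.'' None of this is a criticism of citing the result --- the paper does the same --- but the proposal should not be mistaken for a proof of it.
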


Finally, we highlight one feature of the functoriality for Liouville sectors which plays a major role in this paper. 
\begin{definition}
Let $\cS=(X,F)$ be a Weinstein Liouville sector, thought of as a Weinstein pair. The inclusion of the subsector $F\times T^*\RR$ into $\cS$ determines a functor
\begin{equation}\label{eq:or-fun}
\cup:\cW(F\times T^*\RR) = \cW(F)\to \cW(\cS),
\end{equation}
which is the {\em (Orlov) cup functor} associated to the pair $(X,F).$ Its right adjoint 
  \[
  \cap:\cW(\cS)\to \cW(F)
  \]
	s the \em{cap functor}.
\end{definition}

A general theory of the cup and cap functors can be found in \cite{Sylvan-orlov}. The most important feature of these functors is that they are {\em spherical}, with twist given by the monodromy automorphism. In other words, we have the following:
\begin{theorem}[\cite{Sylvan-orlov}]\label{thm:sylvan}
	Let $(X,F)$ be a Weinstein Liouville sector which can be presented as a Landau-Ginzburg model $(X,f),$\footnote{This hypothesis enforces ``swappability,'' a technical condition in \cite{Sylvan-orlov} which will be satisfied by all the Liouville sectors we consider.} and let $\mu^{-1}\in \Aut(\cW(F))$ be the automorphism of the wrapped Fukaya category of the fiber induced by clockwise monodromy in the base of $f.$ Then the monad $\cap\cup$ of the cup-cap adjunction admits a presentation
	\begin{equation}
		\cap\cup = \Cone(\mu^{-1}\xrightarrow{s} \id_{\cW(F)})
	\end{equation}
	as the cone on a natural transformation $s:\mu^{-1}\to \id_{\cW(F)}.$
\end{theorem}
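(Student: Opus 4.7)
The plan is to prove the theorem by constructing an explicit geometric model for $\cup$ and $\cap$ in the Landau-Ginzburg setting and computing the monad $\cap\cup$ directly through a wrapping argument. First, I would fix a fiber $F = f^{-1}(p)$ with $p\in\CC$ outside the disk containing the critical values of $f$, and describe $\cup L\subset X$ for $L\subset F$ as the Lagrangian obtained by parallel-transporting $L$ along a ray from $p$ to infinity, chosen to avoid critical values. The cap functor $\cap$ is defined as the right adjoint of $\cup$, so for any $L_0 \in \cW(F)$,
\[
\Hom_{\cW(F)}(L_0, \cap\cup L) \cong \Hom_{\cW(X,F)}(\cup L_0, \cup L),
\]
and the problem thus reduces to computing the right-hand side as a functor of $L_0$ and $L$.

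Next, I would evaluate this Hom-space via a wrapping computation adapted to the projection $f$. Choose a wrapping Hamiltonian on $X$ whose pushforward rotates the base $\CC$ positively at infinity; then wrapping $\cup L_0$ rotates its defining ray through increasing angular sectors, and its intersections with $\cup L$ organize themselves according to the homotopy class, in $\CC$ minus the critical values, of the region swept out. Because $\cW(X,F)$ is only partially wrapped---the stop $F$ prevents wrapping past the fiber direction---only two classes contribute: the trivial class, where the swept region encloses no critical values, yields an identity contribution, while the class in which the region encloses all critical values exactly once realizes the inverse monodromy functor $\mu^{-1}$, by the standard symplectic parallel-transport construction.

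The connecting natural transformation $s: \mu^{-1} \to \id_{\cW(F)}$ can then be identified with the continuation map interpolating between the two regimes; equivalently, as in \cite{abouzaid-auroux}, $s$ is the count of holomorphic sections of $f$ over a disk in $\CC$ enclosing all critical values. The cone presentation $\cap\cup \cong \Cone(\mu^{-1} \xrightarrow{s} \id_{\cW(F)})$ then follows from the resulting two-term filtration of the Floer complex by winding class.

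The hard part will be establishing naturality of this decomposition in both $L$ and $L_0$, so that the chain-level identification upgrades to an equivalence of functors, and independently ruling out any higher-winding contributions. This is the content of the swappability condition of \cite{Sylvan-orlov}, whose verification in the Landau-Ginzburg setting requires a careful analysis of the Liouville flow near the stop $F$ at infinity together with a choice of perturbation data compatible with the projection $f$.
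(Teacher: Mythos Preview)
The paper does not give its own proof of this theorem: it is quoted as a result from \cite{Sylvan-orlov} and used as a black box throughout. So there is no in-paper argument to compare your proposal against.

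That said, your sketch is a reasonable outline of how the result is established, and is close in spirit to Sylvan's argument. The geometric model for $\cup L$ as a parallel-transport Lagrangian over a ray, the computation of $\Hom_{\cW(X,F)}(\cup L_0,\cup L)$ by wrapping in the base and filtering by winding number, and the identification of the two graded pieces with $\id_{\cW(F)}$ and the monodromy autoequivalence are all correct ingredients. Your identification of $s$ with the continuation map (equivalently, the section count of \cite{abouzaid-auroux}) is also the right picture.

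The one place where your outline is slightly glib is the assertion that ``only two classes contribute'' because the stop blocks further wrapping. This is the heart of the matter, and making it precise is essentially the content of Sylvan's wrap-once formula: one must show that after a single positive pass around the stop the wrapped Lagrangian is cofinal, with no further contributions. This is exactly what the swappability hypothesis buys, and in the Landau-Ginzburg setting (as the paper's footnote notes) swappability is automatic because one can literally isotope the stop $F$ around the boundary circle of the base. You correctly flag this as the hard part; just be aware that it is not merely a technical verification of perturbation data but the substantive step of the argument.
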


\begin{example}\label{ex:orlov-triple}
	Let $(X,f_1,f_2)$ be the cornered Weinstein Landau-Ginzburg sector coming from an LG triple as in Definition \ref{defn:cornered-lg}. Then the wrapped Fukaya category $\cW(X,f_1,f_2)$ receives Orlov functors from the Fukaya categories of its boundaries, the Landau-Ginzburg sectors $(F_1, f_2)$ and $(F_2, f_1),$ and these in turn receive Orlov functors from the Fukaya category of their shared boundary, the Weinstein manifold $F_{12}.$ In other words, we have a square of spherical functors
	\[
		\xymatrix{
			\cW(F_{12})\ar[r]\ar[d]& \cW(F_1, f_2)\ar[d]\\
			\cW(F_2,f_1)\ar[r] & \cW(X,f_1,f_2).
		}
	\]
	Moreover, by the Weinstein codescent Theorem \ref{thm:codescent} and the sectorial gluing $\partial(X,f_1,f_2) = (F_1,f_2)\cup_{F_{12}}(F_2,f_1),$ we see that the homotopy pushout of the two functors with domain $\cW(F_{12})$ is the category $\cW(\partial (X,f_1,f_2)),$ the wrapped Fukaya category of the total boundary sector of $(X,f_1,f_2).$
\end{example}

\subsection{Very affine hypersurfaces}\label{sec:veryaffine}
We will also need some results from \cite{GS17} on skeleta of hypersurfaces in $(\CC^\times)^n.$ In fact, we will only be interested in two such hypersurfaces (along with their $\G$-covers): the pants, and the mirror to the boundary of projective space. We begin by recalling the abstract setup, 
and then we specialize to the cases of interest.

\begin{definition}
  Let $N$ be an $n$-dimensional lattice. To $N$ we associate the following spaces:
  \begin{itemize}
    \item The $n$-dimensional real vector space $N_\RR=N\otimes\RR$;
    \item The $n$-torus $N_{S^1} = N_\RR/N$;
    \item The cotangent bundle $T^*N_{S^1} = N_\RR/N\times N^\vee_\RR$ of this torus, whose projections to the base and fiber we denote by $\Arg$ and $\Log,$ respectively;
    \item The $n$-complex-dimensional split torus $N_{\CC^\times}=N\otimes\CC^\times.$
  \end{itemize}
      We also choose once and for all
      an inner product on $N$ in order to identify the cotangent bundle $T^*N_{S^1}$ with the complex torus $N_{\CC^\times}$ (which is more naturally the {\em tangent bundle} of $N_{S^1}$).
\end{definition}

An algebraic hypersurface in $N_{\CC^\times}$ determines a Newton polytope $\Delta\subset N_\RR^\vee,$ the convex hull of the monomials in a function defining $N_{\CC^\times}$ (under the standard identification of characters of $N_{\CC^\times}$ with lattice points in $N^\vee$), and we require that $0\in \Delta.$ We also choose a triangulation $\cT$ of this Newton polytope 
whose vertices are $\text{vertices}(\Delta)\cup\{0\}.$ 
For the examples in this paper, we will always take $\cT$ to be the triangulation whose simplices are the cones on the faces of $\Delta$ (and $0$).
We can thus interpret $\cT$ equivalently as the fan $\Sigma$ of cones on the faces of $\Delta.$

Inside the cotangent bundle $\TT,$ we specify a Lagrangian as follows:
\begin{equation}
  \LL_\Sigma:= \bigcup_{\sigma\in \Sigma} \overline{\sigma^\perp}\times \sigma\subset (N_\RR/N)\times N_\RR^\vee = T^*N_{S^1}.
  \label{eq:bondal-lag}
\end{equation}
The Lagrangian $\LL_\Sigma$ was first studied by Bondal in \cite{Bo}, then later studied extensively by Fang-Liu-Treumann-Zaslow \cite{FLTZ1,FLTZ2,FLTZ3}, for the relation between the category $\Sh_{-\LL_\Sigma}(N_{S^1})$ of constructible sheaves on $N_{S^1}$ microsupported along $-\LL_\Sigma$ and the category $\QCoh(\bT_\Sigma)$ of quasi-coherent sheaves on the toric stack $\bT_\Sigma$ with fan $\Sigma.$ Following the above works, a complete statement was first obtained in \cite{Ku} (followed by another proof in many cases in \cite{Zhou-ccc}). In modern language, the best statement reads as follows:
\begin{theorem}[\cite{Ku}]
  There is an equivalence \begin{equation}
    \wmsh(\LL_{\Sigma})\cong\Coh(\bT_\Sigma)
    \label{thm:ccc}
  \end{equation} between the dg category of wrapped microlocal sheaves on $\LL_\Sigma$ and the category of coherent sheaves on the toric variety $\bT_\Sigma.$ 
\end{theorem}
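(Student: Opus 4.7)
The plan is to prove the equivalence by combining sectorial codescent (Theorems \ref{thm:codescent} and \ref{thm:global-mush}) on the A-side with \v{C}ech descent along the standard affine cover $\{U_\sigma\}_{\sigma\in\Sigma}$ of the toric stack $\bT_\Sigma$ on the B-side. Concretely, I would regard $\LL_\Sigma\subset T^*N_{S^1}$ as the relative skeleton of the Weinstein sector obtained from $T^*N_{S^1}$ by installing the Legendrian stop at infinity determined by the fan $\Sigma$, and then cover this sector by subsectors $\cS_\sigma$ whose relative skeleta are open stars of the strata $\overline{\sigma^\perp}\times\sigma$. On intersections one has $\cS_\sigma\cap\cS_\tau=\cS_{\sigma\cap\tau}$, exactly matching the combinatorics of the affine toric cover.

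For the local identification $\wmsh(\cS_\sigma)\cong\Coh(U_\sigma)$, I would exploit the global product structure $T^*N_{S^1}=N_{S^1}\times N^\vee_\RR$: near a cone $\sigma$, the star neighborhood splits as a product of the cotangent bundle of the quotient torus $N_{S^1}/\sigma^\perp$ (whose $\wmsh$ recovers $\Coh$ of the dual split torus, by the classical computation of the wrapped Fukaya category of a cotangent bundle of a torus) and a Weinstein model for the affine toric variety $\Spec\CC[\sigma^\vee\cap N^\vee]$. The second factor I would build inductively by successively adding Legendrian stops corresponding to the rays of $\sigma$; each such stop addition on the A-side mirrors the toric compactification of a single $\CC^\times$-factor on the B-side, which reduces to the model case $\CC^\times\subset\CC$, where the microlocal sheaf calculation on a half-open interval recovers $\Perf(\CC[t])=\Coh(\CC)$.

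With the local pieces in place, gluing amounts to checking that the restriction functors $\wmsh(\cS_\sigma)\to\wmsh(\cS_{\sigma\cap\tau})$ induced by sectorial inclusion correspond to the restriction functors $\Coh(U_\sigma)\to\Coh(U_{\sigma\cap\tau})$ induced by the open inclusion of toric charts; the equivalence then follows from codescent on both sides. The main obstacle is exactly this compatibility: under the local equivalences, the structure sheaves and the group-algebra generators must transport correctly across every face inclusion $\tau\hookrightarrow\sigma$. I would reduce this to a rank-one check by factoring each face inclusion through successive codimension-one inclusions, where compatibility becomes the standard fact that the restriction $\Coh(\CC)\to\Coh(\CC^\times)$ corresponds on the microlocal side to forgetting a stop. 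Working in the $\ZZ/2$-graded setting, as the paper does throughout, eliminates the Maslov-class bookkeeping that would otherwise complicate the identification of line-bundle twists on both sides.
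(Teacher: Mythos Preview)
The paper does not give its own proof of this statement: it is stated as a theorem from \cite{Ku}, with the surrounding text crediting the line of work Bondal--FLTZ--Kuwagaki (and a later proof by Zhou) and rephrasing the result in the language of wrapped microlocal sheaves. There is therefore no argument in the paper to compare your proposal against.

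That said, a brief comment on your sketch. The strategy of matching a cover of the skeleton indexed by cones of $\Sigma$ to the affine toric cover $\{U_\sigma\}$ is indeed the spirit of the FLTZ/Kuwagaki proofs. But as written there is a variance mismatch you should be careful about. Theorem~\ref{thm:codescent} says $\cW$ (equivalently $\wmsh$) is \emph{covariant} for sectorial inclusions and computes the global category as a \emph{colimit}; the open affine cover $\{U_\sigma\}$ presents $\Coh(\bT_\Sigma)$ (via $\QCoh$) as a \emph{limit} along restriction maps. Your sentence ``the restriction functors $\wmsh(\cS_\sigma)\to\wmsh(\cS_{\sigma\cap\tau})$ induced by sectorial inclusion'' points the arrows in the wrong direction for the codescent framework you invoke. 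The usual fixes are either to pass to Ind-completions and use adjoints (as Kuwagaki effectively does, working with constructible sheaves and $\QCoh$), or to use instead the \emph{closed} cover of $\bT_\Sigma$ by toric orbit closures, which genuinely gives a colimit presentation of $\Coh$ and matches a sectorial cover on the A-side; this is the decomposition the present paper uses repeatedly (see Examples~\ref{ex:skel-an}--\ref{ex:pn-bdryskel} and the proofs in \S\ref{sec:main}). Either route can be made to work, but you should commit to one and keep the arrows consistent.
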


This equivalence was explained in \cite{GS17} by relating $\LL_\Sigma$ to the Landau-Ginzburg model $(N_{\CC^\times},W_{HV,\Sigma}),$ where the Hori-Vafa superpotential $W_{HV,\Sigma}$ traditionally understood as the mirror to the toric stack $\bT_\Sigma$ is a Laurent polynomial with Newton polytope $\Delta.$

\begin{theorem}[\cite{GS17}]\label{thm:gs-skel-1}
  Suppose that the fan $\Sigma$ is simplicial and that all generators of rays in $\Sigma$ lie on the boundary of the polytope $\Delta.$ Then the Lagrangian $\LL_\Sigma$ is the skeleton of the Landau-Ginzburg Liouville-sectorial structure defined on $N_{\CC^\times}$ by the function $W_{HV,\Sigma}:N_{\CC^\times}\to\CC.$
\end{theorem}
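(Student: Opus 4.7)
The plan is to identify $\LL_\Sigma$ with the relative skeleton of the Liouville sector $(N_{\CC^\times}, W_{HV,\Sigma})$ by analyzing the Liouville dynamics in a tropical limit. First I would equip $N_{\CC^\times}$ with the standard toric K\"ahler form, so that in $\Log$-coordinates the Liouville vector field is radial, and then modify this Liouville structure near $\{|W_{HV,\Sigma}|=c\}$ for some large $c$ via a cutoff interpolation, arranging that any point with $|W_{HV,\Sigma}|$ sufficiently large escapes along the LG direction of the sector. The relative skeleton is then the set of points that remain trapped under this modified flow, and the goal reduces to showing that this trapped set equals $\LL_\Sigma$.

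Next I would introduce a tropical rescaling of the coefficients of $W_{HV,\Sigma}$ and pass to the limit. After rescaling, $\log|W_{HV,\Sigma}|$ in logarithmic coordinates is governed by the piecewise-linear function $\max_i(\langle v_i,\cdot\rangle + \log|c_i|)$, whose corner locus is a tropical hypersurface $\Pi_\Sigma\subset N_\RR$ combinatorially dual to $\Sigma$. The simpliciality of $\Sigma$ together with the hypothesis that its rays lie on $\partial\Delta$ force $\Pi_\Sigma$ to carry the expected face lattice: one unbounded connected component $R_i\subset N_\RR\setminus\Pi_\Sigma$ for each ray $v_i$, with closed strata dual to cones of $\Sigma$, and no spurious branches coming from interior lattice points of $\Delta$. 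Over each open region $R_i$ the monomial $z^{v_i}$ dominates, so $|W_{HV,\Sigma}|\sim |z^{v_i}|$ and the modified flow pushes trajectories along $-v_i$ out to the sectorial boundary; these regions contribute nothing to the skeleton. Over a stratum $S\subset\Pi_\Sigma$ dual to a cone $\sigma\in\Sigma$, the balancing equations among the monomials $\{z^{v_i}\mid v_i\in\sigma\}$ pin $\Arg(z^{v_i})$ locally constant for every $v_i\in\sigma$, trapping the subtorus $\sigma^\perp\subset N_{S^1}$, while the remaining free $\Log$-direction is $\sigma$ itself. Gluing across all $\sigma\in\Sigma$ recovers $\bigcup_\sigma \overline{\sigma^\perp}\times\sigma = \LL_\Sigma$.

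The hard part is controlling the dynamics near the singular strata of $\LL_\Sigma$, where several cones of $\Sigma$ meet and the amoeba complement degenerates. Here the simpliciality of $\Sigma$ ensures that the balancing equations cut out smooth strata of the expected dimension, while the boundary hypothesis on the ray generators rules out extra branches that would otherwise add fragments to the skeleton; without these conditions, interior lattice points or non-simplicial combinatorics could inject additional cells into $\Pi_\Sigma$ that would have no counterpart on the Lagrangian side. A final compactness argument, using that the space of admissible all-nonzero coefficient vectors is path-connected, then promotes the tropical-limit identification to the statement for the original $W_{HV,\Sigma}$ via deformation-invariance of Liouville skeleta under compactly supported sectorial isotopies.
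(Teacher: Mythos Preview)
Your overall strategy---tropicalize, analyze the Liouville dynamics stratum by stratum over the tropical complex, and assemble the pieces into $\LL_\Sigma$---is close in spirit to the paper's, but the organization and the key technical inputs differ, and there are two genuine gaps.

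\textbf{Comparison with the paper.} The paper does not compute the relative skeleton of the LG sector directly. Instead it deduces the theorem from the companion statement (Theorem~\ref{thm:gs-skel}) that the Legendrian boundary $\Lambda_\Sigma=\LL_\Sigma^\infty$ is a skeleton of the \emph{hypersurface} $H_\Sigma=\{W_{HV,\Sigma}=0\}$, so that the relative skeleton of the sector is the zero-section torus (the $\sigma=0$ piece) together with the cone on $\Lambda_\Sigma$. The proof of that hypersurface statement uses Mikhalkin--Abouzaid ``tailoring'' (Proposition~\ref{prop:tailoring}) to produce a symplectic isotopy after which, near each face of the tropical complex dual to a cone $\sigma$, the hypersurface is \emph{literally} a product $(\CC^\times)^k\times \Pc_{n-1-k}$. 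This reduces the local skeleton computation to the single case of the pants, which is the calculation of \cite{Nad-wms}. The simpliciality and ray-on-boundary hypotheses are what make this reduction to standard-simplex pieces possible.

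\textbf{First gap: the ``balancing equations pin the arguments'' step.} Your assertion that over a stratum dual to $\sigma$ the equalities $|z^{v_i}|=|z^{v_j}|$ ``pin $\Arg(z^{v_i})$ locally constant'' is exactly the content of the local pants calculation, and it does not follow from the tropical balancing condition alone: balancing is a statement about moduli, not arguments. What actually forces the argument constraints is a Morse--Bott analysis of the Liouville flow on the local model, identifying the critical torus and its descending manifold. In the paper's approach this is outsourced to \cite{Nad-wms} after tailoring makes the local model exact; in your approach you would need either to invoke that computation or to carry it out, and as written you have done neither.

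\textbf{Second gap: ``deformation-invariance of Liouville skeleta.''} This is false as stated. Skeleta are not invariants of Liouville (or Weinstein) homotopy type; they depend on the Liouville form and can change discontinuously along a homotopy. What is true, and what the paper uses, is that the tailoring procedure is an honest symplectic isotopy of hypersurfaces, so one can \emph{transport} the Liouville structure from the tailored model back along the isotopy and thereby exhibit $\Lambda_\Sigma$ as the skeleton for \emph{some} Liouville structure on the original $H_\Sigma$. Your final step should be rephrased in those terms: you are not invoking invariance, you are pulling back a specific Liouville form along a specific isotopy. Without that correction, the passage from the tropical limit back to the original coefficients is unjustified.
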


Let $\Lambda_\Sigma = \LL_\Sigma^\infty$ be the Legendrian boundary of the conic Lagrangian $\LL_\Sigma.$
The above theorem follows from the following calculation, first performed for $W_{HV} = 1 + x_1+\cdots x_n$ in \cite{Nad-wms} and generalized to a global statement in \cite{GS17} (and partially expanded and corrected in \cite{Zhou-skel}):
\begin{theorem}[\cite{GS17,Zhou-skel}]\label{thm:gs-skel}
  With hypotheses as in \ref{thm:gs-skel-1}, the Legendrian $\Lambda_\Sigma$ is a skeleton for the hypersurface $H_\Sigma=\{W_{HV,\Sigma}=0\}$, which embeds as a Weinstein hypersurface inside the contact boundary of $T^*N_{S^1}.$
\end{theorem}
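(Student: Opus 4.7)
My plan is to establish the result through a tropical degeneration argument: realize $H_\Sigma$ symplectically as a family that concentrates, as a scaling parameter is sent to infinity, onto a PL model built directly from $\cT$, and then identify the skeleton of that model with $\Lambda_\Sigma$ by a local computation glued along the combinatorics of the fan.

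First I would rescale $W_{HV,\Sigma}$ by setting $W_{HV,\Sigma,t}:=\sum_\alpha c_\alpha\, t^{-\rho(\alpha)}\, z^\alpha$, where $\rho$ is the piecewise-linear function on $\Delta$ whose domains of linearity are the simplices of $\cT$. By Mikhalkin's theorem, as $t\to\infty$ the amoeba $\Log_t\bigl(\{W_{HV,\Sigma,t}=0\}\bigr)$ Hausdorff-converges to the tropical hypersurface $\Pi_\cT\subset N_\RR$ dual to $\cT$. The simpliciality of $\Sigma$ together with the hypothesis that the rays of $\Sigma$ lie on $\partial\Delta$ ensures that $\Pi_\cT$ is a smooth PL subcomplex and that $\{W_{HV,\Sigma,t}=0\}$ is smooth for $t$ large. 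Using the standard Liouville primitive on $T^*N_{S^1}$, I would check that the induced Liouville structure makes the hypersurface Weinstein and that, after a suitable identification, it sits inside the contact boundary $\partial_\infty T^*N_{S^1}$ as a Weinstein hypersurface in the sense of \cite{Eliashberg-revisited}; this amounts to producing a contactomorphism between the standard collar of the Weinstein hypersurface and a neighborhood in $\partial_\infty T^*N_{S^1}$.

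Next I would compute the skeleton locally. Near each vertex $v$ of $\Pi_\cT$ (dual to a top-dimensional simplex $\tau_v\in\cT$), the hypersurface is locally modeled on the standard pair of pants $\{1+z_1+\cdots+z_n=0\}$, whose skeleton in $T^*(S^1)^n$ was computed in \cite{Nad-wms} to be exactly $\bigcup_{\sigma\subset\tau_v}\overline{\sigma^\perp}\times\sigma$, i.e., the restriction of $\Lambda_\Sigma$ to the faces of $\tau_v$. Globally, I would use a Liouville-sectorial cover of $H_\Sigma$ indexed by the vertices (and faces) of $\Pi_\cT$, assemble the local pieces via sectorial codescent (Theorem \ref{thm:codescent}), and verify that the gluing produces precisely the Lagrangian $\LL_\Sigma$ of \eqref{eq:bondal-lag}; the combinatorial compatibility is automatic, since both the skeleton and $\LL_\Sigma$ are stratified by the cones of $\Sigma$ in the same way.

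The main obstacle is upgrading the topological/PL convergence of amoebas to a genuine symplectic equivalence: one must ensure that the Liouville structures on $\{W_{HV,\Sigma,t}=0\}$ converge, through compactly supported Liouville isotopies, to a Weinstein structure whose skeleton is exactly $\Lambda_\Sigma$, with no spurious enlargement of the skeleton near the "diagonal" regions of the amoeba where multiple monomials compete. This delicate point is precisely what is addressed (and partly corrected) in \cite{Zhou-skel}, where a careful PL model is constructed first and the smooth Weinstein structure is imported via an explicit Liouville isotopy.
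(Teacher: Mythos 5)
Your strategy --- tropical localization to local pants models, then a combinatorial gluing --- is the right one and is in the same spirit as the argument sketched in \S\ref{sec:trop}, but the implementation of the key analytic step differs from the one used in \cite{GS17}. You propose a Viro-type rescaling $W_{HV,\Sigma,t}=\sum_\alpha c_\alpha t^{-\rho(\alpha)}z^\alpha$ and a limit $t\to\infty$, which leaves you having to upgrade Hausdorff convergence of amoebas to convergence of Liouville structures; you correctly identify this as the main obstacle and defer it. The argument of \cite{GS17}, following \cite{M,A1}, instead avoids taking any limit: the ``tailoring'' isotopy of Proposition \ref{prop:tailoring} is a single compactly supported symplectic isotopy after which the hypersurface is \emph{exactly} a product $(\CC^\times)^k\times\Pc_{n-1-k}$ near each $k$-face of $\Trop(H)$, so the local identification with the Nadler pants model is on the nose and no convergence of Liouville forms needs to be controlled (the residual corrections near lower-dimensional strata are what \cite{Zhou-skel} repairs). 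The global assembly is also organized differently: rather than sectorial codescent over the faces of $\Pi_\cT$, one uses Lemma \ref{lem:posreal} to locate the distinguished bounded component $\cA_0$ and then exhibits the skeleton in one stroke as the union $\bigcup_{0\neq\sigma\in\Sigma} T_{F_\sigma}\times\Log^{-1}(\sigma\cap\cA_0)$ of downward Liouville flows of the Morse--Bott critical tori $T_{F_\sigma}$, which is manifestly $\Lambda_\Sigma$. Two smaller points: your local model formula $\bigcup_{\sigma\subset\tau_v}\overline{\sigma^\perp}\times\sigma$ is the skeleton of the ambient LG sector $((\CC^\times)^n,1+z_1+\cdots+z_n)$ from \cite{Nad-wms}; the skeleton of the pants itself is its Legendrian boundary, so you should be careful to take boundaries consistently when you glue. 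And the local models are needed along all faces of $\Trop(H)_0$, not only at vertices, since it is precisely the positive-codimension faces that contribute the tori $T_{F_\sigma}$ with $\dim T_{F_\sigma}>0$.
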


\subsubsection{Tropical geometry}\label{sec:trop}
  We will find it useful to recall an outline of the proof for the above theorem, although we refer to \cite{GS17} for details. The main ingredient is {\em tropical geometry}: we refer to \cite{M} for details but recall here that given a hypersurface $H\subset N_{\CC^\times}$ whose Newton polytope $\Delta$ is equipped with triangulation $\cT,$ one can define a PL complex $\Trop(H)\subset N_\RR^\vee,$ the {\em tropicalization} of $H,$ with the following properties:
  \begin{itemize}
    \item The {\em amoeba} $\Log(H)$ is ``near'' to $\Trop(H),$ in a sense to be discussed below.
    \item The complex $\Trop(H)$ is dual to the triangulation $\cT$; in particular, if the triangulation $\cT$ comes from a complete fan $\Sigma$, then the complement $N_\RR^\vee\setminus \Trop(H)$ will have only one bounded component.
    \item The components of the complement $N_\RR^\vee\setminus \Trop(H)$ correspond to monomials in a defining equation for $H,$ with the corresponence associating each region to the monomial dominating there. 
  \end{itemize}

  As we require that the triangulation $\cT= \Sigma$ be simplicial, we can reduce to the case of $\Delta=\Delta_{std}$ the basic simplex in $N_\RR^\vee,$ so that $H=\Pc$ is the $(n-1)$-dimensional pants, and the skeleton of this variety was calculated in \cite{Nad-wms}. The main technical tool involved in the calculation is a symplectic isotopy of $H$, which we call ``tailoring,'' described first in \cite{M} and studied in detail in \cite{A1}:

  \begin{proposition}[{\cite[\S 4]{A1}}]\label{prop:tailoring}
    There is a symplectic isotopy $\{H^s\}_{0\leq s\leq 1}$ given by
		\[ H^s = \left\{\sum_{\alpha\in \Delta\cap N^\vee}(1-s\phi_\alpha(\Log(z)))c_\alpha z^\alpha=0\right\},
    \]
    where $c_\alpha$ are constants and the function $\phi_\alpha:N_\RR^\vee\to \RR$ has the property that near a face $F$ of $\Trop(H),$ $\phi_\alpha\equiv 1$ unless the monomial $c_\alpha z^\alpha$ dominates in a region of $N_\RR^\vee\setminus \Trop(H)$ adjacent to $F$.
  \end{proposition}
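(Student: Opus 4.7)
The plan is to realize the isotopy as a family of smooth complex hypersurfaces, so that the symplectic condition comes for free from the ambient K\"ahler structure on $N_{\CC^\times}$, and the real work is to check that no $H^s$ develops a singularity. The key input is Mikhalkin's description of the complement $N_\RR^\vee \setminus \Trop(H)$: for a suitable rescaling of the coefficients $c_\alpha$ (the Mikhalkin tropical limit), this complement decomposes into connected regions $D_\beta$ indexed by the vertices $\beta$ of $\cT$, each characterized by the property that $|c_\beta z^\beta|$ strictly dominates every other $|c_\alpha z^\alpha|$ on $\Log^{-1}(D_\beta)$.

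First I would construct $\phi_\alpha : N_\RR^\vee \to [0,1]$ as a smooth cutoff vanishing on a neighborhood of $D_\alpha$ and equal to $1$ outside a slightly larger neighborhood, chosen small enough to still lie in the dominance region of $c_\alpha z^\alpha$. The required statement---that near a face $F$ of $\Trop(H)$ one has $\phi_\alpha \equiv 1$ unless $c_\alpha z^\alpha$ dominates in a region adjacent to $F$---is then a direct consequence of the support condition, since the only dominance regions meeting a neighborhood of $F$ are those adjacent to $F$. It is convenient to assemble the $\phi_\alpha$ from a single smooth partition of unity subordinate to an open thickening of the $D_\beta$, which enforces the compatibility needed in the next step.

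The central step is to verify that $f^s(z) = \sum_\alpha (1 - s\phi_\alpha(\Log z))\, c_\alpha z^\alpha$ cuts out a smooth hypersurface $H^s$ for every $s \in [0,1]$. On points $z$ with $\Log(z)$ deep inside a dominance region $D_\beta$ we have $(1-s\phi_\beta)=1$ while the other terms are exponentially smaller, so $f^s(z) \neq 0$ there. Near a face $F$ of $\Trop(H)$ only the monomials adjacent to $F$ are active, and $f^s$ then differs from $f^0$ only by replacing each such $c_\alpha$ by a positive smooth real factor $(1-s\phi_\alpha)\,c_\alpha$; the zero locus of a sum of comparable monomials with independent positive rescalings remains smooth, as one sees by reducing to a local model on the torus fiber over the face, where the problem amounts to smoothness of a generic affine combination of characters with nonzero coefficients.

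Once $\{H^s\}$ is a smooth family of complex hypersurfaces, each is symplectic for the K\"ahler form, and one upgrades the family to an ambient symplectic isotopy by integrating a vector field transverse to $H^s$ that generates the deformation, adjusted by Moser's argument to absorb the change in $\omega|_{H^s}$. The hard part is the transition zones near $\Trop(H)$ where several $\phi_\alpha$ are simultaneously non-constant: there smoothness can fail only if two or more active rescaled monomials cancel exactly, and one needs the coefficients $c_\alpha$ to lie in Mikhalkin's regime so that the competing terms remain generic enough to rule this out. Establishing this transversality estimate uniformly in $s$ is the main technical obstacle; everything else in the argument is formal consequence of the tropical combinatorics and the partition-of-unity construction.
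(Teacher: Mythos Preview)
The paper does not give its own proof of this proposition: it is stated as a citation of \cite[\S 4]{A1} and used as a black box in the skeleton calculation that follows. So there is nothing in the paper to compare against directly.

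Your outline is essentially the construction carried out in the cited reference (Abouzaid, building on Mikhalkin): build the cutoffs $\phi_\alpha$ from a partition of unity subordinate to thickenings of the dominance regions, verify that each $H^s$ is a smooth complex hypersurface, and then invoke Moser to promote the family of symplectic submanifolds to a symplectic isotopy. The identification of the required property of $\phi_\alpha$ with the support condition is correct, and your observation that only the adjacent monomials are ``active'' near a face $F$ is exactly the point of the construction.

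The one place your sketch is thin is the smoothness claim near a face $F$. You write that ``the zero locus of a sum of comparable monomials with independent positive rescalings remains smooth,'' reducing to a local model. This is the heart of the matter and is not quite as soft as you suggest: the rescaling factors $(1-s\phi_\alpha(\Log z))$ depend on $z$ through $\Log$, so they are not constants, and the resulting defining function is no longer a Laurent polynomial. One has to check that the extra terms coming from differentiating the cutoffs do not spoil transversality. In Abouzaid's argument this is handled by arranging that the cutoffs vary slowly relative to the tropical scale, so that these derivative terms are dominated by the honest monomial derivatives; this is where the ``sufficiently tropical'' hypothesis on the coefficients $c_\alpha$ enters in an essential way, and it is a genuine estimate rather than a transversality/genericity statement. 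Your last paragraph gestures at this but frames it as a genericity issue, which is not quite the right mechanism.
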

  In other words, near a $k$-face $F$ in $\Trop(H)$ dual to a standard $(n-k)$-simplex in $\cT$, the tailored hypersurface $H^1$ is equal to a product $(\CC^\times)^k\times \Pc_{n-1-k}$ of $(\CC^\times)^k$ with an $(n-1-k)$-dimensional pants. (If $F$ is dual to a larger $(n-k)$-simplex in $\cT,$ the second factor in this product will be replaced by an abelian cover.) This means in particular that sufficiently far in the interior of a $k$-face $F$, the amoeba $\cA:=\Log(H^1)$ of the tailored hypersurface agrees with $\Trop(H)$ in the directions tangent to $F$.

  Now the strategy of proof of Theorem \ref{thm:gs-skel} can be roughly understood as follows. Draw the fan $\Sigma$ superimposed on the amoeba $\cA$ of the tailored hypersurface $H^1,$ as illustrated in Figure \ref{fig:example-pc}.
  \begin{figure}[h]
    \begin{center}
      \includegraphics[width=6cm]{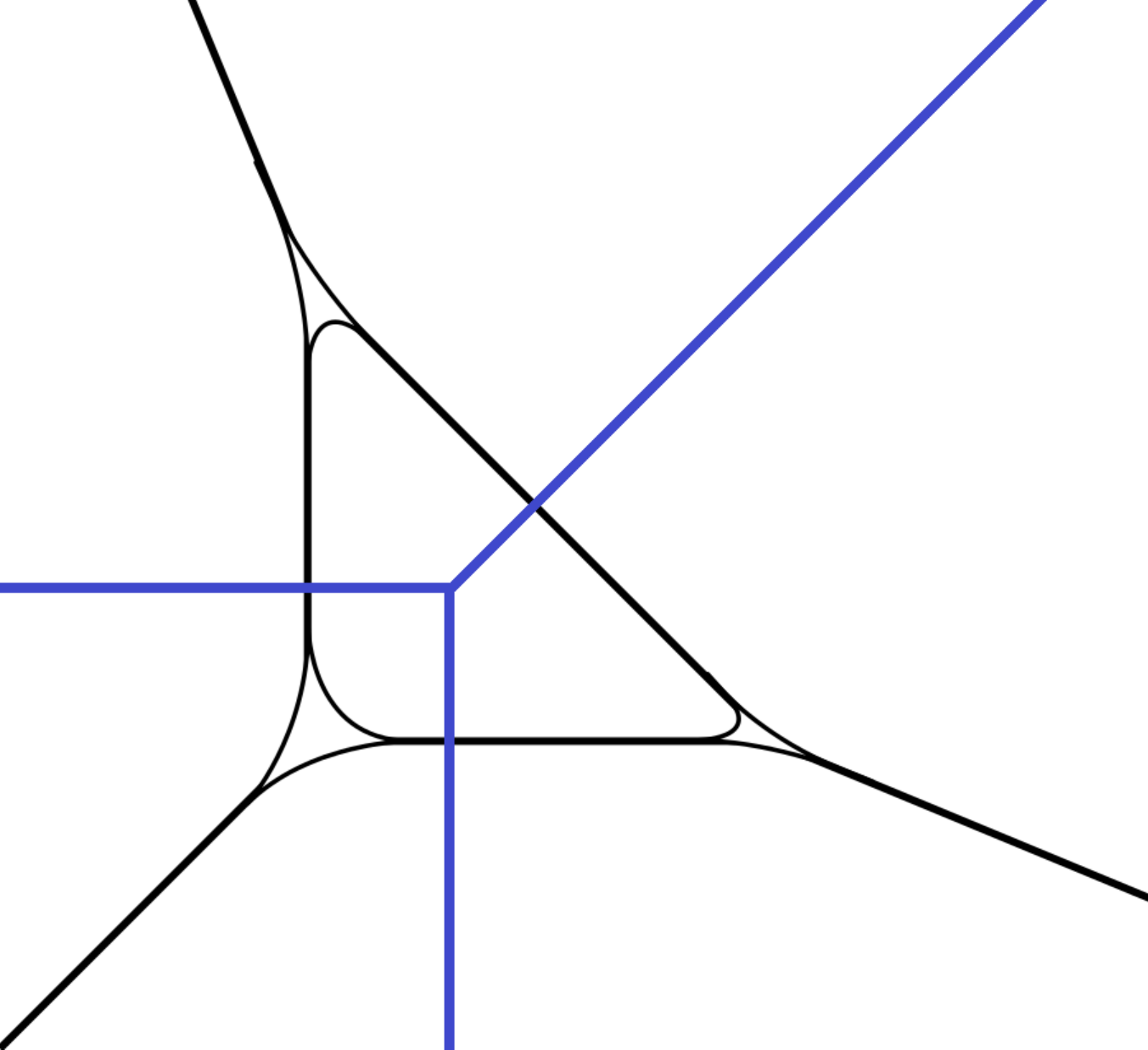}
    \end{center}
    \caption{The fan of $\PP^2$ superimposed on the tailored amoeba of its mirror hypersurface $H = \{x+y+\frac{1}{xy}=0\}.$ Note how this tailored amoeba is precisely ``tropical'' away from the vertices.}
    \label{fig:example-pc}
  \end{figure}
  As mentioned above, the complement of $\cA$ has a distinguished component, dual to the origin of $\Sigma,$ thought of as a vertex in the triangulation $\cT.$ We denote by $\cA_0$ the boundary of this region, and $\Trop(H)_0$ for the corresponding subcomplex of the tropical curve.
  \begin{lemma}[\cite{M}]
    Suppose the coefficients in the Laurent polynomial defining $M$ are real positive, except for the constant term which is real negative. Then the set
    \[
    H_+:=H\cap (\RR_{>0})^n
    \]
    of positive real points of $H$ map to $\cA_0$ under the $\Log$ map.
    \label{lem:posreal}
  \end{lemma}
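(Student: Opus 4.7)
The plan is to verify the containment pointwise: for $z \in H_+$, positivity of all terms in the equation $W(z)=0$ will force $\Log(z)$ to lie on the boundary of the distinguished amoeba-complementary region $R_0$ (the bounded component of $N_\RR^\vee \setminus \cA$ associated to the constant monomial).

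First I would write the defining polynomial as $W = -c_0 + \sum_{\alpha \in (\Delta \cap N^\vee) \setminus \{0\}} c_\alpha z^\alpha$ with $c_0, c_\alpha > 0$. For $z \in H_+$, all $z_i > 0$, so $z^\alpha = e^{\alpha \cdot x} > 0$ for $x := \Log(z)$, and $W(z) = 0$ rearranges to the real identity
\[
c_0 \;=\; \sum_{\alpha \neq 0} c_\alpha e^{\alpha \cdot x}.
\]
The auxiliary function $f(y) := \sum_{\alpha \neq 0} c_\alpha e^{\alpha \cdot y}$ is smooth and strictly convex, and I claim its sublevel set $\{f < c_0\}$ is precisely the distinguished region $R_0$: whenever $f(y) < c_0$, the triangle inequality yields $|W(z)| \geq c_0 - f(y) > 0$ for every $z$ with $\Log(z) = y$, so $y \notin \cA$; convexity shows this sublevel set is connected, and because $0$ lies in the interior of $\Delta$ the nonzero lattice points $\alpha$ positively span $N^\vee$, forcing $f(y) \to \infty$ in every direction so that $\{f < c_0\}$ is bounded. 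This matches the combinatorial description of the distinguished bounded component.

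The displayed identity then says $x$ lies on the boundary level set $\{f = c_0\} = \partial R_0$, using strict convexity of $f$ (so the sublevel set is an open strictly convex set whose topological boundary is exactly the level set). Combined with $x \in \cA$ (which holds since $z \in H$), this gives $x \in \partial R_0 \cap \cA = \cA_0$.

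The main point requiring care is matching the convexity-theoretic description of $R_0$ with the amoeba-theoretic one appearing in the paper, and reconciling $H$ with the tailored hypersurface $H^1$ used to define $\cA$: the former is a standard dictionary verification in amoeba theory, and the latter follows from the fact that the tailoring isotopy $\{H^s\}$ preserves the combinatorial shape of the amoeba (so $R_0$ is the same bounded region whether one uses $H$ or $H^1$), with $H_+$ mapping into $\cA_0$ in either case. Modulo these standard points, the argument is a direct convexity calculation.
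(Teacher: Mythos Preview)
The paper does not prove this lemma; it is stated with a citation to \cite{M} and used without further argument, so there is no in-paper proof to compare against.

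Your convexity argument is the standard one and is correct in its main steps. One small slip: you assert that ``$0$ lies in the interior of $\Delta$,'' but the paper only requires $0 \in \Delta$, and in the basic example of the pants (Newton polytope the standard simplex) $0$ is a vertex. Fortunately this is harmless: you invoke interiority only to conclude that $\{f < c_0\}$ is bounded, and boundedness plays no role in reaching the conclusion. What you actually need is that $\{f < c_0\}$ is connected (convexity gives this) and contained in the amoeba complement (your triangle-inequality step), hence lies in a single complementary component; identifying that component as the one indexed by the exponent $0$ is exactly the standard order-map/Ronkin-function dictionary you allude to at the end. With that in hand, $f(x)=c_0$ places $x=\Log(z)$ in $\overline{\{f<c_0\}}\subset\overline{R_0}$, and since $z\in H$ gives $x\in\cA$, you get $x\in\partial R_0=\cA_0$ as claimed.
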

  Now, one computes that for each $k$-face $F$ in the polytope $\Trop(H)_0$, there corresponds a single Morse-Bott critical point (indicated by the intersection of $F$ with its dual cone in $\Sigma$) with critical locus an $(n-k)$-torus $T_F$. The Lagrangian skeleton of $H$ can then be described as a union, over cones $\sigma$ in $\Sigma$ (dual to faces $F_\sigma\subset \Trop(H)_0$),
  \[
  \LL_H = \bigcup_{0\neq \sigma \in \Sigma} T_{F_\sigma}\times \Log^{-1}(\sigma\cap \cA_0),
  \]
  of the downward Liouville flows of these tori. This computation completes the proof of Theorem \ref{thm:gs-skel}.

  \begin{remark}\label{rem:skel-sphere}
  Observe that the positive real locus $H_+$ is a subset of $\LL_H,$ presented as a union of $(n-1)$-simplices corresponding to the top-dimensional cones $\sigma$ of $\Sigma.$ If the fan $\Sigma$ is complete, then $H_+$ will be a sphere $S$ (or a disjoint union of spheres if the fan $\Sigma$ is stacky).
  \end{remark}

  \subsubsection{Examples}
Now, to simplify notation, we choose a basis $N\cong \ZZ^n,$ and we begin to consider the cases of particular interest to us. The fundamental example is the following:
\begin{example}\label{ex:skel-an}
  Suppose that $\Sigma = \Sigma_{\AA^n}$ is the standard fan of affine space, with rays spanned by basis vectors $e_1,\ldots,e_n$ of $\ZZ^n.$ If $n=1,$ then $\LL_{\Sigma_{\AA^1}}$ is a ``circle with spoke attached'': the union of $S^1$ with a single conormal direction at $1\in S^1,$ as pictured on the left in Figure~\ref{fig:an-skels}. In general, $\LL_{\Sigma_{\AA^n}}=(\LL_{\Sigma_{\AA^1}})^n$; the case $n=2$ is pictured on the right in Figure~\ref{fig:an-skels}.

\begin{figure}[h]
  \begin{center}
    \includegraphics[scale=0.5]{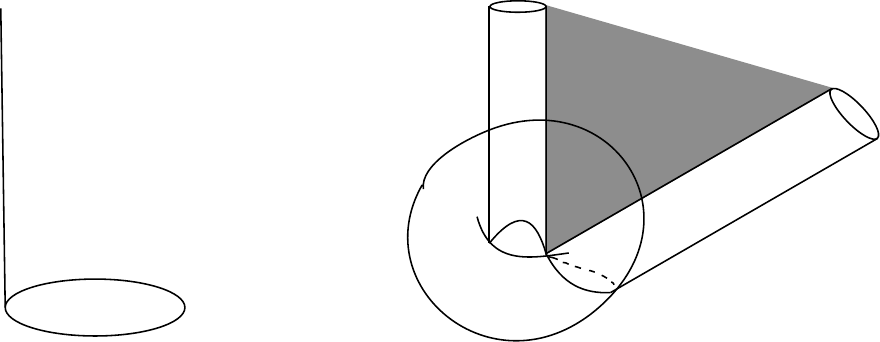}
  \end{center}
  \caption{The skeleta $\LL_{\Sigma_{\AA^n}}$ of the Liouville sectors mirror to $\AA^n$ for $n=1,2.$}
  \label{fig:an-skels}
\end{figure}
\end{example}

The sector $((\CC^\times)^n, 1+x_1+\cdots+x_n)$ described in \ref{ex:skel-an} is mirror to affine space $\CC^n,$ but we will be more interested in the boundary of this mirror symmetry.
\begin{example}\label{ex:skel-pants}
  The boundary $\partial \LL_{\Sigma_{\AA^n}}$ of the skeleton described above is a skeleton for the pants
  \begin{equation*}
		\Pc = \{x_1+\cdots + x_n = 1\}\subset (\CC^\times)^n,
  \end{equation*}
  which is mirror to the toric boundary $\partial \CC^n$ of affine space. This toric boundary is a union of closed pieces $\overline{\cO}_\sigma,$ where we write $\cO_\sigma$ for the toric orbit corresponding to a nonzero cone $\sigma$ in $\Sigma.$ The closed piece $\overline{\cO}_\sigma$ is itself a toric variety, with quotient fan $\Sigma/\sigma.$ Hence in this case it is an affine space, with skeleton as described in Example \ref{ex:skel-an}.

  The skeleton of $\Pc$ can thus be described as follows: consider an $(n-1)$-simplex $\Delta$ (which one can imagine as the ``boundary'' of the Newton polytope for $\Pc,$ where any face containing zero is considered part of the interior). The skeleton of $\Pc$ can be understood topologically as a union of copies of $T^k\times F$ for each nonempty $(n-1-k)$-dimensional subsimplex $F\subset \Delta$ (including $\Delta$ itself), attached according to the face poset of $\Delta.$ From the perspective of each torus, the attachment is through conormal tori as described in Example \ref{ex:skel-an}. 
\end{example}

\begin{example}\label{ex:skel-vn}
	Now consider the very affine Milnor fiber $\Vc = \{W = 1\}\cap (\CC^\times)^n$ of the invertible polynomial $W$, which may be presented as an unramified $G$-cover $\Vc\to \Pc$ of the pants described in the previous example. Its skeleton, which is a $G$-cover of the skeleton $\partial\LL_{\Sigma_{\AA^n}}$ of the pants $\Pc,$ may be described in a similar way: let $\Sigma_{\AA^n/G^\vee}$ be the fan generated by the rays corresponding to monomials in the function $W$; as the notation suggests, this is indeed a stacky fan for the quotient stack $\AA^n/G^\vee.$ Then $\partial\LL_{\Sigma_{\AA^n/G^\vee}}$ is a skeleton for $\Vc.$
\end{example}

\begin{example}\label{ex:pn-bdryskel}
  The other main example of interest to us will be the mirror to the toric boundary of $\PP^n,$ which is the hypersurface $H=\{x_1+\cdots x_n + \frac{1}{x_1\cdots x_n}=n\}.$ Once again, we can use the fact that $\partial \PP^n$ is a union of toric orbit closures to write the skeleton $\LL_H$ of $H$ as glued together from skeleta $\LL_{\Sigma_{\PP^k}}$ of the mirrors to $\PP^k$ for $k<n.$

  The resulting skeleton for $H$ can be described just as in the second paragraph of Example \ref{ex:skel-pants}, except that instead of starting with an $(n-1)$-simplex $\Delta,$ we start with $n+1$ copies of $\Delta,$ glued into the boundary of an $n$-simplex (this time, literally the boundary of the Newton polytope for $H$), which we can understand as a triangulation of the sphere $S$ described in Remark \ref{rem:skel-sphere}. The tori and subtori in $\LL_H$ are attached according to the combinatorics of this complex, just as in Example \ref{ex:skel-pants}.
\end{example}

\begin{example}\label{ex:png-bdryskel}
  As in Example \ref{ex:skel-vn}, we now consider the analogue of Example \ref{ex:pn-bdryskel} for a general invertible polynomial $W$. Consider the intersection
	\begin{equation}\label{eq:intersection}
		H_G := \{W = n\} \cap \left\{\prod_j x_j^{\sum_k a_{kj}}=1\right\},
	\end{equation}
	which agrees with the space $H$ defined in the previous example when $(a_{ij}) = (\delta_{ij})$ is the identity matrix. By construction, $H_G$ may be presented as an unramified $G$-cover $H_G\to H,$ and as a result the skeleton $\LL_{H_G}$ is a $G$-cover of the skeleton $\LL_H$ described in the previous example. Note that in general, the space $H_G,$ and therefore also the skeleton $\LL_{H_G}$ may have multiple components, due to the fact that the second component in the intersection \ref{eq:intersection} is in general a disjoint union of copies of $(\CC^\times)^{n-1}.$
\end{example}

\section{Geometry of the Milnor fiber}\label{sec:geom}
In \S \ref{sec:main}, we will compute the wrapped Fukaya category of the Milnor fiber $V = \{W=1\}\subset \CC^n.$ As preparation, we describe in this section the symplectic geometry of the space $V.$ Ultimately, our goal is to decompose $V$ into a union of several sectors such that mirror symmetry equivalences for these local pieces, and their relations to each other, are already understood explicitly.

In order to simplify the exposition in this section, we begin by descending along the
ramified $\G$-cover $\rho:\CC^n\to \CC^n$ defined at \eqref{eq:in-cover} in \S\ref{sec:intro}, 
so that we may reduce to the study of the simpler Milnor fiber
\[P:=\{x_1+\cdots x_n=n\}.\]
The map $\rho$ restricts to an unramified $\G$-cover $(\CC^\times)^n\to (\CC^\times)^n$ and an unramified cover of the {\em $(n-1)$-dimensional pants} $\Pc,$
\[\Vc\to \Pc:=\{x_1+\cdots+x_n=n\}\subset (\CC^\times).\] 
For most of \S \ref{sec:geom}, we will content ourselves with studying the spaces $P,\Pc$ instead of $V,\Vc$ ---  i.e., we restrict ourselves to the case $W= x_1+\cdots+x_n.$ In \S \ref{sec:covers}, we will explain how to return to the general case.

\subsection{Sectors from a fibration}
We will study the manifolds $P,\Pc$ through the map
\begin{equation}
  \xymatrix{\CC^n\ar[r]&\CC,&(x_1\,\ldots,x_n)\ar@{|->}[r]&x_1\cdots x_n}.
  \label{eq:x1xn}
\end{equation}
The restriction of \eqref{eq:x1xn} to $P$ 
we will denote by $f$, and the further restriction to $\Pc$ 
we will denote by $\fc,$ or possibly also by $f$ in cases where this will not cause confusion. (In \S \ref{sec:covers}, we will study $V,\Vc$ using the restrictions of the map obtained from \eqref{eq:x1xn} by precomposition with the $G$-cover $\rho.$ We will also denote these restricted maps by $f,\fc$.)
We will need the following facts about this map.
\begin{lemma}\label{lem:ffacts}
  \begin{enumerate}
    \item The map $\fc$ has unique critical value $\{1\}\subset \CC^\times,$ corresponding to a single nondegenerate critical point at $(1,\ldots,1).$
    \item The general fiber of $f$ or $\fc$ is a hypersurface in $(\CC^\times)^{n-1}=\{x_1\cdots x_n = c\}.$ In coordinates $x_1,\ldots,x_{n-1}$ on this $(\CC^\times)^{n-1},$ this hypersurface is defined by the equation $x_1+\cdots + x_{n-1} + \frac{c}{x_1\ldots x_{n-1}}=n.$
  \end{enumerate}
\end{lemma}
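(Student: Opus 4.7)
The plan is direct: both claims reduce to short elementary computations, and I would treat them in order.

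For part (1), I would apply the Lagrange multiplier method to $\fc(x) = x_1\cdots x_n$ restricted to the hyperplane $\{x_1+\cdots+x_n = n\}$ (intersected with $(\CC^\times)^n$). At a critical point, $\nabla(x_1\cdots x_n) = \lambda\,\nabla(x_1+\cdots+x_n)$ gives $\prod_{j\neq i} x_j = \lambda$ for each $i$. Multiplying the $i$-th equation by $x_i$ shows $\lambda x_i = x_1\cdots x_n$ is independent of $i$, and since we are working on $(\CC^\times)^n$ we have $\lambda\neq 0$, so all $x_i$ coincide. Combined with $\sum x_i = n$, this yields the unique critical point $(1,\ldots,1)$ with critical value $1$, as claimed.

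For the non-degeneracy statement, I would compute the Hessian of $x_1\cdots x_n$ at $(1,\ldots,1)$: the mixed partials $\partial_i\partial_j$ with $i\neq j$ equal $1$, while the pure second partials vanish, so the Hessian is $J - I$ with $J$ the all-ones matrix. The holomorphic tangent space to $\Pc$ at $(1,\ldots,1)$ is $\{v\in\CC^n : \sum v_i = 0\}$; on this subspace the quadratic form $v^\top(J-I)v = (\sum v_i)^2 - \sum v_i^2$ restricts to $-\sum v_i^2$, which is visibly nondegenerate. Thus the critical point is a complex Morse singularity.

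For part (2), I would simply change coordinates. On the fiber $f^{-1}(c) = \{x_1+\cdots+x_n = n\}\cap\{x_1\cdots x_n = c\}$, the second equation can be solved for $x_n = c/(x_1\cdots x_{n-1})$; because $c\neq 0$, this defines an isomorphism between the fiber and its image in $(\CC^\times)^{n-1}$ with coordinates $x_1,\ldots,x_{n-1}$. Substituting into the first equation produces the stated defining equation $x_1+\cdots+x_{n-1} + c/(x_1\cdots x_{n-1}) = n$.

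I do not anticipate any genuine obstacle; the only thing one needs to take care of is ensuring that, when passing to the open part $(\CC^\times)^n$, no critical points are lost or gained, but since $(1,\ldots,1)$ lies in $(\CC^\times)^n$ and the Lagrange-multiplier analysis above was carried out on $(\CC^\times)^n$ from the start, nothing changes between $f$ and $\fc$ in this respect.
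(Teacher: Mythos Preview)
Your argument is correct. The paper takes a slightly different route for part (1): rather than using Lagrange multipliers, it eliminates $x_n$ via the constraint $x_n = n - \sum_{j<n} x_j$, writes $f$ explicitly as a function of $x_1,\ldots,x_{n-1}$, and sets the partial derivatives to zero directly; the nondegeneracy of the Hessian at $(1,\ldots,1)$ is then asserted without details. Your Lagrange-multiplier computation is more symmetric in the coordinates and your Hessian argument (identifying the restricted quadratic form with $-\sum v_i^2$) is cleaner and actually carries out the check the paper leaves implicit. Conversely, the paper's parametrized approach naturally sees the degenerate critical locus of $f$ on all of $P$ (where some $x_i$ vanish), which lies outside $(\CC^\times)^n$ and hence outside the scope of your Lagrange-multiplier analysis on $\Pc$; this is not needed for the lemma as stated but is used later in the paper. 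For part (2) the two treatments are the same.
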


\begin{proof}
  Part (3) is clear. For parts (1) and (2), we parametrize $P$ by $x_1,\ldots,x_{n-1},$ so that the map $f$ becomes
  \begin{equation}
    (x_1,\ldots,x_{n-1})\mapsto
    \left(\prod_{j=1}^{n-1} x_j\right)\left(n-\sum_{j=1}^{n-1} x_j\right),
    \label{eq:fparam}
  \end{equation}
  and hence the $i$th derivative of this map is given by
  \begin{equation}
    \frac{\partial f}{\partial x_i} =
    (\prod_{j\neq i}x_j)\left(n-x_i-\sum_{j=1}^{n-1} x_j\right).
    \label{eq:ithder}
  \end{equation}
These derivatives all vanish simultaneously only if all $x_i=0,$ or if all $x_i = 1,$ and the latter point (unlike the former) has a nondegenerate Hessian. 
\end{proof}


The map $f$ is compatible with a Liouville-sectorial decomposition of the hypersurface $P$:

\begin{lemma}\label{lem:sectorial-p}
	Let $F=f^{-1}(\frac12)$ be a general fiber of $f$, let $I\subset \RR$ be a closed interval, and let $\DD$ be a closed $(n-1)$-disk. Then
	the Weinstein manifold $P$ (resp. $\Pc$) can be presented via a Liouville-sectorial gluing $P = P_L\cup_{P_{\frac12}} P_R$ (resp. $\Pc_L\cup_{P_{\frac12}} P_R$) where $P_{\frac12}$ is equivalent to the sector $T^*I \times F$ and
	$P_R$ is equivalent to the sector $T^*\DD$.
\end{lemma}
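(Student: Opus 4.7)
The plan is to interpret $f:P\to\CC$ (and its restriction $\fc:\Pc\to\CC^\times$) as a Liouville Landau--Ginzburg model in the sense of \cite{GPS1}, and then apply the base-splitting construction of Example 2.6 to produce the desired sectorial presentation. Concretely, I choose a narrow vertical strip $H=\{z\in\CC: |\Re(z)-\tfrac{1}{2}|<\epsilon\}$. By Lemma \ref{lem:ffacts}, the critical values of $f$ are exactly $0$ and $1$, so $H$ contains no critical values. After possibly modifying the Liouville form on $f^{-1}(H)$ by a compactly-supported exact term to make it a product Liouville structure, symplectic parallel transport identifies $f^{-1}(H)\cong F\times T^*J$ for $J=(\tfrac{1}{2}-\epsilon,\tfrac{1}{2}+\epsilon)$. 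Taking $I\subset J$ a closed subinterval identifies the central sector $P_{\tfrac{1}{2}}:=f^{-1}(I)$ with $F\times T^*I$, as claimed.

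The two outer sectors $P_L$ (resp.\ $\Pc_L$) and $P_R$ are then defined as the preimages of $\{\Re(z)\leq\tfrac{1}{2}+\tfrac{\epsilon}{2}\}$ (resp.\ its intersection with $\CC^\times$) and of $\{\Re(z)\geq\tfrac{1}{2}-\tfrac{\epsilon}{2}\}$, glued along $P_{\tfrac{1}{2}}$. To make the LG picture rigorous, I equip $P$ (resp.\ $\Pc$) with the Liouville structure obtained by restricting the standard form on $\CC^n$, and verify that $f$ underlies a Liouville LG model; since $f$ is a polynomial map between smooth affine varieties, this follows from \cite[Proposition 1]{Jeffs}, which provides compatible Liouville domains $\overline{P}\subset P$ and $\overline{F}\subset F$ with $\overline{F}$ embedded in the contact boundary $\partial\overline{P}$. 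The sectorial gluing formalism of \cite{GPS1,GPS2} then yields the desired decomposition $P=P_L\cup_{P_{\tfrac{1}{2}}}P_R$ and the analogous decomposition of $\Pc$ (where the only change is on the left, since removing $\{x_1\cdots x_n=0\}$ affects only the preimage of $0\in\CC$, which lies in $P_L$).

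It remains to identify $P_R$ with $T^*\DD$. By Lemma \ref{lem:ffacts}, the sector $P_R$ contains the unique nondegenerate critical point $(1,\ldots,1)$ of $f$ and no other critical behavior. Standard Weinstein Lefschetz-neighborhood theory (e.g.\ \cite[\S 16]{Sei-book}, recast in Liouville-sectorial language by \cite{GPS1}) shows that a Weinstein sector of this type deformation-retracts, as a Liouville sector, onto a cotangent neighborhood of the vanishing Lagrangian thimble of the critical point over the straight path from $\tfrac{1}{2}$ to $1$ in the base. This thimble is a closed Lagrangian $(n-1)$-disk $\DD$ with boundary sphere in $F$, and the Weinstein neighborhood theorem delivers the required equivalence $P_R\cong T^*\DD$ of Liouville sectors.

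The main technical obstacle is ensuring that the Liouville form on $P$ is compatible with the LG structure near infinity, both to arrange the product form on $f^{-1}(H)$ and to confirm that the sectorial gluing truly reconstructs $P$ (rather than some completion or modification of it). This is standard in the polynomial setting considered here, but it is the step that requires care, since the \emph{a priori} Liouville structures coming from restriction of the flat form on $\CC^n$ and the one needed for Example 2.6 can differ by a compactly-supported exact term; the existence of a deformation between them follows from the convexity of the space of Liouville structures compatible with a given Weinstein skeleton.
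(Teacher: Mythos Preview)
Your approach is essentially the same as the paper's: split over the base of the LG model $f$, use the product structure over a strip avoiding the two critical values, and identify $P_R$ via the Lefschetz thimble of the nondegenerate critical point at $(1,\ldots,1)$.

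The one substantive difference is in how the Liouville structure is set up. You begin with the restriction of the standard form from $\CC^n$ and then argue that it can be modified to a product on $f^{-1}(H)$ by a ``compactly supported exact term,'' appealing at the end to convexity of Liouville structures. The paper instead builds the Liouville form explicitly as $\lambda_{\frac12}+f^*\lambda_\CC$ (fiber form plus pullback of a base form with prescribed skeleton), so the product structure on $P_{\frac12}$ is automatic, and then extends: over the Lefschetz critical value $1$ by a standard handle attachment (citing \cite{Giroux-Pardon}), and over the degenerate critical value $0$ by adding $\epsilon\psi\sum|x_i|^2$ for a small bump function $\psi$ supported near $f^{-1}(0)$. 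Your version is correct in outline, but the phrase ``compactly supported'' is imprecise (the fiber $F$ is noncompact, so the discrepancy between the restricted form and a product form need not be compactly supported), and you do not address the extension over $0$ at all---you simply inherit it from the ambient $\CC^n$, which is fine for existence but leaves the compatibility with the sectorial gluing implicit. The paper's constructive approach sidesteps both issues.
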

\begin{proof}
	Let $\lambda_{\frac12}$ be a Liouville form on the fiber $F$ (obtained for instance by restricting the standard Stein potential $\sum_{i=1}^n |x_i|^2$ from $\CC^n$)
	and let $\lambda_\CC$ (resp. $\lambda_{\CC^\times}$) be a Liouville form on $\CC$ (resp. $\CC^\times)$ whose skeleton is the interval $[0,1]$ (resp. the union of a radius-$r$ circle about 0 and the interval $[r,1],$ where $0<r\ll\frac12$.)
	Above a small ball $B$ about $\frac12\in \CC,$ the manifold $P$ (or $\Pc$) is equivalent to a product $F\times B$, and $\lambda_{\frac12}+f^*(\lambda_{\CC}|_B)$ (or $\lambda_{\frac12}+f^*(\lambda_{\CC^\times}|B)$) can be used to equip this space with Weinstein sectorial structure of $F\times T^*I.$

	We would like to extend this to a Weinstein structure on the whole space $P$ (or $\Pc$), which requires explaining what happens over the critical values of the map $f$. The critical value 1 corresponds to a Lefschetz critical point, which entails a single handle attachment to $F\times T^*I.$ Extending Weinstein structure across such a handle is a standard construction --- see for instance \cite[\S 6]{Giroux-Pardon}. The result will be a Weinstein sector $P_R$ whose potential function has a single critical point at the center of the handle $\DD$ to be attached, and as a result this sector will be equivalent to $T^*\DD.$

	For $\Pc$ we are now done (since the Liouville structure extends without problems to the left-hand sector $\Pc_L$, where $\fc$ is a fibration), but for $P$ we still need to extend the Liouville structure over the critical value at 0. To do this, we add to our Stein potential a term coming from the function $(\epsilon \psi)\sum_{i=1}^n|x_i|^2,$ where $\epsilon\in \RR$ is a constant satisfying $0<\epsilon\ll 1$ and $\psi$ is a bump function which is 1 near the preimage of a ball around $0\in \CC$ and 0 elsewhere. This gives a Weinstein structure near the preimage of 0, and in the region where $\psi$ is nonconstant, assuming we have chosen $\epsilon$ sufficiently small, the contribution of this term to the Weinstein structure is negligible with respect to the other terms $\lambda_{\frac12}+f^*\lambda_\CC,$ so this extends to a Weinstein structure on $P$.
\end{proof}

%
%
%

From Lemma \ref{lem:sectorial-p}, we see that the sector $P_R$ 
represents a single Weinstein disk attachment to the Wein\-stein manifold $P_{\frac12},$ and the only further information we need in order to understand $P_R$ as a subsector of $P$ is a description of how this disk is attached: we need to identify the Lagrangian sphere $S = \partial \DD$ inside the skeleton of $P_{\frac12}.$
%
%
%

The attachment of the disk $\DD$, or in other words the degeneration of the fiber of $f$ over $1\in \CC,$ can be most easily described using tropical geometry.
Recall that the complement of the amoeba $\cA_M$ has a single bounded region, whose boundary is the diffeomorphic image of the real positive locus $M_+$ inside $M$.

\begin{lemma}
  The boundary of the Lagrangian disk $\DD$ attached at $\{1\}$ is the real positive locus $M_+\subset M.$
  \label{lem:disk-attach}
\end{lemma}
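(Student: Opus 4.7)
The plan is to identify $\partial\DD$ with the Lefschetz vanishing cycle of the Morse critical point of $f$ at $(1,\ldots,1)$ recorded in Lemma~\ref{lem:ffacts}, and then to exhibit $M_+$ as this vanishing cycle by sweeping it out through positive real loci as the value $c=f$ varies from $\tfrac12$ up to the critical value $1$.

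First I would verify that $M_+\subset M := f^{-1}(\tfrac12)$ is an embedded Lagrangian $(n-2)$-sphere. Since the equations $x_1+\cdots+x_n=n$ and $x_1\cdots x_n=\tfrac12$ both have real coefficients, complex conjugation on $\CC^n$ restricts to an antiholomorphic involution of $M$ whose fixed locus is Lagrangian for the K\"ahler form inherited from $\CC^n$; intersecting this fixed locus with $(\RR_{>0})^n$ selects $M_+$ as a closed submanifold (closedness uses $\prod x_j=\tfrac12$ to stay away from the coordinate hyperplanes), so $M_+$ is itself Lagrangian. Its diffeomorphism type $S^{n-2}$ is the content of Remark~\ref{rem:skel-sphere}.

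Next I would study the one-parameter family $\{M_+^{(c)}\}_{c\in[\tfrac12,1]}$ of positive real loci along the segment $[\tfrac12,1]\subset \RR_{>0}$. Using the parametrization~\eqref{eq:fparam} and expanding around $(1,\ldots,1)$ in coordinates $y_i=x_i-1$, a direct computation yields $f-1 = -Q(y) + O(|y|^3)$ where $Q(y) = \sum_i y_i^2 + \sum_{i<j} y_i y_j$; the matrix $\tfrac12(I+J)$ of $Q$ (with $J$ the all-ones $(n-1)\times(n-1)$ matrix) has eigenvalues $n/2$ and $1/2$, so is positive definite. The real Morse lemma applied to $f|_{P_\RR}$ then shows that for $c$ slightly less than $1$, the positive real level set $M_+^{(c)}$ is a smooth $(n-2)$-sphere collapsing smoothly to $(1,\ldots,1)$ as $c\to 1$. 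Because $f$ has real coefficients, symplectic parallel transport along $[\tfrac12,1]$ commutes with complex conjugation and hence preserves the conjugation-fixed locus; the family $\{M_+^{(c)}\}$ is therefore the symplectic parallel transport of $M_+$, and together with its collapse at $c=1$ it assembles into a smooth Lagrangian disk in $f^{-1}([\tfrac12,1])$ with boundary $M_+$.

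By the standard characterization of vanishing cycles of Morse--Lefschetz singularities, the Lagrangian disk $\DD$ produced by the Weinstein handle attachment in the proof of Lemma~\ref{lem:sectorial-p} is the unique such collapsing disk up to Hamiltonian isotopy, so $\partial\DD$ agrees with $M_+$ up to Hamiltonian isotopy, which is what was to be shown. The only mildly delicate point I anticipate is ensuring that the collapsing family of real spheres assembles into a genuinely smooth (rather than merely continuous) Lagrangian disk at the endpoint $c=1$; this is handled directly by the real Morse normal form above.
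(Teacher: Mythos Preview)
Your proof is correct and follows essentially the same approach as the paper: identify $\partial\DD$ with the vanishing cycle of the Lefschetz singularity at $(1,\ldots,1)$, and then argue that $M_+$ is this vanishing cycle because the positive real loci $M_+^{(c)}$ collapse to the critical point as $c\to 1$. The paper's argument is considerably more terse, asserting the collapse without justification and pointing to the tropical picture; your additional details (the explicit Hessian computation and the observation that symplectic parallel transport commutes with the antiholomorphic involution, forcing the real loci to coincide with the parallel transports of $M_+$) are exactly what is needed to make that assertion rigorous.
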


\begin{proof}
	As explained above, the Lagrangian disk $\DD$ is the handle attached to the product $B\times F,$ where $B$ is a disk in $\CC$ and $F=f^{-1}(\frac12)$ is a general fiber, by a single Weinstein handle attachment, corresponding to the Leschetz singularity of the function $f:P\to \CC$ over the critical value $1\in \CC.$ The Legendrian sphere in $F$ along which this handle is attached is precisely the vanishing cycle of $F$ corresponding to this Lefschetz singularity; the whole Lagrangian handle $\DD$ is the Lefschetz thimble for this singularity.

	Therefore, we need to find the vanishing cycle for this Lefschetz singularity. Parallel transport to the fiber over 1 collapses the Lagrangian sphere $M_+\cong S^{n-2}\subset F$ to the point $(1,\ldots,1),$ so we conclude that $M_+$ is the vanishing cycle associated to the critical point over 1. This can be seen most clearly from a tropical perspective: the degeneration of the amoeba $\cA_M$ illustrated in Figure \ref{fig:tropdeg1} should be read as a movie depicting the Lefschetz thimble filling in the interior of $M_+.$
\end{proof}

\begin{figure}[h]
  \begin{center}
    \includegraphics[width=5in]{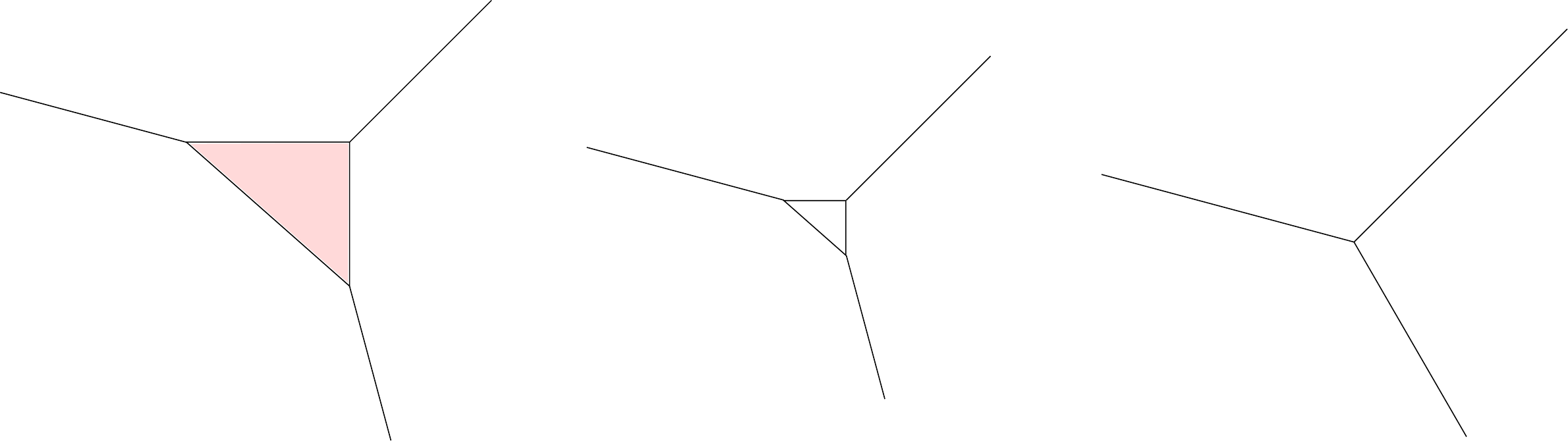}
  \end{center}
  \caption{The degeneration of the tropical hypersurface of $M$ as it approaches the critical value $1$. In the figure on the left, the collapsing region where the disk is to be attached is shaded.}
  \label{fig:tropdeg1}
\end{figure}

In other words, the disk $\DD$ is attached to the skeleton $\LL_{\Pc_{\frac12}}$ of a general fiber
along the sphere $S$ corresponding to the top-dimensional cones in the fan $\Sigma_{\PP},$ as described in Remark \ref{rem:skel-sphere}.

\subsection{The sector around 0}
As with the sector $P_R,$ the left-hand sector $P_L$ can be understood by studying a map $P_L\to \CC_{\Re\leq\frac12}$ with a single critical value. However, unlike in the case of $P_R,$ the critical value of $f|_{P_L}$ does not correspond to a Picard-Lefschetz singularity; in fact, the singularity above $0$ is not isolated, nor is it Morse-Bott;  it is built out of the normal-crossings degenerations we shall discuss in \S \ref{sec:nxings}.

By contrast, the punctured sector $\Pc_L$ is significantly simpler to understand than $P_L$ since the map $\fc|_{\Pc_L}$ has no critical points. Let $P_{\pm\epsilon}$ be the fibers over this map over $\pm\epsilon$ for some $\epsilon\in (0,1),$ and write $\phi_\pm:P_{\epsilon}\simeq P_{-\epsilon}$ for the two identifications of these fibers, given by parallel transport above and below $0,$ respectively. Also write $\cA_2$ for the Liouville sector given by a disk with three stops on the boundary.
\begin{proposition}\label{prop:leftskel-va}
  The sector $\Pc_L$ is obtained from the product sector $P_\epsilon\times \cA_2$ by gluing together two of the ends using the identification $(\phi_-)^{-1}\circ \phi_+$. 
\end{proposition}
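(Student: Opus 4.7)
The plan is to exploit the fact that $\fc|_{\Pc_L}$ has no critical values (Lemma \ref{lem:ffacts}), so that the restriction $\fc|_{\Pc_L}: \Pc_L \to \CC^\times_L$ is a symplectic fibration over its image $\CC^\times_L$ (a punctured left half-disk in $\CC^\times$, with one sectorial stop along the interface with $\Pc_R$). Since the fibration has no critical points, the total space can be reconstructed from the fiber, the base, and the monodromy around $0$.

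First, I would describe the base as a sectorial gluing. Cut $\CC^\times_L$ along a radial ray from the puncture, say the negative real axis. The resulting simply connected region $U$ carries three boundary stops: one from the interface with $\Pc_R$, and two from the two sides of the cut. As a Liouville sector, $U$ is equivalent to $\cA_2$. The base $\CC^\times_L$ is recovered from $U$ by sectorial gluing (in the sense of \cite[\S 2.6]{AGEN3}) that identifies the two cut-sides by the identity map.

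Second, since $\cA_2$ is simply connected and $\fc|_{\Pc_L}$ is a fibration, a choice of basepoint fiber $P_\epsilon$ (placed on the positive real slice of $U$) together with symplectic parallel transport extends to a sectorial equivalence $\fc^{-1}(U) \cong P_\epsilon \times \cA_2$. The compatibility with the Liouville structure comes from the same construction used in Lemma \ref{lem:sectorial-p}: add to the potential on the fiber the pullback of a Liouville potential on the simply connected base $U$, so that parallel transport becomes a strict sectorial equivalence.

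Finally, to recover $\Pc_L$ from $\fc^{-1}(U) \cong P_\epsilon \times \cA_2$, we must specify how the fibers over the two cut-edges are glued. Both edges map to the same fiber $P_{-\epsilon}$ of $\fc|_{\Pc_L}$, but in the product trivialization they have been identified with $P_\epsilon$ via parallel transport along paths passing above and below $0$, which are precisely the maps $\phi_+$ and $\phi_-$; hence the sectorial gluing of the two ends must twist by the discrepancy $(\phi_-)^{-1}\circ \phi_+$. The main obstacle, which is purely technical rather than conceptual, is to verify that this cut-trivialize-reglue procedure is carried out in the appropriate Liouville-sectorial (rather than merely symplectic) sense: this amounts to checking that the Liouville forms near the cut can be arranged so that the parallel transport trivializations extend to strict sectorial equivalences and so that the regluing of boundary collars reproduces the original Liouville structure on $\Pc_L$.
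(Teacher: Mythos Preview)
Your proof is correct and follows essentially the same argument as the paper. The paper phrases the construction slightly differently --- it equips the annular base with a Liouville structure whose skeleton is the lollipop $\lol$ and then observes that the skeleton of $\Pc_L$ lives over this lollipop, with the $A_2$ vertex having two of its ends glued by the half-monodromies $\phi_\pm$ --- but this is exactly your cut-trivialize-reglue picture viewed from the skeletal side: cutting the base along a ray and regluing is the same as recognizing that the lollipop skeleton is a tripod with two legs identified.
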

\begin{proof}
  The sector $\Pc_L$ has a map $\fc$ to the sector given by an annulus with one stop on its boundary, and this map has no critical points. Hence this sector admits a Liouville form by adding Liouville forms on the base and on the fiber $P_\epsilon.$ If we choose a Liouville structure on the base with skeleton the ``lollipop'' $\lol,$ then the sector $\Pc_L$ will have a skeleton living over the lollipop, which has one $A_2$ singularity with two of its ends glued together; in the fiber, the identifications on these two ends with $P_\epsilon$ will differ by the half-monodromies $\phi_\pm.$
\end{proof}

\begin{remark}\label{rem:totalskel-va}
	We can use the above description of $\Pc_L$ to understand a Lagrangian skeleton for the space $\Pc.$ In fact, $\Pc$ is an $(n-1)$-dimensional pants, and we have already described its skeleton $\LL_{\Pc}$ in Example \ref{ex:skel-pants}. But the relation of this skeleton to $\Pc_L$ is interesting.

  Recall that $\LL_{\Pc}$ has a sectorial cover indexed by the face poset of an $(n-1)$-simplex, where a $k$-face of $P_L$ corresponds to the sector with FLTZ skeleton $\LL_{\Sigma_{\AA^{n-1-k}}}\times T^*\RR^k,$ with the interior of the simplex corresponding to sector $T^*\RR^{n-1},$ which has skeleton a disk $\DD.$ Write $\LLc$ for the complement $\LL_{\Pc}\setminus \DD$ of this disk in the skeleton of $\Pc.$

\begin{figure}[h]
  \begin{center}
    \includegraphics[scale=0.7]{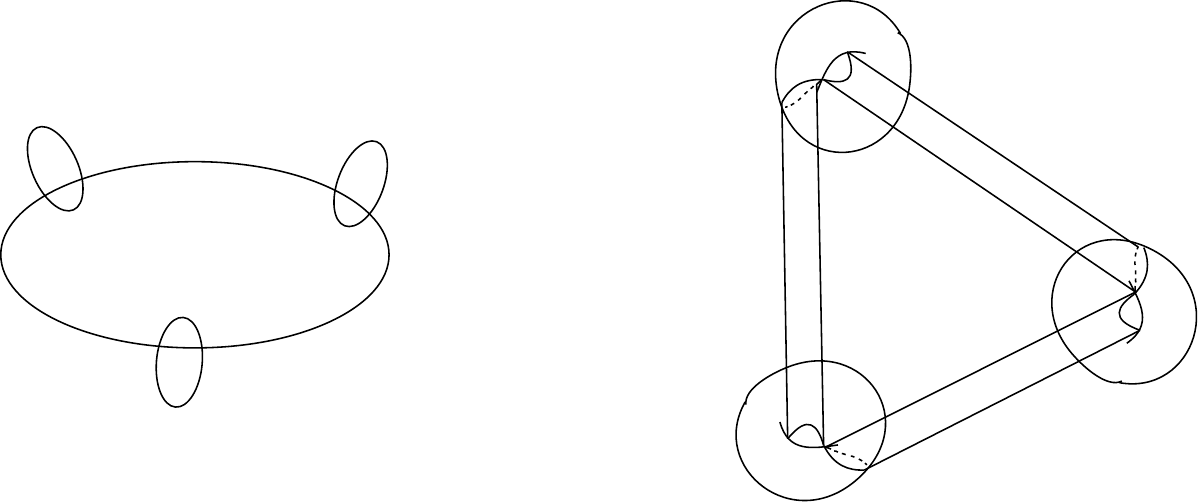}
  \end{center}
  \caption{The Lagrangians $\LL_M$ (left) and $\LLc$ (right) for $n=3.$ Note that the $\RR^k\times T^{n-k-2}$ pieces of $\LL_M$ become copies of $\RR^k\times T^{n-k-1}$ after being swept out by the parallel transport around 0.}
  \label{fig:skel-rotated}
\end{figure}

  Then $\LLc$ is precisely the Lagrangian swept out in $\Pc_L$ by parallel transport of the skeleton of a general fiber $\LL_{\Pc_{\frac12}}$ around 0, as illustrated in Figure \ref{fig:skel-rotated}. This is the Lagrangian skeleton of the Liouville {\em subdomain} $\fc^{-1}(\mathring{\DD})$ above a small punctured disk $\mathring{\DD}$ around 0. The total skeleton $\LL_{\Pc}$ of the space $\Pc$ is obtained from this by the disk attachment indicated by the critical value in $\Pc_R,$ which attaches the interior disk to the skeleton.
\end{remark}

\subsection{Covering spaces}\label{sec:covers}
Thus far we have described the geometry only of the space $P$, and its open locus $\Pc$, rather than the more general Milnor fiber $V$ (and its very affine part $\Vc$). However, 
as we now explain, the case of general $V$ immediately follows from this one.

The pants $\Pc\subset (\CC^\times)^n$ is defined by the Laurent polynomial $x_1+\cdots+x_n-n,$ which has Newton polytope 
\[\Newt(x_1+\cdots+x_n-n) =\Delta_n:=\Conv(0,e_1,\ldots,e_n)\subset \RR^n\] 
the standard simplex. The key fact we use, which played an essential role in the constructions of \cite{GS17}, is that the Newton polytope of the function $W-n$ is the larger simplex 
\[
\Newt(W-n)=\Delta_W:=\Conv\left(0,\sum_j a_{1j},\ldots,\sum_j a_{nj}\right)\subset \RR^n,
\]
where as usual we write
\[W = \sum_{i=1}^n\prod_{j=1}^n x_j^{a_{ij}}.\]

The matrix $A=(a_{ij})$ defines a linear map $\RR^n\to\RR^n$ taking $\Delta_n$ to $\Delta_W,$ hence a map $\rho:(\CC^\times)^n\to(\CC^\times)^n$ which restricts to an unramified $G$-cover $\Vc\to \Pc,$ as described in the Introduction. This extends to a ramified $G$-cover
\[
	\rho:\CC^n\to \CC^n,
\]
restricting to a map $V\to P$ which is a $G$-cover over its image, with no ramification outside the fiber over $0$.

Passing along the cover $\Vc\to\Pc$ (resp. $V\to P$), we can immediately lift our description of the sectorial decomposition of $\Pc$ (resp. $P$) to the space $\Vc$ (resp. $V$).  Consider the fan of cones on the faces of $\Delta_W,$ and write $\Sigma_W$ for its image in the quotient $\RR^n/\RR_\Delta$ by the diagonal copy of $\RR^n.$ Then the $G$-cover of the sectorial decompositions for $\Pc,P$ described above are as follows.
\begin{proposition}
	The space $V$ (resp. $\Vc$) can be presented as a sectorial gluing $V= V_L\cup_{V_{\frac12}} V_R$ (resp. $\Vc = \Vc_L\cup_{V_{\frac12}}V_R$).
  \begin{itemize}
  	\item The central sector $V_{\frac12}$ is a product $F\times T^*I$, where $I$ is an interval and $F$ is a fiber of $f,$ admitting a Lagrangian skeleton given by the boundary FLTZ Lagrangian $\partial\LL_{\Sigma_W}.$ 
	\item The right-hand sector $V_R$ is a disjoint union $\bigsqcup_{i=1}^{|G|}T^*\DD$ of $|G|$ copies of the cotangent bundle of a closed disk, attached to the $|G|$ lifts in $\Vc_{\frac12}$ of the real positive sphere $S$ in $\Pc_{\frac12}.$
	\item The left-hand sector $\Vc_L$ can be obtained from $\Vc_{\frac12}$ by taking a product with the $\cA_2$ sector and gluing two ends together by the monodromy isomorphism.
  \end{itemize}
  \label{prop:G-va-sectcov}
\end{proposition}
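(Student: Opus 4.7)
The plan is to pull back the sectorial decompositions of $P$ and $\Pc$ established in Lemma \ref{lem:sectorial-p} and Proposition \ref{prop:leftskel-va} along the cover $\rho$. The key observation is that the ramification locus of $\rho$ is the divisor $\{x_1\cdots x_n = 0\}$, and its intersection with $P$ sits entirely in the fiber of $f_P:P\to\CC$ over $0$. Hence over the closed half-plane containing $1/2$ and $1$, which carries the central and right-hand sectors of $P$, the map $\rho$ restricts to a genuinely unramified $G$-cover; likewise $\Pc_L \subset (\CC^\times)^n$ lies entirely outside the ramification locus. Thus the Weinstein and sectorial structures on $P_R$, $P_{1/2}$, and $\Pc_L$ lift tautologically to $V_R$, $V_{1/2}$, and $\Vc_L$ via the Galois cover, and the gluing $V = V_L\cup_{V_{1/2}} V_R$ (respectively $\Vc = \Vc_L\cup_{V_{1/2}}V_R$) is the pullback of the gluing for $P$ (respectively $\Pc$).

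For the central sector, $V_{1/2} = \rho^{-1}(P_{1/2}) \cong F_V \times T^*I$, where $F_V$ is the $|G|$-sheeted unramified cover of the general fiber $F_P$ of $f_P$, i.e.\ a general fiber of $f$. By construction this is the hypersurface of Example \ref{ex:png-bdryskel}, so its skeleton is the boundary FLTZ Lagrangian $\partial\LL_{\Sigma_W}$, as claimed. For the right-hand sector, the unique critical value of $f_P$ on $P_R$ is $1$ with critical point $(1,\ldots,1) \in (\CC^\times)^n$; since $\rho$ is unramified there, $\rho^{-1}(1,\ldots,1)$ consists of $|G|$ distinct points, each a nondegenerate critical point of $f$ on $\Vc$. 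Each of the $|G|$ Lefschetz singularities contributes a disk-handle attachment, so $V_R$ is a disjoint union of $|G|$ copies of $T^*\DD$. By Lemma \ref{lem:disk-attach} and functoriality of the Lefschetz thimble construction under covers, the attaching sphere for each copy is the connected lift to $V_{1/2}$ of the positive real sphere $S\subset P_{1/2}$ terminating at that critical point.

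Finally, for the left-hand sector $\Vc_L = \rho^{-1}(\Pc_L)$: since the only critical value of $\fc$ in $\Vc$ lies over $1$, the restricted map $\fc: \Vc_L \to \CC^\times$ has no critical points, so the product Liouville structure on $\Pc_L$ employed in Proposition \ref{prop:leftskel-va} pulls back to a product Liouville structure on $\Vc_L$ over the same lollipop-shaped base skeleton. The argument of Proposition \ref{prop:leftskel-va} then gives that $\Vc_L$ is obtained from $V_\epsilon \times \cA_2$ by gluing two ends via the monodromy of $\fc$ on $\Vc_\epsilon$, which is the lift along $\rho$ of the base monodromy on $\Pc_\epsilon$. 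The only step requiring any care is that the Weinstein structures lift compatibly along $\rho$ through the ramification over the deleted divisor (needed only for the passage from $\Vc$ to $V$), but this is handled by the same Stein-potential bump construction as in Lemma \ref{lem:sectorial-p}, pulled back to $V$.
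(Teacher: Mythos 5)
Your argument is correct and is exactly the route the paper intends: the paper offers no explicit proof beyond the one-line remark that the sectorial decomposition of $P$ (resp.\ $\Pc$) from Lemma \ref{lem:sectorial-p} and Proposition \ref{prop:leftskel-va} lifts along the $G$-cover $\rho$, which is unramified away from the fiber over $0$. Your write-up simply fills in the details of that lift (the $|G|$ preimages of the Lefschetz point, the lifted attaching spheres, and the extension of the Weinstein structure over the ramified fiber), all of which check out.
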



\section{Fukaya categories from deformation theory}\label{sec:section4}
In the previous section, we gave detailed descriptions of the sectors $\Vc_L,V_{\frac12},$ and $V_R$, which can (and will in \S \ref{sec:main}) be used to compute the wrapped Fukaya categories of these sectors and ultimately their gluing, the wrapped Fukaya category $\cW(\Vc)$ of $\Vc.$
However, we saw that the partially compactified sector $V_L$ was more complicated than $\Vc_L$ in general.

Ideally, we would like to compute $\cW(V_L)$ using our knowledge of $\cW(\Vc_L)$ and a small amount of extra data. In \S \ref{sec:nxings}, we will explain how this was accomplished for $P_L,\Pc_L$ in \cite{Nad-cnw}; afterward, we will reconceptualize this argument as a general principle about Fukaya categories, which we will summarize as Conjecture \ref{conj:main}.

\subsection{Mirrors to normal crossings}\label{sec:nxings}
Following the constructions of \cite{Nad-cnw}, we now discuss a proof of homological mirror symmetry for the $(n-2)$-dimensional pair of pants, which is now understood as living on the B-side.
\begin{definition}
  We denote by $\nsecc_n$ the Liouville sector corresponding to the Landau-Ginzburg model $(\CC^n, x_1,\ldots,x_n).$
\end{definition}
The general fiber $F_1$ of the function $x_1\cdots x_n$ is a complex $(n-1)$-torus, which degenerates to $\{x_1\cdots x_n=0\}$ over the unique critical value 0 of this map. 

\[T = \{x_1\cdots x_n =1, |x_1|=\cdots = |x_n|\}\] 
be the unit torus in the fiber over 1.
As discussed in \cite{Nad-cnw}, the skeleton $\LL_{\nsecc_n}$ of this sector is the parallel transport of $T$ over the real half-line $[0,\infty),$ which collapses $T$ to a point in the fiber over 0. In other words, $\LL_{\nsecc_n}$ is the cone over the compact $(n-1)$-torus $T$.

The wrapped Fukaya category $\cW(\nsecc_n)$ of this sector is computed in \cite{Nad-cnw} by studying the spherical ``cap'' functor
\[
	\wmsh(\LL_{\nsecc_n})\to \wmsh(T)
\]
and realizing this functor as mirror to the pushforward along the inclusion of a linear hypersurface in $F_1^\vee := (\CC^\times)^{n-1}.$ 


Before we explain this calculation, we will replace the sector $\nsecc_n$ by a related Liouville sector.
\begin{definition}\label{def:nsec}
	We denote by $\nsec_n$ the cornered Liouville sector corresponding to the completed LG triple $(\widehat{\CC^n},x_1\cdots x_n, x_1+\cdots + x_n)$ as in Definition \ref{defn:cutoff-triple}.
\end{definition}

Observe that if we restrict the polynomial $x_1 + \cdots x_n$ to the fiber $F_1 = \{x_1\cdots x_n = 1\}$, we obtain the function $x_1+\cdots + x_{n-1}+\frac{1}{x_1\cdots x_n}$ on $F_1\cong (\CC^\times)$; as we have seen, this is the Hori-Vafa superpotential of the mirror to $\PP^{n-1}.$ As we shall see, passing from $\nsecc_n$ to $\nsec_n$ has the effect on the mirror of replacing the linear hypersurface $\Pc_{n-2}\subset F_1^\vee\cong (\CC^\times)^{n-1}$ with its compactification in $\PP^{n-1}\supset (\CC^\times)^{n-1}.$


The definition of completed LG triple is set up to ensure that the skeleton $\LL_{\nsec_n}$ is straightforward to compute from knowledge of the sectors $(\CC^n, x_1\ldots,x_n)$ and $(F_1, x_1+\cdots + x_n)$: it is given by parallel transport, over a ray emanating from $0$, of the skeleton $\LL_{\Sigma_{\PP^{n-1}}}$ of the LG model $(F_1,x_1+\cdots + x_n).$ As described in Example \ref{ex:orlov-triple}, the boundary sector of the cornered LG triple $\nsec_n$ has its own sectorial decomposition: 
\[
	\partial \nsec_n = \partial^h\nsec_n \cup_{\partial^2\nsec_n} \partial^v\nsec_n,
\]
where the {\em horizontal boundary} $\partial^h\nsec_n$ is the LG Liouvile sector $(F_1,x_1+\cdots +x_n)$ with skeleton $\LL_{\Sigma_{\PP^{n-1}}}$; the {\em vertical boundary}, obtained by equipping a subdomain of $\{x_1+\cdots + x_n = n\}$ (on which $x_1\cdots x_n$ has no critical values outside 0) with the potential $x_1\cdots x_n$, has skeleton given by taking the parallel transport of $\partial \LL_{\Sigma_{\PP^{n-1}}}$ to $0$ in the $x_1\cdots x_n$-plane; and the {\em corner} $\partial^2 \nsec_n$ is their intersection, which has skeleton $\partial \LL_{\Sigma_{\PP^{n-1}}}.$


\begin{example}
  When $n=2,$ the Lagrangian $\LL_{\Sigma_{\PP^1}}$ is the union of the circle $T$ with the conormal rays to $1\in T.$ Hence the total skeleton $\LL_{\nsec_2}$ is obtained from the degenerating circle $\LL_{\nsecc_2}$ by attaching a half-plane.
\end{example}
\begin{remark}
	In the previous example, the attached half-plane does not contribute to the calculation of the category $\wmsh(\LL_{\nsec_2}),$ since it is not affected by the degeneration at 0. (This is mirror to the fact that a general hyperplane in $\PP^1$ does not intersect its boundary. In fact, the sector $\nsec_n$ was introduced in \cite{Nad-cnw} precisely to prove the second assertion of Lemma \ref{lem:nad-main} below.) In higher dimensions, the corner skeleton
	contains tori which will degenerate over 0 and hence affect the computation of $\cW(\nsec_n).$ 
	Nevertheless, those components of the corner skeleton which do not contribute to $\cW(\nsec_n)$ still play a role in Lemma \ref{lem:nad-main}.
\end{remark}

\begin{remark}
	The vertical boundary sector $\partial^v\nsec_n$ is easily seen to be equivalent to the sector $P_L$ described in Lemma \ref{lem:sectorial-p}: by definition, each of these sectors is obtained by beginning with a fiber of $x_1+\cdots + x_n$; passing to a region which does not contain the extraneous critical value of $x_1\cdots x_n$; and then adding a stop given by a fiber of $x_1\cdots x_n$.
\end{remark}

The space $\CC^n$ --- and ultimately also the sectors constructed from it --- is equipped with a polarization coming from its presentation as a cotangent bundle $T^*\RR^n.$ Concretely, this means that exact Lagrangians in $\CC^n,$ such as the Lagrangian skeleton $\LL_{\nsec_n}$, may be lifted to conic Lagrangians in $T^*(\RR^{n+1}),$ where microlocal sheaves are defined, and this is how the computations in \cite{Nad-cnw} are accomplished. Thanks to Theorems \ref{thm:global-mush} and \ref{thm:gps-maincomparison}, we can now understand those calculations 
%
in terms of Fukaya categories of sectors rather than microlocal sheaves on the skeleta of those sectors, and we will now use those language. The first of these results is the computation of the monodromy automorphism on the Fukaya category of the horizontal boundary sector, $\cW(\partial^h\nsec_n).$ 

\begin{proposition}[{\cite[Corollary 4.24]{Nad-cnw}}]
  The mirror symmetry equivalence
  \[
    \cW(\partial^h\nsec_n) \cong \Coh(\PP^{n-1})\]
		identifies the clockwise monodromy automorphism $\mu^{-1}$ with the functor $-\otimes\cO_{\PP^{n-1}}(-1)$ of tensoring with the invertible sheaf $\cO_{\PP^{n-1}}(-1).$ 
\end{proposition}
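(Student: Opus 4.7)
The plan is to realize $\mu^{-1}$ as a concrete symplectomorphism of $F_1 = \{x_1\cdots x_n = 1\}$ coming from a $\CC^\times$-action, and then transport it across the CCC equivalence $\wmsh(\LL_{\Sigma_{\PP^{n-1}}})\cong\Coh(\PP^{n-1})$ of Theorem \ref{thm:ccc}. First, I would fix any character $\chi=(\chi_1,\ldots,\chi_n)\in\ZZ^n$ with $\sum_i\chi_i = 1$, and consider the $\CC^\times$-action $t\cdot(x_1,\ldots,x_n) = (t^{\chi_1}x_1,\ldots,t^{\chi_n}x_n)$ on $(\CC^\times)^n$. This action makes $x_1\cdots x_n:(\CC^\times)^n\to\CC^\times$ equivariant over the standard multiplication on the target, so restricting to the clockwise loop $t = e^{-2\pi i\theta}$, $\theta\in[0,1]$, gives a Hamiltonian loop of symplectomorphisms of $F_1$ whose time-$1$ map is a representative for $\mu^{-1}$. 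Different choices of $\chi$ give loops differing by the diagonal $\CC^\times$, which acts trivially on $F_1$, so the autoequivalence of $\cW(F_1,x_1+\cdots+x_n)$ produced this way is canonical.

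Second, I would transport this symplectomorphism through mirror symmetry. In the cotangent coordinates $(\CC^\times)^n = T^*N_{S^1}$ of Section \ref{sec:veryaffine}, the $\CC^\times$-action above is, at the level of arguments, a rotation of the base torus $N_{S^1}$ by $\chi\in N$; after a compactly supported Hamiltonian isotopy inside $T^*N_{S^1}$, this becomes a lattice translation by $\chi$. The CCC equivalence $\wmsh(\LL_{\Sigma_{\PP^{n-1}}})\cong\Coh(\PP^{n-1})$ is built to be equivariant with respect to the natural translation action of $N$ on the FLTZ skeleton and the dual character-twist action of $M = N^\vee$ on $\Coh(\PP^{n-1})$. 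Hence translation by $\chi$ corresponds to tensoring by the character $\chi\in M = \ZZ^n$. Using the presentation $\PP^{n-1} = [\AA^n\setminus 0 / \CC^\times]$ with diagonal $\CC^\times$, any character $\chi$ with $\sum\chi_i = 1$ descends to the class $1\in\Pic(\PP^{n-1}) = \ZZ$, and the clockwise direction of the loop inverts the sign, yielding the autoequivalence $-\otimes\cO_{\PP^{n-1}}(-1)$.

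The main obstacle is making precise the equivariance of the CCC under lattice translation/character twist. A clean way to handle this is to verify the conclusion directly on a generating pair of Lagrangians. Specifically, take $L_0\subset F_1$ to be the positive real locus (or any brane explicitly mirror to $\cO_{\PP^{n-1}}$, e.g.\ the cotangent fiber at a point in $N_{S^1}$), and compute its image under the explicit loop constructed in the first step: the translated brane is mirror to $\cO_{\PP^{n-1}}(-1)$, both by inspection of its position relative to $\LL_{\Sigma_{\PP^{n-1}}}$ and by its class in $K$-theory. In the 2-periodic setting adopted in this paper, no Maslov-grading subtleties intrude, and the identification reduces to tracking a single lattice vector through the combinatorics of the fan of $\PP^{n-1}$.
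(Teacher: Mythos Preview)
The paper does not give its own proof of this proposition; it is cited directly as \cite[Corollary 4.24]{Nad-cnw}, where the argument is an explicit convolution computation on the microlocal-sheaf side of the FLTZ skeleton $\LL_{\Sigma_{\PP^{n-1}}}$.

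Your geometric route via a $\CC^\times$-action is a reasonable alternative in spirit, but as written there is a genuine gap. You assert that the clockwise loop $t=e^{-2\pi i\theta}$ in the $\CC^\times$-action ``gives a Hamiltonian loop of symplectomorphisms of $F_1$ whose time-$1$ map is a representative for $\mu^{-1}$,'' and then that this map is ``a rotation of the base torus by $\chi\in N$.'' But $\chi$ is a lattice vector, so that rotation is the identity on $N_{S^1}$; indeed the time-$1$ map of your loop is literally $\id_{F_1}$, since $e^{-2\pi i}=1$ and the action by $t=1$ is trivial. The nontriviality of $\mu^{-1}$ as an autoequivalence of $\cW(F_1,x_1+\cdots+x_n)$ is not carried by any time-$1$ symplectomorphism: it lives entirely in how the \emph{stop} moves during the loop. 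Your $\CC^\times$-action does not preserve the superpotential $x_1+\cdots+x_n$, so using it to trivialize the family of fibers converts the monodromy into a loop of potentials $W_\theta=\sum_i e^{-2\pi i\theta\chi_i}x_i$ on the fixed fiber $F_1$, and it is this isotopy of FLTZ skeleta that produces the autoequivalence. Your claimed ``equivariance of the CCC under lattice translation'' is vacuous as stated, since the translation in question is the identity map of the torus; what you actually need is that the \emph{isotopy} from $0$ to $\chi$ induces $-\otimes\cO(-1)$ on $\Coh(\PP^{n-1})$, and this is precisely the content you have not supplied.

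Your third-paragraph fallback of tracking a single generating brane through the loop is a viable repair, and in the $2$-periodic setting it would suffice, but carrying it out amounts to the direct computation done in \cite{Nad-cnw} rather than a shortcut around it.
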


The calculation from \cite{Nad-cnw} now proceeds via the following lemma:
\begin{lemma}[\cite{Nad-cnw}]
  \begin{enumerate}
    \item The horizontal cap functor
  \[
  \cap^h:\cW(\nsec_n)\to\cW(\partial^h\nsec_n)
  \]
  is spherical and monadic. Under the identification $\cW(\partial^h\nsec_n)\cong\Coh(\PP^{n-1}),$ this monad is therefore given by tensoring with the cone of a morphism
  \[
	s:\cO_{\PP^{n-1}}(-1)\to \cO_{\PP^{n-1}}.
  \]
\item The morphism $s$ is {\em generic}: there exist coordinates $z_i$ on $\PP^{n-1}$ in which $s$ is given by the section $z_1+\cdots+z_n\in\Gamma(\PP^{n-1},\cO(1)).$
  \end{enumerate}
  \label{lem:nad-main}
\end{lemma}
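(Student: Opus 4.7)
The plan splits naturally according to the two parts of the statement.

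For sphericality, observe that by Definition~\ref{def:nsec} the sector $\nsec_n$ is a completed LG triple whose horizontal boundary is the fiber sector $(F_1, x_1+\cdots+x_n)$ of the LG model $x_1\cdots x_n$. The cap functor $\cap^h$ is then precisely the Orlov cap functor of this LG structure, so Theorem~\ref{thm:sylvan} applies and gives a presentation of the monad
\[
  \cap^h\cup^h \;\cong\; \Cone\bigl(\mu^{-1}\xrightarrow{\,s\,}\id_{\cW(\partial^h\nsec_n)}\bigr)
\]
for some natural transformation $s:\mu^{-1}\to\id$. Under the mirror identification $\mu^{-1}\cong -\otimes\cO_{\PP^{n-1}}(-1)$, such an $s$ is by Yoneda the same datum as a global section of $\cO_{\PP^{n-1}}(1)$, i.e.\ a linear form in the homogeneous coordinates $z_1,\ldots,z_n$.

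For monadicity one needs conservativity of $\cap^h$, after which Barr-Beck gives the result. Here I would argue as in \cite[\S 5]{Nad-cnw}: the skeleton $\LL_{\nsec_n}$ is built by parallel transport of the fiber skeleton $\LL_{\partial^h\nsec_n}$ over a ray into $0\in\CC_{x_1\cdots x_n}$, collapsing the vanishing tori to a single cone point over the critical value. The ambient space $\CC^n$ retracts onto this cone point, so no nonzero wrapped object in $\nsec_n$ can have trivial restriction to the horizontal boundary; any microlocal sheaf killed by $\cap^h$ would have to be concentrated at the cone point, and a direct local inspection of the stalk there rules that out.

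For part (2), the strategy is to use the symmetries of the sector to pin down $s$. The sector $\nsec_n$ carries a manifest action of the symmetric group $S_n$ permuting the coordinates $x_1,\ldots,x_n$, together with an action of a rank-$(n{-}1)$ torus rescaling the $x_i$ while preserving the superpotential $x_1\cdots x_n$; both descend to the mirror and act by permutation and rescaling of the homogeneous coordinates $z_i$ on $\PP^{n-1}$. Since $s$ is canonically determined, it is equivariant for all of these symmetries, and $S_n$-invariance alone already forces $s=c\,(z_1+\cdots+z_n)$ for some scalar $c$, with the residual torus letting us normalize $c$ to $1$ as soon as we know $c\neq 0$. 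The main obstacle is therefore the single non-formal fact that $s$ is nonzero. Following the Abouzaid-Auroux perspective noted in the introduction, I expect to interpret $s$ as a count of holomorphic sections of $x_1\cdots x_n$ over a small disk containing $0$ with boundary on a Lagrangian torus in the fiber, from which positivity of disk contributions yields $s\neq 0$. An alternative, more algebraic route is to observe that vanishing of $s$ would split the monad as $\id\oplus \mu^{-1}[-1]$, producing a nontrivial orthogonal decomposition of $\cW(\nsec_n)$ that is incompatible with the connectedness of $\LL_{\nsec_n}$ together with the conservativity established above; this can be cross-checked in the base case $n=2$, where everything reduces to an explicit computation with a $1$-dimensional fiber.
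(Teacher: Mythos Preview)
The paper does not prove this lemma; it is quoted as a result of \cite{Nad-cnw}. The only indications the paper gives of \emph{how} the proof goes are in the introduction (conservativity is ``related to the fact that $\CC^n$ is contractible'') and in the proof of Lemma~\ref{lem:generic}, where genericity of $s$ is explained ``as in \cite[Theorem 5.1]{Nad-cnw} \ldots\ by observing that the monad acts as $0$ on the components of $\LL_{V_{\frac12}}$ which are mirror to $0$-dimensional toric strata.'' In other words, the Nadler argument checks each coefficient of $s$ separately by evaluating the monad $\cap\cup$ on the Lagrangian branes mirror to the $n$ torus-fixed skyscrapers of $\PP^{n-1}$: the monad annihilates these objects, so $\Cone(s)\otimes\cO_{p_i}=0$, hence $s(p_i)\neq 0$ for each $i$, and a coordinate rescaling puts $s$ in the form $z_1+\cdots+z_n$.

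Your part~(1) is fine and matches this. Your part~(2) takes a genuinely different route, replacing Nadler's pointwise computation by a global symmetry argument. The $S_n$-equivariance step is reasonable (the $S_n$ action permutes the monomials of the Hori--Vafa potential and hence the toric divisors of $\PP^{n-1}$, so it really does act by permuting the $z_i$), and it does reduce the problem to $s\neq 0$. But your two proposed ways of establishing $s\neq 0$ are both incomplete. The disk-counting interpretation is correct in spirit and is exactly the alternative the paper mentions (``$s$ is a count of holomorphic disks, and \ldots\ each of the $n$ disks it counts in $\nsec_n$ will lift''), but as you say it is only an expectation here, not a proof. Your second, ``algebraic'' alternative does not work as stated: if $s=0$ the monad is $\id\oplus\mu^{-1}[1]$, and an algebra over this is an object $X$ together with an arbitrary map $\mu^{-1}X[1]\to X$; this is \emph{not} an orthogonal decomposition of the module category, so connectedness of $\LL_{\nsec_n}$ gives no contradiction. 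If you want a symmetry-based proof you will still need one honest computation --- either the disk count, or Nadler's evaluation of $\cap\cup$ on a single torus-fixed brane --- to rule out $s=0$.
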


The above lemma establishes that the monad associated to the spherical ``horizontal cap'' functor $\cap^h$ is equivalent to the monad for the pushforward functor
\[
	i_*:\Coh(\PP^{n-2})\to \Coh(\PP^{n-1})
	\]
	for the inclusion of a generic hyperplane in $\PP^{n-1}.$ This leads directly to the main result of \cite{Nad-cnw}:

\begin{corollary}[{\cite[Theorem 1.5]{Nad-cnw}}]
  There is a commutative diagram with horizontal equivalences
  \begin{equation} \label{eq:hface-1}
  \xymatrix{
  \cW(\nsec_n)\ar[r]^-\sim\ar[d]&\Coh(H)\ar[d]\\
  \cW(\partial^h\nsec_n)\ar[r]^-\sim&\Coh(\PP^{n-1}),
  }
\end{equation}
where 
$H\cong \PP^{n-2}$ is the hypersurface 
\[
	H:=\{z_1+\cdots +z_n=0\}\subset \PP^{n-1},
\] and
the right vertical map is the pushforward along the inclusion 
$
i:H\hookrightarrow\PP^{n-1}.
$
\end{corollary}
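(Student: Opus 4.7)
My plan is to deduce the corollary by combining the monadicity statement of Lemma \ref{lem:nad-main} with an analogous monadicity on the B-side for the closed immersion of a hyperplane in $\PP^{n-1}$.

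First, since $\cap^h$ is spherical and monadic by part (1) of Lemma \ref{lem:nad-main}, Beck's monadicity theorem identifies $\cW(\nsec_n)$ with the category of modules over the monad $T:=\cap^h\cup^h$ acting on $\cW(\partial^h\nsec_n)$. Transporting along the bottom equivalence $\cW(\partial^h\nsec_n)\cong \Coh(\PP^{n-1})$ then presents $\cW(\nsec_n)$ as $T$-modules in $\Coh(\PP^{n-1})$.

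Next, I would identify $T$ with a familiar monad on the algebraic side. By Theorem \ref{thm:sylvan}, $T$ sits in a distinguished triangle $\mu^{-1}\xrightarrow{s}\id\to T$; the mirror description of $\mu^{-1}$ as $(-)\otimes\cO_{\PP^{n-1}}(-1)$ and the genericity part of Lemma \ref{lem:nad-main} identify $s$ with multiplication by the section $z_1+\cdots+z_n\in\Gamma(\PP^{n-1},\cO(1))$. The Koszul complex $0\to\cO(-1)\xrightarrow{s}\cO\to\cO_H\to 0$ then shows that as an endofunctor of $\Coh(\PP^{n-1})$, $T$ is $(-)\otimes i_*\cO_H$. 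On the B-side, $i:H\hookrightarrow\PP^{n-1}$ is a closed immersion, so the pushforward $i_*$ is a conservative right adjoint to $i^*$ and is monadic; by the projection formula its monad $i_*i^*$ is also $(-)\otimes i_*\cO_H$. Combining the two monadic presentations yields a chain of equivalences
\[
\cW(\nsec_n)\cong\Coh(\PP^{n-1})^T\cong\Coh(\PP^{n-1})^{i_*i^*}\cong\Coh(H),
\]
giving the top horizontal arrow. Commutativity of the square is then automatic: under both monadic identifications, the vertical arrows $\cap^h$ and $i_*$ correspond to the forgetful functor from modules over the monad to the underlying category.

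The main obstacle is matching the full monad structures on both sides, not merely the underlying endofunctors. To handle this I would argue that the algebra structure on the Koszul cone of a regular section of a line bundle is essentially unique --- it agrees with the derived algebra structure on the structure sheaf of its zero locus --- so any two monads on $\Coh(\PP^{n-1})$ whose underlying functor is $(-)\otimes i_*\cO_H$ and which receive a compatible unit from $\id$ are canonically equivalent. Equivalently, following \cite{Nad-cnw}, one can sidestep the abstract comparison by exhibiting generators of $\cW(\nsec_n)$ whose images under $\cap^h$ generate the image of $i_*$, computing morphism spaces on both sides, and invoking monadicity only to propagate the equivalence from generators to the entire category.
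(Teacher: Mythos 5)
Your proposal is correct and follows essentially the same route as the paper, which derives this corollary from Lemma \ref{lem:nad-main} precisely by identifying the monad of the spherical, monadic cap functor $\cap^h\cup^h$ with the monad $i_*i^*\cong(-)\otimes i_*\cO_H$ of the hyperplane inclusion and then passing to module categories. Your extra care about matching the full monad structures (not just the underlying endofunctors) is a real subtlety that the paper elides by citing \cite{Nad-cnw}, and either of your proposed resolutions is consistent with how that reference handles it.
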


These equivalences form one face of a commutative cube, but we will be more interested in the other face, which can be obtained from the diagram \eqref{eq:hface-1} by restricting to the vertical boundary on the A-side, and restricting to the toric boundary on the B-side:

\begin{corollary}\label{cor:nsec-bdry}
  There is a commutative diagram with horizontal equivalences
  \begin{equation} \label{eq:hvface-1}
  \xymatrix{
  \cW(\partial^v\nsec_n)\ar[r]^-\sim\ar[d]&\Coh(H\cap \partial\PP^{n-1})\ar[d]\\
  \cW(\partial^{2}\nsec_n)\ar[r]^-\sim&\Coh(\partial\PP^{n-1}).
  }
\end{equation}
\end{corollary}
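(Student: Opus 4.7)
The plan is to run the strategy of Lemma \ref{lem:nad-main} one dimension down, applying it to the cap functor $\cW(\partial^v\nsec_n)\to\cW(\partial^2\nsec_n)$ induced by the cornered-sectorial structure of $\nsec_n$. I will first establish the bottom equivalence of \eqref{eq:hvface-1}, then the top, and finally the commutativity of the square.

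First I would identify the bottom equivalence $\cW(\partial^2\nsec_n)\cong \Coh(\partial\PP^{n-1})$ as an instance of \eqref{thm:ccc}: by construction of $\nsec_n$, the corner skeleton is $\partial\LL_{\Sigma_{\PP^{n-1}}}$, which by Example \ref{ex:pn-bdryskel} is the FLTZ skeleton for the toric boundary $\partial\PP^{n-1}$. For the top equivalence, I would consider the Orlov cap functor $\cap^v\colon \cW(\partial^v\nsec_n)\to \cW(\partial^2\nsec_n)$ coming from the boundary stop of $\partial^v\nsec_n$ (a fiber of $x_1\cdots x_n$ supplying the sectorial structure on the vertical boundary). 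By Theorem \ref{thm:sylvan} this cap is spherical, with monodromy automorphism $\mu^{-1}$ induced by clockwise rotation around $0$ in the $x_1\cdots x_n$-plane. Since this monodromy is the restriction of the monodromy appearing in Lemma \ref{lem:nad-main} to the toric boundary, under the identification established above it becomes the functor $-\otimes \cO_{\partial\PP^{n-1}}(-1)$; likewise, the spherical natural transformation $s\colon \mu^{-1}\to\id$ is the restriction to $\partial\PP^{n-1}$ of the generic section $z_1+\cdots+z_n\in \Gamma(\PP^{n-1},\cO(1))$ of Lemma \ref{lem:nad-main}(2). Since this section remains generic on each toric stratum of $\partial\PP^{n-1}$ (none of the partial sums $\sum_{i\notin I} z_i$ vanishes identically on the corresponding stratum $\{z_i=0\colon i\in I\}$), the monad of $\cap^v$ is identified with tensoring with $\cO_{H\cap\partial\PP^{n-1}}$. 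An application of Barr--Beck, using the same conservativity argument as in \cite[\S5]{Nad-cnw} (the skeleton of $\partial^v\nsec_n$ is built from the skeleton of $\partial^2\nsec_n$ by parallel transport, so no nonzero object is killed by the cap), then yields $\cW(\partial^v\nsec_n)\cong \Coh(H\cap \partial\PP^{n-1})$.

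Commutativity of the square in \eqref{eq:hvface-1} I would obtain by viewing it as the restriction to the toric boundary of the square \eqref{eq:hface-1}: the cap functor on the vertical face is the restriction of the cap functor on $\nsec_n$, while the right-hand pushforward along $H\cap \partial\PP^{n-1}\hookrightarrow \partial\PP^{n-1}$ is the base change of $i\colon H\hookrightarrow\PP^{n-1}$ along $\partial\PP^{n-1}\hookrightarrow\PP^{n-1}$; naturality of the mirror identifications with respect to boundary restriction then gives the commuting square. The main obstacle is the monadicity step: one must ensure that the restricted section $s|_{\partial\PP^{n-1}}$ is generic enough on each stratum for Barr--Beck to apply, and that the conservativity of the cap survives the passage to the corner. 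A cleaner way to proceed, if one prefers to avoid reproving monadicity, is to promote \eqref{eq:hface-1} to a commutative cube of sectors and their mirrors and to extract \eqref{eq:hvface-1} as the ``toric boundary face'' of that cube; the remaining check is then the compatibility of Orlov cups/caps with the restriction to the vertical boundary, which is built into the cornered Liouville sectorial formalism of Example \ref{ex:orlov-triple}.
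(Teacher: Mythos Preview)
Your proposal is correct, but your primary route differs from the paper's. The paper obtains \eqref{eq:hvface-1} in one line: it is simply the restriction of the already-established square \eqref{eq:hface-1} to the vertical boundary on the A-side and to the toric boundary on the B-side (this is exactly the ``cleaner way'' you sketch in your final paragraph). Your main argument instead re-runs the monadicity argument of Lemma~\ref{lem:nad-main} directly for the vertical-boundary cap $\cap^v$, checking that the monodromy and section restrict as expected and that conservativity survives. This is valid and more self-contained, but it duplicates work already packaged in \eqref{eq:hface-1}; the paper's approach is more economical precisely because conservativity and genericity have already been established upstairs. The paper also notes (in the Remark following the corollary) a third route --- gluing lower-dimensional copies of \eqref{eq:hface-1} along the sectorial cover of Example~\ref{ex:pn-bdryskel} --- which is different again from your monadicity argument.
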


\begin{remark}
	Instead of taking the diagram \eqref{eq:hvface-1} as obtained from \eqref{eq:hface-1} by restriction, one could also obtain \eqref{eq:hvface-1} by gluing together lower-dimensional copies of the equivalence from \eqref{eq:hface-1}, by extending the sectorial cover described in Example \ref{ex:pn-bdryskel} to a sectorial cover $\nsec_n.$
\end{remark}

\subsection{Matrix factorizations}\label{sec:defthry1}
We will now reinterpret the categorical computation described above, so that we can understand the category $\cW(\partial^v \nsec_n)$ as a matrix factorization category. 

Traditionally, the matrix factorization category $\MF(X,f)$ is defined from the input data of a scheme $X$ and a global function $f\in \cO(X).$ As explained in \cite[Appendix]{Teleman-morsebott} and \cite[\S 5]{Preygel}, this category can be understood as a deformation of the 2-periodicized category $\Coh(X)_{\ZZ/2}$ by the Hochschild class $\beta f\in HH^2(\Coh(X)_{\ZZ/2})$, where $\beta$ is the 2-periodicity element. This construction can be performed for a more general category $\cC$ with an element $f\in HH^0(\cC)$, and indeed, we would like to discuss this construction in general, without necessarily assuming the category $\cC$ is a category of coherent sheaves (although ultimately the categories considered below will be of this form). We therefore make the following definition:

\begin{definition}\label{def:mfax-gen}
	Let $\cC$ be a category and $f\in HH^0(\cC)$ specifying deformation class $\beta f\in HH^2(\cC_{\ZZ/2}).$ We write $\MF(\cC,f)$ for the 2-periodic category obtained from this deformation: namely, $\MF(\cC,f)$ has objects given by 2-periodic complexes $c_0\rightleftarrows c_1$ whose differentials square to the respective images of $f$ in $\End_\cC(c_i),$ and morphisms given by maps of complexes.

	When $\cC=\Coh(X)$ and $f\in \cO(X)$, we will abbreviate this category as $\MF(X,f).$
\end{definition}


If $X$ is smooth and $f\in \cO(X),$
then the category $\MF(\Coh(X),f)$ described above is equivalent to the traditional category
of matrix factorizations of $f$ on $X$, justifying our notation.
However, even if $X$ is not smooth, we can nevertheless relate $\MF(\Coh(X),f)$ to a traditional matrix factorization category, using Orlov's equivalence from \cite{Or-eq}, which we can phrase as follows:
\begin{lemma}[\cite{Or-eq}]\label{lem:orlov-1}
	Let $X$ be a hypersurface in a smooth stack $Y$ cut out by a function $f\in\cO(Y).$ Suppose moreover that $W^\vee\in\cO(X)$ is the restriction to $X$ of a function $g$ on $Y$ with no nonzero critical values. Then the category $\MF(\Coh(X),W^\vee)$ defined in Definition~\ref{def:mfax-gen} is equivalent to the category $\MF(\CC_t\times Y,g+tf)$ of matrix factorizations on $Y$ for the function $g+tf.$
  \label{lem:tradmf}
\end{lemma}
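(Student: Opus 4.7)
The plan is to combine Orlov's ``linear Koszul'' identification of $\Coh(X)$ with a matrix factorization category on $\CC_t\times Y$ together with the elementary observation that iterated curved deformations add. I first identify $\Coh(X)$ with a matrix factorization category, and then interpret adding $W^\vee$ on the left as adding $g$ to the superpotential on the right.

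The first step is to invoke the theorem of Isik--Shipman (a close relative of Orlov's original equivalence in \cite{Or-eq}): for $X=\{f=0\}$ a hypersurface in a smooth stack $Y$, there is an equivalence
\[
\Phi:\Coh(X)\isom \MF(\CC_t\times Y,\, tf),
\]
where on the right we use the 2-periodic matrix factorization category of Definition \ref{def:mfax-gen}. The functor $\Phi$ sends a sheaf $\cF$ on $X$ to the curved complex built from a projective resolution over $\cO_Y$, with the differential carrying multiplication by $f$ in one direction and by $t$ in the other. What distinguishes this equivalence from the classical Orlov equivalence $\MF(Y,f)\simeq D^{sg}(X)$ is precisely the extra linear variable $t$, which promotes the singularity category to $\Coh(X)$ itself.

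Next, I would transport the deformation class. Since $W^\vee = g|_X$ is the restriction of a function $g\in \cO(Y)$, its image under $\Phi$ as an element of $HH^0$ is the central element induced by multiplication by $g$ on $\cO_{\CC_t\times Y}$. Applying the $\MF$ construction of Definition \ref{def:mfax-gen} to both sides, and using the formal fact that iterated curved deformations compose --- deforming $\MF(Z,h)$ further by a central function $h'$ yields $\MF(Z,h+h')$, since on both sides the underlying $\ZZ/2$-graded modules are the same and the curvatures add --- we obtain
\[
\MF(\Coh(X),\,W^\vee)\;\simeq\;\MF\!\bigl(\MF(\CC_t\times Y,\, tf),\, g\bigr)\;\simeq\;\MF(\CC_t\times Y,\, g+tf),
\]
which is the claimed equivalence.

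The main obstacle is to verify the support aspect of the Isik--Shipman equivalence in this generality and to ensure that the iterated-deformation identity holds at the level of $\infty$-categories rather than just formally. The hypothesis that $g$ has no nonzero critical values is precisely what is needed: it guarantees that the critical locus of $g+tf$ on $\CC_t\times Y$ sits inside $\{t=0\}\times X$ and is proper over the relevant critical value, so that the resulting matrix factorization category is of the usual compactly generated type rather than a more general curved dg category, and so the composed equivalence is well-behaved. Once these support considerations are pinned down, the remainder reduces to the Hochschild-class bookkeeping outlined above.
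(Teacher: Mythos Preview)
The paper does not actually prove this lemma: it is stated with the citation \cite{Or-eq} and no proof environment follows. So there is no ``paper's own proof'' to compare your proposal against; the lemma is simply imported as a known result.

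Evaluated on its own terms, your outline is the standard and correct route. The two ingredients you name --- the Isik--Shipman equivalence $\Coh(X)\simeq \MF(\CC_t\times Y, tf)$ for a hypersurface $X=\{f=0\}$ in smooth $Y$, and the additivity of curvatures $\MF(\MF(Z,h),h')\simeq \MF(Z,h+h')$ --- are exactly what is needed, and your identification of $W^\vee\in HH^0(\Coh(X))$ with multiplication by $g$ on the matrix factorization side follows from the $\cO_Y$-linearity of the Isik--Shipman equivalence. One small point: your explanation of the hypothesis that $g$ have no nonzero critical values is slightly off-target. The issue is not really support or properness of the critical locus; it is that without this hypothesis the category $\MF(\CC_t\times Y, g+tf)$ may differ from the one you construct, because matrix factorization categories are supported on the zero fiber and see only the singularity category there. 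If $g$ had a nonzero critical value $c$, then $g+tf$ would have interesting behavior over $c$ which is invisible to $\MF(\Coh(X),W^\vee)$, so the two sides would not match. With the hypothesis in place, all critical contributions are concentrated at the zero fiber and your argument goes through.
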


\begin{example}
  Let $X=\partial\CC^n\subset \CC^n = Y.$ Then the above lemma gives us an equivalence
  \begin{equation}\label{eq:mainex-def}
		\MF(\Coh(\partial\CC^n/\Gv),W^\vee)\cong \MF^\Gv(\CC_t\times \CC^n, W^\vee + tx_1\cdots x_n).
\end{equation}
Note that the undeformed category $\Coh^\Gv(\partial \CC^n)_{\ZZ/2}=\MF^\Gv(\Coh(\partial \CC^n),0)$ is equivalent to the matrix factorization category $\MF^\Gv(\CC_t\times \CC^n,tx_1\cdots x_n).$ The category \eqref{eq:mainex-def} is related to this one as a deformation by $W^\vee,$ and the main result of this paper will be to see that deformation in symplectic geometry.
\end{example}

Lemma \ref{lem:orlov-1} remains true in a twisted form,
when the function $f$ defining $X$ exists only locally.
In the case where $g=0,$ this reads as follows:

\begin{lemma}
	Let $X$ be a hypersurface in a smooth stack $Y$ cut out by a section $s\in\Gamma(Y,\cL^{-1})$ for the inverse of some line bundle $\cL$ on $Y$. Write $\widetilde{s}\in\cO(\Tot(\cL))$ for the function on the total space of $\cL$ obtained by extending $s$ to a function linear on fibers of $\cL$. Then there is an equivalence of 2-periodic dg-categories
	\begin{equation}\label{eq:twisted-orlov}
	\Coh(X)_{\ZZ/2}\cong \MF(\Coh(\Tot(\cL)),\widetilde{s})
\end{equation}
between the 2-periodicized category $\Coh(X)_{\ZZ/2}$ of coherent sheaves on $X$ and the category of matrix factorizations for $\widetilde{s}$ on $\Tot(\cL).$
  \label{lem:knor-gen}
\end{lemma}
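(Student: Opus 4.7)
The plan is to bootstrap this twisted statement from the untwisted Orlov equivalence of Lemma \ref{lem:orlov-1} by working Zariski-locally on $Y$ and then invoking descent for both sides.

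I would begin by unpacking $\widetilde{s}$: using the canonical evaluation pairing $\cL\otimes\cL^{-1}\to\cO_Y$, the section $s\in\Gamma(Y,\cL^{-1})$ pairs linearly with fiber coordinates of $\Tot(\cL)$ to give $\widetilde{s}(y,v)=\langle s(y),v\rangle$, a function of weight $1$ under the fiber-scaling $\Gm$-action. Assuming $s$ is transverse to the zero section (so that $X$ is a smooth hypersurface in $Y$), a direct computation of partial derivatives shows that the critical locus of $\widetilde{s}$ over the critical value $0$ is precisely the zero-section copy of $X$ inside $\Tot(\cL)$.

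Next, I would choose a Zariski cover $\{U_\alpha\}$ of $Y$ trivializing $\cL$. On each $U=U_\alpha$, the trivialization $\cL|_U\simeq U\times\AA^1$ identifies $\Tot(\cL|_U)\simeq U\times\AA^1_t$, the section $s|_U$ with a function $s_U\in\cO(U)$, and $\widetilde{s}|_U$ with $t\cdot s_U$. Applying Lemma \ref{lem:orlov-1} with ambient smooth stack $U$, hypersurface-defining function $s_U$, and vanishing superpotential $g=0$ then yields a canonical equivalence
\begin{equation*}
	\Coh(X|_U)_{\ZZ/2}\;\simeq\;\MF(U\times\AA^1_t,\,t\cdot s_U)\;=\;\MF(\Tot(\cL|_U),\,\widetilde{s}|_U).
\end{equation*}
These local equivalences are natural under restriction to smaller opens, since Orlov's construction commutes with base change along open immersions.

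Finally, I would globalize by descent. The presheaf $U\mapsto\Coh(X|_U)_{\ZZ/2}$ is manifestly a sheaf of dg-categories on $Y$, and the presheaf $U\mapsto\MF(\Tot(\cL|_U),\widetilde{s}|_U)$ is as well, because pushforward along the affine projection $\Tot(\cL)\to Y$ realizes it as a sheaf of $\cO_Y$-linear dg-categories over $\CC((\beta))$. The transition functions of $\cL$ act on the fiber coordinate $t$ by a unit in $\cO(U\cap U')$, and Orlov's equivalence intertwines this rescaling, so the local equivalences glue to a global one. The main technical obstacle is verifying descent in the dg/higher-categorical sense, especially when $Y$ is a Deligne--Mumford stack (as in the $\Gv$-quotients relevant to this paper); in practice I would cite an existing ``global Orlov'' or ``Kn\"orrer periodicity in families'' result (for instance in the work of Isik, Shipman, or Hirano) that already packages this descent, rather than reprove it from scratch.
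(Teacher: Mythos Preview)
Your proposal is correct and follows essentially the same approach as the paper: trivialize $\cL$ on a Zariski cover of $Y$, apply Lemma~\ref{lem:orlov-1} with $g=0$ locally, and glue using Zariski descent for matrix factorizations. The paper's proof is just a three-sentence sketch of exactly this; your version is more detailed (the critical-locus remark and transversality assumption are extraneous and not part of the hypotheses, but they play no role in your actual argument).
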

\begin{proof}
	The category of matrix factorizations is local in the Zariski topology. We may therefore compute the right-hand side of \eqref{eq:twisted-orlov} on the preimages of an open cover of $Y$ where the section is given by an actual function. Locally on this cover, we are then reduced to the situation of Lemma \ref{lem:orlov-1}.
\end{proof}

We can apply Lemma \ref{lem:knor-gen} to understand the categories discussed in the previous section as deformations.

\begin{example}\label{ex:cohasdef}
	Let $Y=\PP^{n-1}_\Gv,$ and $W^\vee\in \Gamma(\cO_{\PP^{n-1}_\Gv}(1))$ a generic section.
  Then there is an equivalence
  \begin{equation}
    \Coh(\{W^\vee = 0\})_{\ZZ/2}
    \cong \MF(\Tot(\cO_{\PP^{n-1}}(-1)),\widetilde{W^\vee})
    \label{eq:eqvt-defex1}
  \end{equation}
	between the 2-periodicized category of coherent sheaves on the hypersurface in $\PP^{n-1}_\Gv$ defined by $W^\vee$ and the matrix factorization category of the fiberwise-linear extension of $W^\vee$ to a function on the total space of $\cO_{\PP^{n-1}_\Gv}(-1)$.
%
\end{example}

\subsection{Deforming the Fukaya category}\label{subsec:conj}
We now formalize the main calculation of \cite{Nad-cnw} into a general procedure for computing the Fukaya-Seidel category of an LG model with a single critical value. We state this procedure as Conjecture \ref{conj:main} although, as we will explain, it is expected to hold in general and easy to prove in several of the cases of interest to us.

	Let $(X,f:X\to \CC)$ be a Landau-Ginzburg Weinstein sector where $f$ has no critical values outside the origin $0\in\CC,$ and let $F$ be a general fiber of $f.$

	Recall that the cap functor associated to LG model $(X,f)$ is a spherical functor
	\[
		\cap:\cW(X,f)\to \cW(F),
	\]
	with left adjoint $\cup$, whose monad $\cap \cup$ can be presented as the cone on a natural transformation from the clockwise monodromy automorphism to the identity on $\cW(F):$
	\[
		\cap\cup = \Cone(\mu^{-1} \xrightarrow{s} \id_{\cW(F)}).
	\]

	The natural transformation $s$ can be treated as an element of $HH^0(\cW(\Xc,f|_{\Xc}))$: as we shall see in \S\ref{subsec:veryaffine}, the category $\cW(\Xc,f|_{\Xc})$ can be understood as the category whose objects are a pair $(L, \mu L \xrightarrow{\nu}L),$ where $L$ is an element in (an Ind-completion of) $\cW(F)$ and $\nu$ is a $\mu$-twisted endomorphism of $L.$
	Therefore, we may define an element of $HH^0(\cW(\Xc,f|_{\Xc}))$ which acts on an object $(L,\nu)$ by the composition
	\[
		\xymatrix{L\ar[r]^-{s_{\mu L}}& \mu L\ar[r]^-{\nu}&L.}
	\]
	We denote this element of $HH^0(\cW(\Xc,f|_{\Xc}))$ by $\tils.$

	\begin{conjecture}\label{conj:main}
		There is an equivalence
		\[
			\cW(X,f) \cong \MF(\cW(\Xc,f|_{\Xc}), \tils).
		\]
	\end{conjecture}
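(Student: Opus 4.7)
The plan is to identify both sides with the category of modules over the monad $T = \cap \cup$ on $\cW(F)$, using a combination of sectorial codescent (Theorem \ref{thm:codescent}), Sylvan's presentation of the cup-cap monad (Theorem \ref{thm:sylvan}), and the deformation-theoretic description of matrix factorization categories (Definition \ref{def:mfax-gen}).

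First, I would cover $\CC$ by a small disk $\DD$ around $0$ and its complement, and lift this to a sectorial cover $X = f^{-1}(\DD) \cup \Xc$. Sectorial codescent presents $\cW(X, f)$ as the pushout of $\cW(\Xc, f|_\Xc)$ and $\cW(f^{-1}(\DD), f|_{f^{-1}(\DD)})$ along a collar sector equivalent to $\cW(F)$. The spherical cup-cap adjunction identifies the disk sector $\cW(f^{-1}(\DD), f|_{f^{-1}(\DD)})$ as monadic over $\cW(F)$ (this is the main technical step; see below), so Barr--Beck equates this sector with $\Mod_T \cW(F)$, where $T = \Cone(\mu^{-1} \xrightarrow{s} \id)$. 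Likewise, $\cW(\Xc, f|_\Xc)$ is a perverse schober on $\CC^\times$ whose lollipop presentation (analogous to Proposition \ref{prop:leftskel-va}) identifies it with the category of pairs $(L, \nu\colon \mu L \to L)$ of $\mu$-twisted objects.

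The comparison with $\MF(\cW(\Xc, f|_\Xc), \tils)$ then proceeds by unwinding the $T$-module structure. A $T$-module structure on $L \in \cW(F)$ consists of a map $T L \to L$ satisfying associativity; using the defining triangle $\mu^{-1} L \xrightarrow{s_L} L \to T L$, such a structure decomposes as a $\mu$-twisted endomorphism $\nu\colon \mu L \to L$ together with a null-homotopy of the composite $\nu \circ s_{\mu L}$, which is exactly $\tils$ acting on $(L, \nu)$. After collapsing to $\ZZ/2$-gradings, such a null-homotopy is the data of a matrix factorization differential $d$ with $d^2 = \tils$, and the deformation-theoretic interpretation of $\MF$ via Hochschild cocycles recalled in Lemma \ref{lem:orlov-1} and Example \ref{ex:cohasdef} promotes this identification to a Morita equivalence of the whole category. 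One still needs to check compatibility with morphisms and tensor structure, which is formal given that $\tils$ is central in $\cW(\Xc, f|_\Xc)$ by construction.

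The hardest step is proving monadicity of the cup-cap adjunction for the disk sector, i.e., conservativity of $\cap\colon \cW(f^{-1}(\DD), f|_{f^{-1}(\DD)}) \to \cW(F)$. This requires ruling out Lagrangian objects supported purely over the critical fiber $f^{-1}(0)$. In the normal-crossings case treated by \cite{Nad-cnw}, conservativity is verified by exhibiting explicit Lefschetz-thimble generators whose images under $\cap$ already generate $\cW(F)$; a similar check succeeds in each geometric situation that arises in this paper. A truly general argument, applying to arbitrary LG models with a single critical value and Weinstein fibers, would require global generation results for Weinstein sectors from \cite{GPS3} combined with the fact that the critical locus is Stein, and is outside the scope of the present work --- which is why the statement is posed as Conjecture \ref{conj:main} rather than proved outright.
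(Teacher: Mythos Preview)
This statement is posed in the paper as a conjecture, not a theorem; the paper gives no general proof, only the special case where $\cap$ is conservative (Lemma~\ref{lem:conservative}), established via Barr--Beck monadicity exactly as you propose. Your final paragraph correctly identifies conservativity of the cap functor as the missing ingredient and matches the paper's own assessment of why the statement remains conjectural. In that sense your outline is aligned with what the paper actually does.

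Two issues with the outline itself. First, the sectorial codescent step does no work: since $f$ has its unique critical value at $0$, the sector $(f^{-1}(\DD),f)$ is already equivalent to $(X,f)$, and the overlap $f^{-1}(\DD\setminus\{0\})$ is a fibration over a punctured disk, not a collar $F\times T^*I$; its partially wrapped category is (a version of) the lollipop category $\cW(\Xc,f|_{\Xc})$, not $\cW(F)$. So the pushout you write down does not separate the two sides of the conjecture --- all the content lives in the monadic identification. Second, your unpacking of a $T$-module structure is not right: once the unit condition is imposed, a $T$-algebra structure on $L$ amounts to a nullhomotopy of $s_L\colon\mu^{-1}L\to L$ (plus associativity), not an auxiliary twisted endomorphism $\nu$. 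The correct bridge from $T$-modules to $\MF(\cW(\Xc,f|_{\Xc}),\tils)$ is a Kn\"orrer-type argument in the spirit of Lemma~\ref{lem:knor-gen}: regard $\cW(\Xc,f|_{\Xc})$ as ``coherent sheaves on the total space of $\mu^{-1}$,'' so that $\tils$ is the fiberwise-linear function determined by $s$ and its matrix factorizations compute modules over $\Cone(\mu^{-1}\xrightarrow{s}\id)=T$. The paper is equally terse at this step, asserting the identification and citing the hom-space description from \cite{abouzaid-auroux}.
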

	As explained in \cite[\S 1.3]{abouzaid-auroux}, the natural transformation $s$ is a count of holomorphic disks living over a disk containing 0 in the base of the LG model. Conjecture \ref{conj:main} would therefore follow immediately from a sufficiently robust theory of deformations of Fukaya categories by holomorphic disks: objects in the category $\cW(\Xc,f|_{\Xc})$ (at least those avoiding a neighborhood of the deleted fiber) ought to give objects of $\cW(X,f)$ (possibly after being equipped with a weak bounding cochain), with the $A_\infty$ structure of the category deformed by the new count of disks, encoded by $s$, passing through the deleted fiber.

	There are technical obstacles to making the discussion of the previous paragraph rigorous, but it is not difficult to establish Conjecture \ref{conj:main} in some generality, as we now explain.
	\begin{lemma}\label{lem:conservative}
		Suppose the cap functor $\cap:\cW(X,f)\to \cW(F)$ is conservative. Then Conjecture \ref{conj:main} holds. More generally, Conjecture \ref{conj:main} holds for the image of the cup functor $\cup:\cW(F)\to\cW(X,f)$: If we write $\cC$ for the image of the cup functor in $\cW(X,f)$ and $\mathring{\cC}$ for the image in $\cW(\Xc,f|_{\Xc})$ of the corresponding cup functor $\cW(F)\to \cW(\Xc,f|_{\Xc}),$ then $\cC\cong \MF(\mathring{\cC},\tils).$
	\end{lemma}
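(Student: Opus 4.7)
\emph{Plan.} The strategy is to present both sides of Conjecture~\ref{conj:main} as module categories over monads on $\cW(F)$ via Barr--Beck--Lurie, then match those presentations through a deformation-theoretic argument. The cup-cap adjunction $\cup\dashv\cap$ is spherical by Theorem~\ref{thm:sylvan}, so $\cap$ itself admits a right adjoint (the inverse spherical cotwist) and therefore preserves all small colimits. Combined with the conservativity hypothesis, Barr--Beck--Lurie monadicity yields
\[
\cW(X,f)\simeq\mathrm{Mod}_T(\cW(F)),\qquad T=\cap\circ\cup\simeq \Cone\bigl(\mu^{-1}\xrightarrow{s}\id\bigr).
\]

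\emph{The undeformed side and monad matching.} The same reasoning applies to the spherical cup-cap adjunction on $(\Xc,f|_{\Xc})$, whose natural transformation $s^\circ\colon\mu^{-1}\to\id$ vanishes since the fiber over $0$ supporting the disk count has been deleted. Hence $\cW(\Xc,f|_{\Xc})\simeq\mathrm{Mod}_{T^\circ}(\cW(F))$ with $T^\circ=\Cone(\mu^{-1}\xrightarrow{0}\id)$, and the $T^\circ$-module structure on an object $L$ unwinds to the data of a morphism $\nu\colon\mu L\to L$ as in the excerpt. The monads $T$ and $T^\circ$ share the same underlying endofunctor; they differ only in the component of the multiplication that incorporates $s$. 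A direct comparison of their bar constructions shows that the discrepancy, applied to a $T^\circ$-module $(L,\nu)$, is precisely the composite $L\xrightarrow{s_{\mu L}}\mu L\xrightarrow{\nu}L$, which by the definition in \S\ref{subsec:conj} is the action of $\tils\in HH^0(\cW(\Xc,f|_{\Xc}))$. In other words, $T$ is the $\tils$-curving of $T^\circ$, and the standard equivalence between modules over a $\ZZ/2$-graded curved deformation and matrix factorizations of the curvature (applied in the ambient category $\mathrm{Mod}_{T^\circ}(\cW(F))$) gives
\[
\mathrm{Mod}_T(\cW(F))\simeq \MF\bigl(\mathrm{Mod}_{T^\circ}(\cW(F)),\,\tils\bigr)=\MF\bigl(\cW(\Xc,f|_{\Xc}),\,\tils\bigr),
\]
which together with the Barr--Beck identification proves Conjecture~\ref{conj:main}.

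\emph{Image version and main obstacle.} Without the conservativity assumption, the cup functor need not be essentially surjective, but the argument restricts: the essential image $\cC\subset\cW(X,f)$ of $\cup$ is tautologically preserved by $\cup\cap$, so the corestricted adjunction $\cup\colon\cW(F)\rightleftarrows\cC$ satisfies the monadicity hypothesis automatically, and the same identification of monads yields $\cC\simeq\MF(\mathring{\cC},\tils)$. The one genuinely delicate step is the assertion that the multiplicative curving distinguishing $T$ from $T^\circ$ is exactly the geometric class $\tils$: this requires writing out the spherical $A_\infty$-monad structure from \cite{Sylvan-orlov} at the level of bar resolutions and comparing it against the intrinsic definition of $\tils$ in~\S\ref{subsec:conj}. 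Modulo this verification, both Barr--Beck monadicity and the passage from curved modules to matrix factorizations are formal.
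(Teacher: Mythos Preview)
Your proposal is correct and follows the same monadicity strategy as the paper: apply Barr--Beck to the conservative cap functor, identify the monad as $\Cone(\mu^{-1}\xrightarrow{s}\id)$, and recognize the resulting module category as the matrix factorization deformation. The paper's own proof is much terser --- it simply asserts that the category of $\cap\cup$-algebras is $\MF(\cW(\Xc,f|_{\Xc}),\tils)$, citing \cite{Nad-cnw} and \cite{abouzaid-auroux} --- whereas you spell out the comparison of the two monads $T$ and $T^\circ$ and interpret the passage from one to the other as a curving by $\tils$.

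One point to tighten: when you write ``the same reasoning applies'' to $(\Xc,f|_{\Xc})$ and conclude $\cW(\Xc,f|_{\Xc})\simeq\mathrm{Mod}_{T^\circ}(\cW(F))$, you are implicitly invoking conservativity of the cap functor for the \emph{punctured} sector, which is not part of the hypothesis of the lemma. The paper sidesteps this by having already established the description of $\cW(\Xc,f|_{\Xc})$ as pairs $(L,\nu\colon\mu L\to L)$ via the sectorial (lollipop) presentation in Proposition~\ref{prop:leftskel-va}, independently of any monadicity argument. You should either cite that description directly, or observe that for a fibration over $\CC^\times$ with no critical values the cap functor is conservative for elementary reasons (an object with zero underlying $\cW(F)$-object is zero). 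Your identification of the delicate step --- matching the multiplicative curving with the geometric class $\tils$ --- is exactly right, and is also the point the paper leaves implicit.
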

	\begin{proof} If the cap functor is conservative, we may compute the category $\cW(X,f)$ monadically, as the category of $\cap\cup$-algebras in $\Ind(\cW(F))$: 
		by the presentation of the monad as $\cap\cup = \Cone(\mu^{-1}\to \id_{\cW(F)}),$ we see that this category is precisely $\MF(\cW(\Xc,f|_{\Xc}), \tils).$ Equivalently (by a version of Lemma \ref{lem:knor-gen}), as explained in \cite[\S 1.3]{abouzaid-auroux}, this is the category with the same objects as $\cW(F)$ but with Hom between $L$ and $L'$ given by 
		\[\Hom(L,L')=\Cone(\Hom_{\cW(F)}(L,\mu^{-1}(L'))\to \Hom_{\cW(F)}(L,L')).\]

		For the second part of the lemma, we use the fact that the restriction of $\cap$ to the image of $\cup$ will be conservative.
	\end{proof}

	\begin{example}
		Let $X=\CC^n, f(x_1,\ldots,x_n) = x_1\cdots x_n.$ Then $\Xc = (\CC^\times)^n,$ and $\cW(\Xc,f|_{\Xc})\simeq \Coh((\CC^\times)^{n-1}\times \CC)$; under this isomorphism, $\tils\in \cO((\CC^\times)^{n-1}\times \CC) = \CC[z_1^\pm,\ldots,z_{n-1}^\pm,t]$ is shown in \cite{Nad-cnw} to be the function $t(z_1+\cdots + z_{n-1}).$ This calculation underlies the main theorem of \cite{Nad-cnw}, namely the mirror symmetry equivalence 
		\[
			\cW(X,f)\simeq \MF((\CC^\times)^{n-1}\times \CC, t(z_1+\cdots+z_{n-1}))\simeq \Coh\{z\in (\CC^\times)^{n-1}\mid z_1+\cdots + z_{n-1}=0\},
		\]
		where the first equivalence is of the form described in Conjecture \ref{conj:main}.
	\end{example}

	In some sense, the only obstruction to the failure of the hypothesis of Lemma \ref{lem:conservative} is the possibility that the LG model $(X,f)$ will have critical points ``at infinity'' over the zero-fiber --- i.e., that the change in the topology of the fiber over zero is not due to degeneration but rather to parts of the fiber vanishing; such contributions to the topology of $X$ will contribute to the kernel of the cap functor. (This is the phenomenon mentioned in Remark \ref{rem:zero}.) The extreme case of this situation is where the whole fiber vanishes over 0. In this case, the proof Conjecture \ref{conj:main} is trivial:
	\begin{lemma}Suppose that $X=\Xc.$ Then Conjecture \ref{conj:main} holds.
	\end{lemma}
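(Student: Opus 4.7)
Since $X=\Xc$, we have $f|_{\Xc}=f$ tautologically, and $\cW(\Xc,f|_{\Xc}) = \cW(X,f)$. The statement of Conjecture~\ref{conj:main} therefore reduces to the equivalence
\[
\cW(X,f)\;\cong\;\MF\bigl(\cW(X,f),\,\tils\bigr).
\]
The plan is to reduce this to the vanishing $\tils = 0 \in HH^0(\cW(X,f))$. Given such vanishing, Definition~\ref{def:mfax-gen} yields $\MF(\cC,0)\cong \cC_{\ZZ/2}$ for any $\cC$ (the condition $d^2=0$ becomes automatic), and in our 2-periodic convention $\cC_{\ZZ/2}\cong \cC$. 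Applied to $\cC = \cW(X,f)$, this gives the desired equivalence.

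To verify $\tils=0$, I recall that $\tils$ is built from the natural transformation $s\colon\mu^{-1}\to\id_{\cW(F)}$ in the spherical decomposition $\cap\cup = \Cone(\mu^{-1}\xrightarrow{s}\id)$: it acts on an object $(L,\nu)$ by the composition $L \xrightarrow{s_{\mu L}} \mu L \xrightarrow{\nu} L$. In particular, if $s$ is identically zero, then so is $\tils$ as a Hochschild class. To see $s=0$, I invoke the geometric interpretation of $s$ from \cite[\S 1.3]{abouzaid-auroux} already recalled in \S\ref{subsec:conj}: up to signs, $s$ counts pseudoholomorphic sections of $f$ over a small disk $D\subset\CC$ containing the origin. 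Under the hypothesis $X=\Xc$, i.e.\ $0\notin f(X)$, one may shrink $D$ so that $f^{-1}(D)=\emptyset$; the moduli space in question is then empty, and so $s=0$.

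The only real obstacle is that the disk-count interpretation of $s$ has not been made fully rigorous in the literature. In the present extreme case this is inessential: the hypothesis $X=\Xc$ forces the entire cup--cap adjunction to live above $\CC^\times\subset\CC$, so that any natural transformation $s$ extracted as the ``correction term'' near the origin must vanish. One could alternatively avoid mentioning $s$ altogether: both sides of the conjecture are $\cW(X,f)$, and the content of the statement is only the observation that $\MF(\cdot,0)$ is the 2-periodicization functor, which is the identity in our setup. In either formulation, the claim is as immediate as the paper asserts.
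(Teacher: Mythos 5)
Your proof is correct and follows essentially the same route as the paper: observe that $\cW(X,f)=\cW(\Xc,f|_{\Xc})$, argue that the section-counting transformation $s$ (hence $\tils$) vanishes, and conclude that the deformation $\MF(\cdot,0)$ is trivial in the 2-periodic setting. One small imprecision: $X=\Xc$ gives $f^{-1}(0)=\emptyset$ but not necessarily $f^{-1}(D)=\emptyset$ for a small disk $D$ (if $f$ is not proper); the correct reason the moduli space is empty is that any section over $D\ni 0$ would have to meet the empty fiber $f^{-1}(0)$ — and note that your ``alternative formulation'' does not actually avoid this, since $\MF(\cW(X,f),\tils)\cong\cW(X,f)$ still requires $\tils=0$.
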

	\begin{proof}
		In this case, the section-counting transformation $s$ is equal to 0, so that $\MF(\cW(\Xc,f|_{\Xc}),\tils)$ is just equivalent to the category $\cW(\Xc,f|_{\Xc}).$ By assumption, this is equivalent to the category $\cW(X,f).$
	\end{proof}

	A complete proof of Conjecture \ref{conj:main} would involve treating these two situations --- the image of the cup functor and the kernel of the cap functor --- on equal footing.

	\subsection{Deformation data for the Milnor fiber}
	We will want to apply Conjecture \ref{conj:main} to the left-hand sector $V_L$ of the Berglund-H\"ubsch Milnor fiber (which may be understood as an LG model with superpotential $f$).
	We will therefore need to gather together the necessary data about this situation: namely, the Fukaya category $\cW(F)$ of the fiber of $f$, together with its monodromy $\mu\in \Aut(\cW(F))$ and the natural transformation $s:\mu^{-1}\to\id_{\cW(F)}$ underlying the monad of the cup-cap adjunction for the sector $V_L.$
	
	In the basic case where $W(x_1,\ldots,x_n)=x_1+\cdots+x_n,$ this is precisely the computation accomplished in \cite{Nad-cnw}. We now recall the result in that case before generalizing it to $V_L$. The following was already stated as Lemma \ref{lem:nad-main} above, but now we reformulate it in light of Conjecture \ref{conj:main}.

	\begin{lemma}[\cite{Nad-cnw}]
		Let $X$ be the subdomain of $\{x_1+\cdots +x_n=n\}$ obtained by restricting to $\{|x_1\cdots x_n|<\epsilon\},$ and let $f:X\to\CC$ be given by $f(x_1,\ldots,x_n)  = x_1\cdots x_n,$ so $(X,f)$ is equivalent to the sector $\partial^h\nsec_n \cong P_L$; $(\Xc, f)$ is equivalent to the sector $\Pc_L$; and a fiber $F$ of $f$ is equivalent to the corner $\partial^2\nsec_n$. Then $\mu^{-1}\in \Aut(\cW(F))\cong \Aut(\Coh(\partial\PP^{n-1}))$ is given by $-\otimes \cO(-1),$ and $s:\cO(-1)\to \cO$ is a generic linear function.
	\end{lemma}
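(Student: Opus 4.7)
The plan is to derive this lemma essentially as a bookkeeping exercise combining the sectorial decomposition of \S\ref{sec:geom} with the main computation of \cite{Nad-cnw} as recorded in Lemma \ref{lem:nad-main} and Corollary \ref{cor:nsec-bdry}. The first step is to unpack the sectorial identifications. By the construction of the completed LG triple $\nsec_n = (\widehat{\CC^n}, x_1\cdots x_n, x_1+\cdots+x_n)$, the vertical boundary sector $\partial^v\nsec_n$ is by definition obtained by taking a subdomain of $\{x_1+\cdots+x_n=n\}$ on which $x_1\cdots x_n$ has no critical values beyond $0$, and equipping it with the potential $x_1\cdots x_n$. Choosing this subdomain to be $\{|x_1\cdots x_n|<\epsilon\}$ puts $(X,f)$ literally in this form (I read the appearance of $\partial^h$ in the statement as a slip for $\partial^v$, in agreement with the remark following Definition \ref{def:nsec} identifying $\partial^v\nsec_n \cong P_L$). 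Removing the central fiber gives $(\Xc,f)\cong \Pc_L$, and a smooth fiber of $f$ is the iterated intersection $F = \{x_1+\cdots+x_n=n\}\cap\{x_1\cdots x_n=c\}$, which is by construction the corner $\partial^2\nsec_n$.

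The second step is to identify $\cW(F)$ as a coherent sheaf category: this is precisely the lower horizontal equivalence in Corollary \ref{cor:nsec-bdry}, giving $\cW(F)\cong \Coh(\partial\PP^{n-1})$. The third step is to read off the monodromy and the section-counting natural transformation from the computations already done for the ambient sector $\nsec_n$. The clockwise monodromy around $0\in\CC$ in the base of $x_1\cdots x_n$ acts on all three boundary sectors compatibly. On $\cW(\partial^h\nsec_n)\cong \Coh(\PP^{n-1})$ it is identified with $-\otimes \cO_{\PP^{n-1}}(-1)$ by the cited Corollary 4.24 of \cite{Nad-cnw}. The commutativity of \eqref{eq:hvface-1} transports this identification to the corner: the monodromy automorphism $\mu^{-1}$ of $\cW(F)$ corresponds to the restriction of $-\otimes \cO(-1)$ to $\partial\PP^{n-1}$, which is what the lemma asserts. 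Similarly, the natural transformation $s:\mu^{-1}\to \id$ underlying the cup-cap monad for the sector $(X,f)$ is obtained by restricting to the horizontal boundary the analogous transformation for $\nsec_n$, which by part (2) of Lemma \ref{lem:nad-main} is a generic linear section of $\cO_{\PP^{n-1}}(1)$.

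The only genuinely substantive point to verify is that the cup-cap adjunction for the sector $(X,f)\cong \partial^v\nsec_n$ agrees, after restriction to the corner $\partial^2\nsec_n$, with the restriction of the cup-cap adjunction for the sector $\nsec_n$ cut along its horizontal boundary. This is a compatibility statement for the square of spherical functors of Example \ref{ex:orlov-triple}, and it follows from the functoriality of the wrapped Fukaya category for inclusions of Liouville sectors (Theorem \ref{thm:codescent}) together with the fact that the horizontal and vertical caps are pullbacks of the same total cap functor of $\nsec_n$. Everything beyond this reduces to transporting the already-known $\PP^{n-1}$-level data through Corollary \ref{cor:nsec-bdry}, and the genericity statement is preserved since restriction of a generic linear form on $\PP^{n-1}$ remains a generic section on $\partial\PP^{n-1}$ (in the non-stacky case considered here).
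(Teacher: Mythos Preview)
Your proposal is correct and matches the paper's approach: the paper does not give an independent proof of this lemma but presents it as a reformulation of Lemma \ref{lem:nad-main} (the prefatory sentence reads ``The following was already stated as Lemma \ref{lem:nad-main} above, but now we reformulate it in light of Conjecture \ref{conj:main}''), with the sectorial identifications coming from the discussion around Definition \ref{def:nsec} and Corollary \ref{cor:nsec-bdry}. Your catch that $\partial^h\nsec_n$ in the statement should read $\partial^v\nsec_n$ is correct and agrees with the Remark immediately following Definition \ref{def:nsec}; the explicit compatibility argument you give for the cup--cap adjunctions is more detail than the paper spells out here (though essentially the same reasoning reappears in the proofs of Lemmas \ref{lem:mon-aut} and \ref{lem:generic}).
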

	\begin{corollary}
		There is an equivalence of categories
		\[
			\cW(P_L) \cong \MF(\cW(\Pc_L), \tils)
		\]
	\end{corollary}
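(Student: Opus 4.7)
The plan is to deduce the corollary from Conjecture \ref{conj:main} applied to the Landau-Ginzburg sector $(P_L, f)$, after verifying the single hypothesis of Lemma \ref{lem:conservative} for this specific sector. The preceding lemma has already packaged all the required input data: it identifies the Fukaya category of the fiber as $\cW(F) \cong \Coh(\partial \PP^{n-1})$, the clockwise monodromy automorphism as $\mu^{-1} \cong -\otimes \cO(-1)$, and the natural transformation underlying the spherical monad as a generic linear section $s:\cO(-1) \to \cO$. Under the identification of $\cW(\Pc_L)$ as the category of pairs $(L, \nu: \mu L \to L)$ sketched in \S\ref{subsec:conj}, this $s$ determines the Hochschild class $\tils \in HH^0(\cW(\Pc_L))$.

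Next I would verify conservativity of the cap functor $\cap: \cW(P_L) \to \cW(F)$. This is the crucial step, and it is not automatic in general; however, it is exactly the content of \cite[\S 5]{Nad-cnw}, since $P_L$ is equivalent to $\partial^v \nsec_n$ and conservativity for $\nsec_n$ restricts to conservativity on the vertical boundary. Geometrically, this holds because the total space $\CC^n$ of the ambient LG model is contractible, forcing any nontrivial object of $\cW(P_L)$ to have nontrivial boundary restriction. With conservativity in hand, Lemma \ref{lem:conservative} immediately gives that Conjecture \ref{conj:main} holds for $(P_L, f)$: the category $\cW(P_L)$ is computed monadically as the category of algebras for the monad $\cap \cup = \Cone(\mu^{-1} \xrightarrow{s} \id)$, which is by definition $\MF(\cW(\Pc_L), \tils)$.

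The main potential obstacle is ensuring that the monadic description produced by Lemma \ref{lem:conservative} matches the matrix factorization description via the twisted-endomorphism presentation of $\cW(\Pc_L)$. This is essentially bookkeeping: one checks that algebras for $\cap \cup$ are the same as objects $(L, \nu)$ equipped with a null-homotopy of the composite $\mu^{-1} L \xrightarrow{s_L} L \xrightarrow{\nu^{-1}} \mu^{-1} L$, which is the data of a matrix factorization differential squaring to $\tils$. I expect no surprises here, since this identification is a direct unwinding of definitions already implicit in the discussion preceding Conjecture \ref{conj:main}, and is carried out in essentially this form in \cite[\S 1.3]{abouzaid-auroux}.
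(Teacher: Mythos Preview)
Your proposal is correct and follows essentially the same route as the paper's proof: invoke conservativity of the cap functor (established in \cite[\S 5]{Nad-cnw}) and apply Lemma \ref{lem:conservative} to conclude that Conjecture \ref{conj:main} holds for $(P_L,f)$. Your third paragraph is unnecessary, since the identification of the monadic description with the matrix factorization category is already built into the statement and proof of Lemma \ref{lem:conservative}.
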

	\begin{proof}
		The conservativity of the cap functor in this case implies (by Lemma \ref{lem:conservative}) that the conclusion of Conjecture \ref{conj:main} holds.
	\end{proof}

	Except for conservativity of the cap functor, the above statements all remain true when we generalize from $P_L$ to $V_L.$ We will describe the data $(\cW(\Vc_L),\mu\in\Aut(\cW(F)),s:\mu^{-1}\to 1)$ in the next section, where we will use it to compute the category $\cW(V_L).$

\section{Homological mirror symmetry}\label{sec:main}


Having already presented the spaces $V,\Vc$ as covered by recognizable Liouville sectors, it remains for us only to recall the calculations of the wrapped Fukaya categories of those sectors, and then to glue the resulting categories together. We begin with $\Vc$: although we already understand the Fukaya category $\cW(\Vc)$ from the results of \cite{GS17}, we give here a different presentation as preparation for the calculation of $\cW(V).$

\subsection{The very affine Milnor fiber}
\label{subsec:veryaffine}
The space $\Vc$ is an unramified $\G$-cover of $\Pc,$ and the mirror $\Vc^\vee$ is obtained from the mirror $\Pc^\vee = \partial \CC^n$ by passing to a $\G^\vee$-quotient. In other words:

\begin{proposition}[\cite{GS17}]\label{prop:vc-mir1}
  There is an equivalence of categories
  \begin{equation}\label{eq:vc-mir1}\cW(\Vc)\cong \Coh^{\G^\vee}(\partial \CC^n)
  \end{equation}
  between the wrapped Fukaya category of $\Vc$ and the category of $\G^\vee$-equivariant coherent sheaves on the toric boundary of $\CC^n.$
\end{proposition}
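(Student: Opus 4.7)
The plan is to combine the skeletal description of $\Vc$ with the equivariant version of the coherent-constructible correspondence and the microlocal/wrapped comparison. Concretely, I would first invoke Example \ref{ex:skel-vn}, which identifies $\partial \LL_{\Sigma_{\AA^n/\Gv}}$ as a Lagrangian skeleton for $\Vc$; second, I would apply Theorem \ref{thm:ccc} in its boundary form to identify wrapped microlocal sheaves on this skeleton with coherent sheaves on the toric boundary of $\CC^n/\Gv$; third, I would transport this identification to the Fukaya side using Theorem \ref{thm:gps-maincomparison}.

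First I would pin down the skeleton. Theorem \ref{thm:gs-skel} asserts that $\partial \LL_{\Sigma}$ is a skeleton for the very affine hypersurface associated to $\Sigma$, when the rays of $\Sigma$ lie on the boundary of its Newton polytope. Applied to the stacky fan $\Sigma_{\AA^n/\Gv}$ whose rays are generated by the exponent vectors of $W$, this gives a skeleton for the pants $\Pc$ lifted along the unramified $G$-cover $\rho: \Vc \to \Pc$ from Section \ref{sec:covers}. Concretely, one lifts the tailoring isotopy of Proposition \ref{prop:tailoring} along $\rho$, and the limiting skeleton is $\partial \LL_{\Sigma_{\AA^n/\Gv}}$ as described in Example \ref{ex:skel-vn}.

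Next I would apply Theorem \ref{thm:ccc} (Kuwagaki's CCC) to the fan $\Sigma_{\AA^n/\Gv}$, which produces $\wmsh(\LL_{\Sigma_{\AA^n/\Gv}}) \cong \Coh(\CC^n/\Gv)$. Restricting to the boundary Lagrangian on the left corresponds, under Kuwagaki's functor, to restriction to the toric boundary substack, giving $\wmsh(\partial \LL_{\Sigma_{\AA^n/\Gv}}) \cong \Coh(\partial \CC^n / \Gv)$. Combined with Theorem \ref{thm:gps-maincomparison}, which identifies $\wmsh^{\op}$ of the skeleton with the wrapped Fukaya category, and with the standard identification $\Coh(\partial \CC^n/\Gv) \cong \Coh^{\Gv}(\partial \CC^n)$ of coherent sheaves on the quotient stack with equivariant sheaves on the cover, this yields \eqref{eq:vc-mir1}.

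The main obstacle is making the stacky version of the CCC, and in particular its compatibility with boundary restriction, precise; but this is really the content of \cite{GS17}, which handles exactly the case of hypersurfaces in $(\CC^\times)^n$ whose tropicalization is simplicial and has its vertices on the Newton polytope. An alternative and more explicit route is to work $\Gv$-equivariantly throughout: Kuwagaki's theorem for the smooth fan $\Sigma_{\AA^n}$ gives $\wmsh(\partial \LL_{\Sigma_{\AA^n}}) \cong \Coh(\partial \CC^n)$, both sides carry compatible $\Gv$-actions induced by the deck transformations of $\rho$ on the A-side and by the natural $\Gv$-action on the mirror stack on the B-side, and passing to $\Gv$-equivariant objects gives the stated equivalence. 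Either way, no input beyond the prior work of \cite{GS17,Ku,NS20,GPS3} is required.
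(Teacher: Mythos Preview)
Your proposal is correct and is essentially what the paper does: the paper gives no proof of its own for this proposition but simply attributes it to \cite{GS17}, adding only the one-sentence gloss that the argument ``proceeds by matching the closed cover of the stack $\CC^n/\Gv$ by toric orbit closures to a Liouville-sectorial cover of $\Vc$.'' Your route via Example~\ref{ex:skel-vn} (skeleton identification), Theorem~\ref{thm:ccc} (Kuwagaki's CCC for the stacky fan $\Sigma_{\AA^n/\Gv}$), and Theorem~\ref{thm:gps-maincomparison} is exactly the packaging of that argument once one has the \cite{GPS3,NS20} machinery; the sectorial matching the paper mentions is how the boundary CCC is actually proved in \cite{GS17}.

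One small imprecision in your alternative route: the deck transformations of $\rho:\Vc\to\Pc$ are by $G$, acting on $\Vc$, not by $\Gv$ on $\Pc$ or its skeleton. What you want to say is that the $G$-cover structure endows $\cW(\Pc)\cong\cW(\Vc)^G$ with a residual $\Gv$-action (via the Pontryagin-dual grading on $\Rep(G)$), and this matches the geometric $\Gv$-action on $\Coh(\partial\CC^n)$; taking $\Gv$-equivariant objects on both sides then recovers $\cW(\Vc)\cong\Coh^{\Gv}(\partial\CC^n)$. This does not affect your main argument, which is sound.
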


The proof of Proposition \ref{prop:vc-mir1} proceeds by matching the closed cover of the stack $\CC^n/\G^\vee$ by toric orbit closures to a Liouville-sectorial cover of $\Vc.$ But we would like to express the category \eqref{eq:vc-mir1} in terms of a different Liouville-sectorial decomposition of $\Vc,$ namely the cover by left- and right-hand sectors $\Vc_L,\Vc_R$ discussed in \S \ref{sec:geom}. We begin with $\Vc_L.$

\begin{definition}
  Let $p:\Bl_0\CC^n\to \CC^n$ be the blowup of $\CC^n$ at the origin. We write $\tpc$ for the strict transform of the toric boundary $\partial\CC^n$ under this blowup, and 
  \[\pp:=p^{-1}(0)\cong \PP^{n-1} \]for the exceptional divisor.
	Similarly, we write $\PP_\Gv$ for the exceptional divisor of the blowup at $0$ of $\CC^n/G^\vee$.
\end{definition}
\begin{remark}
	The stack $\PP_\Gv$ is a $\Gv$-quotient of the projective space $\PP^{n-1},$ through the induced action of $\Gv$ on the exceptional divisor of the blowup. If $\Gv$ has a nontrivial subgroup $H$ which acts diagonally on $\CC^n,$ this subgroup will act trivially on $\PP^{n-1},$ so that $\PP_{\Gv}$ will be an Artin stack with generic stabilizer $H$. In terms of mirror symmetry, this will manifest itself as the fact that the mirror to $\PP_{\Gv}$ (discussed in Example \ref{ex:png-bdryskel}) will have $|H|$ components.
\end{remark}

Note that $\tpc$ intersects $\pp$ in its toric boundary $\partial \pp.$ 
This boundary divisor plays the role of mirror to the central Liouville sector $\Vc_{\frac12}$ in our decomposition of $\Vc$:

\begin{proposition}\label{prop:mir-mid}
  The wrapped Fukaya category $\cW(\Vc_{\frac12})$ of the Liouville sector $\cW(\Vc_{\frac12})$ is equivalent to the category $\Coh(\ppp_\Gv)$ of coherent sheaves on the toric boundary of the projective stack $\PP^{n-1}_\Gv.$
\end{proposition}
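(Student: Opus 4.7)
By Proposition~\ref{prop:G-va-sectcov}, the Liouville sector $\Vc_{\frac12}$ has the form $F\times T^*I$, where $F$ is a general fiber of $f|_\Vc$ and $I$ is a closed interval. Since the $T^*I$ factor does not affect the wrapped Fukaya category, the problem reduces to establishing
\[
  \cW(F)\;\simeq\;\Coh(\ppp_\Gv).
\]
By Lemma~\ref{lem:ffacts}(2) combined with the $G$-covering construction of \S\ref{sec:covers}, the fiber $F$ is precisely the hypersurface $H_G\subset(\CC^\times)^n$ of Example~\ref{ex:png-bdryskel}, an unramified $G$-cover of the hypersurface $H$ mirror to $\partial\PP^{n-1}$.

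First I would handle the untwisted case $G=\{1\}$. Here the Newton polytope of the Laurent polynomial defining $H$ gives rise to the fan $\Sigma_{\partial\PP^{n-1}}$ of cones on the faces of the standard simplex (excluding the origin vertex). Applying Theorem~\ref{thm:gs-skel}, the FLTZ Legendrian $\Lambda_{\Sigma_{\partial\PP^{n-1}}}$ is a skeleton of $H$. Combining Kuwagaki's coherent-constructible equivalence (Theorem~\ref{thm:ccc}) with the comparison Theorem~\ref{thm:gps-maincomparison} then yields
\[
  \cW(H)\;\simeq\;\wmsh(\LL_{\Sigma_{\partial\PP^{n-1}}})^{\op}\;\simeq\;\Coh(\partial\PP^{n-1}),
\]
matching the expectation recalled in Example~\ref{ex:pn-bdryskel}.

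To handle general $G$, I would use the fact that $H_G\to H$ is an unramified $G$-cover induced by $\rho$. Pulling the FLTZ skeleton back produces a FLTZ-type skeleton associated to the fan whose rays are generated by the exponent vectors of the monomials of $W-n$; in the quotient lattice $\ZZ^n/\ZZ_{\mathrm{diag}}$, this is precisely the stacky fan of $\ppp_\Gv$. Kuwagaki's correspondence together with the Pontryagin duality between $G$ and $\Gv$, exactly as in the proof of Proposition~\ref{prop:vc-mir1} in \cite{GS17}, then delivers the desired equivalence $\cW(H_G)\simeq\Coh(\ppp_\Gv)$.

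The main technical step will be verifying that Mikhalkin's tailoring procedure (Proposition~\ref{prop:tailoring}), applied to $H_G$, genuinely produces the asserted FLTZ skeleton, and that the resulting coherent-constructible equivalence intertwines the Galois $G$-action on $H_G$ with the stacky $\Gv$-action on $\PP^{n-1}_\Gv$. Both points are essentially established in \cite{GS17} for the analogous cover $\Vc\to\Pc$; passing to the fiber of $f$ amounts to applying those arguments one dimension down, using the boundary fan $\Sigma_{\partial\PP^{n-1}_\Gv}$ in place of the full fan relevant to $\Vc$.
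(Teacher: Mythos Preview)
Your proposal is correct and follows essentially the same approach as the paper's proof: both identify the fiber $F$ with the $G$-cover $H_G$ of Example~\ref{ex:png-bdryskel}, invoke the results of \cite{GS17} (via the FLTZ skeleton and the coherent-constructible correspondence) for the base case, and then use the $G$-cover/$\Gv$-quotient duality to pass to general $G$. The paper is terser, simply citing \cite{GS17} and the covering argument, while you spell out the intermediate steps; one minor notational quibble is that ``$\Sigma_{\partial\PP^{n-1}}$'' is not quite standard---the skeleton of $H$ is the Legendrian boundary $\partial\LL_{\Sigma_{\PP^{n-1}}}$ of the FLTZ Lagrangian for the fan of $\PP^{n-1}$, not an FLTZ Lagrangian for some fan of $\partial\PP^{n-1}$---but this does not affect the argument.
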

\begin{proof}
	This is a corollary of the results in \cite{GS17}, proved by matching the closed cover of $\ppp_\Gv$ by toric orbit closures to a Liouville-sectorial cover of $\Vc_{\frac12}.$
	Alternatively, one can recall from Example \ref{ex:png-bdryskel} the description of the skeleton $\LL_{\frac12}$ of $V_{\frac12}$ as an unramified $G$-cover of the skeleton $\partial\LL_{\Sigma_{\PP^{n-1}}}$ (coming from the presentation of $V_{\frac12}$ as an unramified $G$-cover of $P_{\frac12}).$ The effect of taking this $G$-cover is mirror to imposing a $\Gv$ quotient on $\Coh(\ppp)\cong \cW(P_{\frac12}).$
\end{proof}

In fact, the category $\cW(\Vc_{\frac12})$ of a general fiber of the map $\fc$ comes equipped with extra structure.

\begin{definition}
	We write $\mu^{-1}\in \Aut(\cW(\Vc_{\frac12}))$ for the clockwise monodromy automorphism of the category $\Vc_{\frac12},$ obtained from parallel transport of the general fiber of $\fc$ around 0.
\end{definition}

The automorphism $\mu^{-1}$ admits a geometric description on the mirror space $\ppp$.

\begin{lemma}
	As an automorphism of the category $\Coh^{\Gv}(\ppp),$ the functor $\mu^{-1}$ is given by tensor product with the line bundle $\cO_{\ppp_\Gv}(-1),$ the restriction to $\ppp$ of the line bundle $\cO_{\PP/\Gv}(-1)$ on the projective stack $\pp_\Gv.$
  \label{lem:mon-aut}
\end{lemma}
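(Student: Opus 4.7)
The plan is to reduce to the basic case $W=x_1+\cdots+x_n$, for which the statement is already essentially known by \cite{Nad-cnw}, and then upgrade the identification through the $G$-cover $\rho:V\to P$ to the $\Gv$-equivariant setting.

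First, since $f:V\to\CC$ factors as $V\xrightarrow{\rho|_V}P\xrightarrow{x_1\cdots x_n}\CC$ (cf.\ \S\ref{sec:covers}), the fiber $V_{\frac12}$ is an unramified $G$-cover of $P_{\frac12}$, and the monodromy symplectomorphism of $V_{\frac12}$ around $0$ is the $G$-equivariant lift of the monodromy of $P_{\frac12}$. Under the mirror symmetry equivalence $\cW(V_{\frac12})\cong\Coh(\ppp_\Gv)=\Coh^\Gv(\ppp)$ of Proposition \ref{prop:mir-mid}, which is obtained from the basic equivalence $\cW(P_{\frac12})\cong\Coh(\ppp)$ by $G$-covering on the A-side and $\Gv$-quotient on the B-side, the autoequivalence $\mu^{-1}$ is the $\Gv$-equivariant enhancement of the basic monodromy $\mu_P^{-1}\in\Aut(\Coh(\ppp))$.

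For the basic case, I would identify $P_{\frac12}$ with the corner sector $\partial^2\nsec_n$ of Definition \ref{def:nsec}, so that the equivalence $\cW(P_{\frac12})\cong\Coh(\ppp)$ of Corollary \ref{cor:nsec-bdry} arises as the restriction of the equivalence $\cW(\partial^h\nsec_n)\cong\Coh(\PP^{n-1})$. The horizontal restriction functor intertwines the two monodromies, both of which arise from parallel transport around $0$ in the common base of $x_1\cdots x_n$, so Nadler's computation --- which identifies the monodromy on $\cW(\partial^h\nsec_n)$ with tensoring by $\cO_{\PP^{n-1}}(-1)$ --- restricts to give $\mu_P^{-1}=-\otimes\cO_\ppp(-1)$.

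The final and main step is to match $\Gv$-equivariant structures. The line bundle $\cO_{\PP^{n-1}}(-1)$ is tautologically $\Gv$-equivariant via the inclusion $\cO(-1)\hookrightarrow\cO_{\PP^{n-1}}\otimes_\CC\CC^n$ and the natural $\Gv$-action on $\CC^n$; this structure descends to the stacky line bundle $\cO_{\PP^{n-1}_\Gv}(-1)$, whose restriction to $\ppp_\Gv$ is the bundle in the statement. On the A-side, the $\Gv$-linearization of $\mu^{-1}$ is the one induced by commutativity of the $G$-deck transformations on $V_{\frac12}$ with parallel transport of $f$. Any two $\Gv$-equivariant lifts of a single autoequivalence of $\Coh(\ppp)$ differ by a character $\chi:\Gv\to\CC^\times$, equivalently by an element of $G$ under the Pontryagin duality of the introduction; the remaining task is to verify that $\chi$ is trivial. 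I expect this to be the main obstacle, and would handle it by evaluating both equivariant structures on a distinguished test object --- for instance, the Lagrangian brane in $V_{\frac12}$ lifting the positive real sphere $M_+$ of Lemma \ref{lem:posreal}, on which both the geometric $G$-action and the linearization of $\cO_\ppp(-1)$ through $\CC^n$ can be written down explicitly in coordinates.
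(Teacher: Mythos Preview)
Your proposal follows essentially the same strategy as the paper's proof: reduce to the basic case $W=x_1+\cdots+x_n$ by the $G$-cover $\rho$, identify $P_{\frac12}$ with the corner $\partial^2\nsec_n$, invoke \cite[Corollary 4.24]{Nad-cnw} to compute the monodromy on $\Coh(\PP^{n-1})$ as $-\otimes\cO(-1)$, restrict to the boundary $\ppp$, and then pass back up to the $\Gv$-equivariant category. The paper's proof is almost exactly this, stated more tersely.

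The one place where you are more careful than the paper is your final step about matching $\Gv$-equivariant structures. The paper does not address this explicitly: it simply says that ``when we pass to the $G$-covers \ldots\ the monodromy on $V_{\frac12}$ comes from convolution with an object mirror to $\cO_{\ppp_\Gv}(-1)$.'' The implicit point is that the monodromy is described not just as an abstract autoequivalence but as convolution with a specific kernel object; passing along the cover/quotient carries the kernel itself, and so the equivariant structure is determined by the construction rather than chosen after the fact. This sidesteps the character ambiguity you raise without the need to evaluate on a test object. Your proposed verification via the lift of $M_+$ would work, but it is more labor than the paper invests, and the kernel argument is the cleaner way to see why no ambiguity actually arises.
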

\begin{proof}
	The space $\Vc$ is given by an unramified $G$-cover of $\Pc$, which as we have seen is mirror to the quotient projection $\partial \CC^n/G^\vee \gets \partial \CC^n,$ and this cover restricted to an unramified $G$-cover $\Vc_{\frac12}\to \Pc_{\frac12},$ mirror to the $G^\vee$-quotient $\ppp_{G^\vee}\gets \ppp.$ We have seen that $\Pc_{\frac12}$ is the corner $\partial^2\nsec_n$ of a sector whose horizontal boundary $\partial^h \nsec_n$ is the LG model $((\CC^\times)^{n-1}, x_1 + \cdots + x_{n-1} + \frac{1}{x_1\cdots x_{n-1}},$ whose skeleton is the FLTZ Lagrangian $\LL_{\Sigma_{\PP^{n-1}}}\subset T^*T^{n-1}.$

	The calculation of \cite[Corollary 4.24]{Nad-cnw} establishes that the monodromy $\mu^{-1}$ on $\partial^h\nsec_n$ is given by convolution by an object which is mirror to $\cO(-1)$ on $\Coh(\PP^{n-1})$; on the boundary $\partial^2\nsec_n = \Pc_{\frac12},$ this monodromy autoequivalence is mirror to tensoring by $\cO_{\ppp}(-1)$ on $\Coh(\partial\PP^{n-1}).$ Now when we pass to the $G$-covers of the previous paragraph, we find that the mondromy on $V_{\frac12}$ comes from convolution with an object mirror to $\cO_{\ppp_\Gv},$ as desired.
\end{proof}

We can use the monodromy automorphism $\mu^{-1}$ to give a new method for computation of the wrapped Fukaya category $\cW(\Vc_L).$

\begin{proposition}\label{prop:pantscalc}
  There is an equivalence 
  \[\cW(\Vc_L) \cong \Coh^{\Gv}(\tpc)\]
  between the wrapped Fukaya category $\cW(\Vc_L)$ and the category of $\Gv$-equivariant coherent sheaves on the proper transform of $\partial \CC$ under the blowup at 0. 
\end{proposition}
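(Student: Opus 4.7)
The plan is to apply sectorial codescent (Theorem \ref{thm:codescent}) to the presentation of $\Vc_L$ given in Propositions \ref{prop:leftskel-va} and \ref{prop:G-va-sectcov}: $\Vc_L$ is obtained from the product sector $\Vc_{\frac12} \times \cA_2$ by gluing two of the three $\cA_2$-stops to each other using the monodromy identification $\mu$. The effect of this gluing on wrapped Fukaya categories is to produce the \emph{open mapping torus} of $\mu$, namely the category of pairs $(L,\nu)$ with $L \in \cW(\Vc_{\frac12})$ and $\nu : \mu L \to L$ a (not necessarily invertible) morphism; the morphism is unconstrained precisely because the third $\cA_2$-stop remains unglued, keeping the mapping torus ``open.''

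With this reformulation in hand, I would invoke the mirror identifications already established in this section. Proposition \ref{prop:mir-mid} gives $\cW(\Vc_{\frac12}) \cong \Coh^{\Gv}(\ppp)$, and Lemma \ref{lem:mon-aut} identifies $\mu^{-1}$ with the autoequivalence $-\otimes \cO_{\ppp_\Gv}(-1)$. A pair $(L, \nu : \mu L \to L)$ therefore corresponds on the B-side to a coherent sheaf $E$ on $\ppp_\Gv$ equipped with a morphism $E \to \cO_{\ppp_\Gv}(-1)\otimes E$, equivalently an action map $\cO_{\ppp_\Gv}(1)\otimes E \to E$. Such data is precisely a module over the quasi-coherent sheaf of graded algebras $\Sym_{\cO_{\ppp_\Gv}}(\cO_{\ppp_\Gv}(1)) = \bigoplus_{k\geq 0} \cO_{\ppp_\Gv}(k)$, hence a coherent sheaf on the relative spectrum
\[
\Spec_{\ppp_\Gv}\bigl(\Sym_{\cO_{\ppp_\Gv}}(\cO_{\ppp_\Gv}(1))\bigr) = \Tot(\cO_{\ppp_\Gv}(-1)).
\]
Since $\Bl_0 \CC^n$ is the total space of $\cO_{\PP^{n-1}}(-1)$ and the strict transform $\tpc$ of $\partial\CC^n$ coincides with $\Tot(\cO_{\ppp}(-1))$, passing to $\Gv$-quotients identifies this category with $\Coh^{\Gv}(\tpc)$, completing the chain of equivalences.

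The hard part is the first step: rigorously verifying that the sectorial gluing produces the open mapping torus of $\mu$. I would approach this via the perverse schober viewpoint outlined in \S\ref{sec:intro}. The fibration $\fc : \Vc_L \to \Dc$ onto a punctured disk around $0$ gives rise to a perverse schober on $\Dc$ whose underlying category is $\cW(\Vc_{\frac12})$ equipped with the monodromy autoequivalence $\mu$; by a computation analogous to \eqref{eq:schob-glob-cs} using the sectorial codescent formalism of \cite{GPS2}, the global sections of this schober are precisely the open mapping torus of $\mu$ acting on $\cW(\Vc_{\frac12})$. An alternative route is a direct K\"unneth-style analysis of $\cW(\Vc_{\frac12} \times \cA_2)$ combined with tracking the effect of the two-ended identification; either way, the computation is entirely local near $0 \in \CC$ and relies only on the spherical cup-cap adjunction discussed in Theorem \ref{thm:sylvan}.
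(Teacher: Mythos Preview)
Your approach is essentially identical to the paper's: both use the presentation of $\Vc_L$ from Proposition~\ref{prop:leftskel-va}/\ref{prop:G-va-sectcov} to describe $\cW(\Vc_L)$ as the ``open mapping torus'' category of pairs $(L,\nu:\mu L\to L)$, then invoke Proposition~\ref{prop:mir-mid} and Lemma~\ref{lem:mon-aut} to translate this to pairs $(\cF,\cF(1)\to\cF)$ on $\ppp_\Gv$, and finally identify that category with $\Coh^{\Gv}(\tpc)$ via the relative $\Spec$ description $\tpc=\underline{\Spec}_{\ppp}\Sym_{\cO_{\ppp}}\cO(1)$. The one technical point you omit and the paper flags (in a footnote) is that the mapping-torus description requires passing to Ind-completions before taking compact objects, since an object of $\cW(\Vc_L)$ may have infinite-dimensional ``underlying object'' in $\cW(\Vc_{\frac12})$---analogous to coherent sheaves on $\Tot(\cO(-1))$ not being perfect over $\ppp$.
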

\begin{proof}
  The space $\tpc/\Gv$ is a toric stack,
  so one possible proof proceeds following \cite{GS17} as usual, matching toric orbit closures with Liouville subsectors. But we will give here a different proof, more closely associated to the description of the skeleton $\LL_{\Vc_L}$ given in Proposition \ref{prop:leftskel-va}.

  From the description in Proposition \ref{prop:leftskel-va}, we can see that the category $\cW(\Vc_L)$ is equivalent to the compact objects in the category of pairs
  \[
		\left\{\left(X\in \Ind(\cW(\Vc_{\frac12})), \mu X \to X\right)\right\}
  \]
  of an object $X$ in an Ind-completion%
	\footnote{It is often necessary to pass to Ind-completions while computing a colimit, and then to return to small categories afterward by passing to compact objects. The Ind-completion remains in the final description here because an object of $\cW(\Vc_L)$ will often have infinite-dimensional ``underlying object'' in $\cW(\Vc_{\frac12})$; this is analogous to the fact that coherent sheaves on $\Spec R$ are not in general finite $R$-modules, but possibly infinite-dimensional $R$-modules which have a finiteness condition on their generation as $R$-modules. }
	of the Fukaya category of the nearby fiber and a $\mu$-twisted endomorphism of $X$, where $\mu\in\Aut\left(\cW(\Vc_{\frac12})\right)$ is the counterclockwise monodromy map.
  Since $\cW(\Vc_{\frac12})\cong \Coh(\ppp_\Gv)$ with $\mu$ given by the functor of tensor product $-\otimes \cO(1),$  we thus have an equivalence
  \begin{equation}\label{eq:alg-relspec}
    \cW(\Vc_{L})\cong \left\{\left(\cF\in\Ind\Coh(\ppp_\Gv),\nu:\cF(1)\to\cF\right)\right\}.
\end{equation}

  Now note that $\tpc$ is the total space $\Tot(\cO_{\ppp}(-1))$ of the bundle $\cO(-1)$ on $\ppp,$ so that it can be described equivalently as the relative $\Spec$
  \[
  \tpc = \underline{\Spec}_{\ppp}\Sym_{\cO_{\ppp}}\cO(1),
  \]
  and hence the category $\Coh^{\Gv}(\tpc)$ is equivalent to the category of coherent sheaves $\cF$ in $\Coh(\ppp_\Gv)$ equipped with the additional data of a map $\cF\otimes \cO_{\ppp_\Gv}(1)\to\cF$ describing the action of the generators of this symmetric algebra. This agrees with the description of the category $\cW(\Vc_{L})$ given in \eqref{eq:alg-relspec}.
\end{proof}

Now recall from Example \ref{ex:skel-pants} that the space $\Pc,$ the $(n-1)$-dimensional pants, is mirror to the toric boundary $\partial\CC^n,$ and there is an equivalence of categories
\[
\cW(\Pc)\cong \Coh(\pcn).
\]
Accordingly, the wrapped Fukaya category of the space $\Vc,$ which is an unramified $\G$-cover of $\Pc$ (and is called the ``$\G$-pants'' in \cite{GS17}) admits a presentation as
\begin{equation}\label{eq:vc-usu}
\cW(\Vc)\cong \Coh^{\Gv}(\pcn).
\end{equation}
This is not obviously identical with the presentation of $\Vc$ via the Liouville-sectorial cover which we have been discussing so far:
\begin{lemma}
  The wrapped Fukaya category $\cW(\Vc)$ of $\Vc$ is equivalent to the colimit
  \begin{equation}\label{eq:vc-unu}
    \colim\left( \Coh^{\Gv}(\tpc)\gets\Coh(\ppp_\Gv) \to \Coh^{\Gv}(\{0\}) \right),
\end{equation}
  where the maps are given by pushforwards along the inclusion $\pp_\Gv\hookrightarrow \tpc/\Gv$ and the projection $\pp_\Gv\to \{0\}/\Gv,$ respectively.
  \label{lem:vc-prescat}
\end{lemma}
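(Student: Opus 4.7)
The plan is to apply sectorial codescent to the Liouville-sectorial decomposition of $\Vc$ from Proposition \ref{prop:G-va-sectcov}, and then to identify each wrapped Fukaya category and each cup functor appearing in the resulting colimit with the coherent-sheaf data on the right-hand side of \eqref{eq:vc-unu}. Applying Theorem \ref{thm:codescent} to the cover $\Vc = \Vc_L \cup_{V_{\frac12}} V_R$ immediately yields
\[
  \cW(\Vc) \simeq \colim\bigl(\cW(\Vc_L) \xleftarrow{\cup_L} \cW(V_{\frac12}) \xrightarrow{\cup_R} \cW(V_R)\bigr),
\]
where $\cup_L$ and $\cup_R$ are the Orlov cup functors for the Weinstein pairs $(\Vc_L, V_{\frac12})$ and $(V_R, V_{\frac12})$.

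The three vertex categories are either already computed or immediate: Proposition \ref{prop:pantscalc} gives $\cW(\Vc_L) \simeq \Coh^{\Gv}(\tpc)$ and Proposition \ref{prop:mir-mid} gives $\cW(V_{\frac12}) \simeq \Coh(\ppp_\Gv)$. For the right-hand term, $V_R$ is a disjoint union of $|G|$ copies of $T^*\DD$, each with wrapped Fukaya category $\Perf(\CC)$; combined with the transitive $\Gv$-action coming from the cover $V \to P$ and Pontryagin duality $|\on{Irr}(\Gv)| = |G|$, this gives
\[
  \cW(V_R) \simeq \Perf(\CC)^{|G|} \simeq \Rep(\Gv) \simeq \Coh^{\Gv}(\{0\}).
\]

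The main obstacle is matching the two cup functors with the claimed pushforwards. For $\cup_L$, I would use the explicit pairs description \eqref{eq:alg-relspec} of $\cW(\Vc_L)$: under the identification $\tpc/\Gv \simeq \Tot(\cO_{\ppp_\Gv}(-1))$, the zero-section $i\colon \ppp_\Gv \hookrightarrow \tpc/\Gv$ has pushforward $i_*\cF = (\cF, 0)$ (the pair with trivial $\Sym(\cO_{\ppp_\Gv}(1))$-action), and this agrees with the effect of extending a Lagrangian from $V_{\frac12}$ across the product region $V_{\frac12}\times \cA_2$ of Proposition \ref{prop:leftskel-va} prior to the monodromy gluing, where no twist is introduced. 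For $\cup_R$, the vanishing cycle of each Lefschetz disk attachment is a lift of the real positive sphere $S\subset V_{\frac12}$, which by Remark \ref{rem:skel-sphere} and the toric mirror symmetry of \cite{GS17} is mirror to the skyscraper sheaf at the origin of the corresponding projective stack; this identifies $\cup_R$ with pushforward along the projection $\ppp_\Gv\to\{0\}/\Gv$. Both functorial identifications can be verified cleanly by first treating the non-equivariant base case $W = x_1+\cdots+x_n$, where one invokes Corollary \ref{cor:nsec-bdry} and the calculation of \cite{Nad-cnw} directly, and then using $\Gv$-equivariant functoriality of the cover $\Vc\to\Pc$ to upgrade to the general case.
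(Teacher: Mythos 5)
Your overall strategy --- sectorial codescent for the cover $\Vc = \Vc_L\cup_{V_{\frac12}}V_R$, identification of the three vertex categories via Propositions \ref{prop:pantscalc} and \ref{prop:mir-mid}, reduction to the base case $W=x_1+\cdots+x_n$ with $\Gv$-equivariance applied uniformly, and then matching the two edge functors --- is the same as the paper's. Your treatment of the left-hand functor is also fine: the paper verifies the same identification on the adjoint side, computing that the cap functor sends $(\cF,\nu)$ to $\Cone(\nu)=i^*\cF$, which is equivalent to your direct claim that the cup functor is $\cF\mapsto(\cF,0)=i_*\cF$.

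The gap is in your identification of the right-hand functor. You assert that the vanishing sphere $S\subset V_{\frac12}$ is mirror to ``the skyscraper sheaf at the origin,'' but this is not correct, and it matters. By Remark \ref{rem:skel-sphere} the sphere $S$ is the positive real locus, which in each local chart $(T^*T^{n-2},\Lambda_{\Sigma_{\PP^{n-2}}})$ of the sectorial cover of $\Pc_{\frac12}$ is a \emph{cotangent fiber} at $1\in T^{n-2}$; cotangent fibers are mirror to structure sheaves, and these glue to give $\cO_{\ppp}$, not a skyscraper (skyscrapers are mirror to the compact tori in the skeleton, not to the noncompact simplices making up $S$). This identification is exactly what forces $\cup_R$ to be $\pi_*=R\Gamma=\Hom(\cO_{\ppp_\Gv},-)$: the adjoint of $\cup_R$ sends the generator of $\Perf_\CC$ to $S\leftrightarrow\cO_{\ppp_\Gv}$, matching $\pi^*(\CC)=\cO_{\ppp_\Gv}$. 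If $S$ were instead a skyscraper $\delta_p$, the same adjunction argument would identify $\cup_R$ with a stalk-type functor at $p$, which is not the pushforward to $\{0\}/\Gv$ claimed in the lemma. You may be conflating $S$ with the thimble $\DD$ itself, whose image in the glued category $\Coh^{\Gv}(\pcn)$ is indeed the skyscraper at the origin of $\partial\CC^n/\Gv$ --- but that is a statement about an object of the colimit, not about the object of $\cW(V_{\frac12})\cong\Coh(\ppp_\Gv)$ needed to pin down the edge functor.
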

\begin{proof}
  This colimit presentation corresponds to the Liouville-sectorial decomposition we have been studying in this paper. We have already proven that the categories in \eqref{eq:vc-unu} match the wrapped Fukaya categories of the sectors $\Vc_L,\Vc_{\frac12},$ and $\Vc_R,$ respectively, so we only need to check that the functors induced by the Liouville-sectorial inclusions of $\Vc_{\frac12}\times T^*\RR$ into $\Vc_L$ and $\Vc_R$ are as described in the lemma. 
  (Actually, we we will check agreement on the left adjoints of these functors, which are easier to understand. And 
  for simplicity, we note that it is sufficient to check the case where $\Vc=\Pc,$ since the $\G$-cover applies uniformly to all of the Liouville sectors involved and hence the $\Gv$ equivariance applies uniformly to all the categories involved in \eqref{eq:vc-unu}.) 
  
  Consider first the left cap functor
  \[
  \cW(\Pc_L)\to\cW(\Pc_{\frac12}).
  \]
  Under the description in Proposition \ref{prop:pantscalc}, this functor is given by the map which takes a pair $(X,\nu:\mu X\to X)$ to the object $\Cone(\nu).$ But in the B-side description from that proposition, the cone on $\cF(-1)\to \cF$ is the pullback of $\cF$ under the inclusion of the zero-section
  \[
  \ppp\hookrightarrow\underline{\Spec}_{\ppp}\Sym_{\cO_{\ppp}}\cO(1) 
  \]

  Now consider the right cap functor
  \begin{equation}\label{eq:vectmap}
  \cW(\Pc_R)\to\cW(\Pc_{\frac12}).
\end{equation}
The wrapped category $\cW(\Pc_R)$ of sector $\Pc_R$ is equivalent to the category $\Perf_\CC$ of finite-dimensional vector spaces, and we need to show that \eqref{eq:vectmap}
sends the 1-dimensional vector space to the structure sheaf $\cO_{\ppp}\in \Coh(\ppp)\cong\cW(\Pc_{\frac12}).$

  Recall that $\Pc_R$ describes a disk attachment to $\Pc_{\frac12}$ with boundary sphere $S\subset \LL_{\Pc_{\frac12}}.$ We therefore need to check that the Lagrangian sphere $S,$ equipped with trivial local system, represents the structure sheaf $\cO_{\ppp}$ under the mirror symmetry equivalence 
  \begin{equation}\label{eq:hmsppp}
    \cW(\Pc_{\frac12})\cong \Coh(\ppp).\end{equation}
    The equivalence \eqref{eq:hmsppp} describes $\Coh(\ppp)$ as a colimit of categories of coherent sheaves on the toric orbit closures of $\ppp,$ corresponding to a cover of $\Pc_{\frac12}$ by Liouville sectors. The basic sectors in this cover are of the form $(T^*T^{n-2},\Lambda_{\Sigma_{\PP^{n-2}}}),$ and in each case the Lagrangian mirror to the structure sheaf $\cO_{\PP^{n-2}}$ is the cotangent fiber at $1\in T^{n-2}.$ These cotangent fibers glue together to form the sphere $S$, matching the gluing of structure sheaves $\cO_{\PP^{n-2}}$ into the structure sheaf $\cO_{\PP^{n-1}}$ of $\PP^{n-1}.$
\end{proof}

By comparing the equivalence \eqref{eq:vc-usu} with Lemma~\ref{lem:vc-prescat}, we can deduce a new colimit presentation of the category $\Coh^{\Gv}(\pcn).$ In fact, it is possible to prove this directly, without any reference to mirror symmetry:
\begin{lemma}\label{lem:colim-undef}
There is an equivalence of categories
  \begin{equation}\label{eq:colim-undef0}
    \Phi: \Coh^{\Gv}(\pcn)\xrightarrow{\sim} \colim\left( \Coh^{\Gv}(\tpc)\gets\Coh(\ppp_\Gv) \to \Coh^{\Gv}(\{0\}) \right).
\end{equation}
\end{lemma}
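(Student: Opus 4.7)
The plan is to realize $\pcn$ as a pushout of $\Gv$-schemes and transfer this to a gluing description for coherent sheaves, constructing the inverse equivalence via the universal property.

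\textbf{First step.} I would present $\pcn$ as the Ferrand-type pushout of $\Gv$-schemes
\[
  \pcn \;\simeq\; \tpc \sqcup_{\ppp_\Gv} \{0\}.
\]
This is justified by the observation that the blowup contraction $q:\tpc\to\pcn$ is a proper birational morphism that is an isomorphism away from the origin, with $q^{-1}(0) = \ppp_\Gv$. Together with the inclusion $j:\{0\}\hookrightarrow\pcn$, these maps agree on $\ppp_\Gv$ (both factoring as $\ppp_\Gv \to \{0\}\hookrightarrow \pcn$), and one verifies that the local ring at the origin of the pushout, computed as a fiber product, agrees with that of $\pcn$, which records the $n$ coordinate hyperplanes meeting at $0$.

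\textbf{Second step.} The universal property of this pushout, together with the compatibility $q_*\circ i_* = (q\circ i)_* = (j\circ p)_* = j_*\circ p_*$, produces a cocone of pushforward functors from the lemma's span into $\Coh^{\Gv}(\pcn)$, and hence a comparison functor
\[
  \Phi^{-1}\colon \colim\!\left(\Coh^{\Gv}(\tpc)\xleftarrow{\,i_*\,}\Coh^{\Gv}(\ppp_\Gv)\xrightarrow{\,p_*\,}\Coh^{\Gv}(\{0\})\right)\to \Coh^{\Gv}(\pcn).
\]

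\textbf{Third step.} It remains to show $\Phi^{-1}$ is an equivalence. Essential surjectivity follows from the birationality of $q$: for $\cF\in\Coh^{\Gv}(\pcn)$, the unit $\cF\to q_*q^*\cF$ fits into a distinguished triangle whose cone is supported at $\{0\}$, and hence lies in the image of $j_*$. Full faithfulness reduces to derived Mayer--Vietoris descent for the pushout square, which can be checked on the generating collection of structure sheaves of $\Gv$-equivariant toric orbit closures, stratum by stratum, using the toric decompositions of all four schemes appearing in the square.

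\textbf{Main obstacle.} The hardest part is justifying the descent: $\pcn$ is highly singular at the origin where all coordinate hyperplanes meet, so one must argue that Ferrand-type descent applies in this non-smooth setting. A cleaner alternative is to reformulate the desired colimit $\infty$-categorically, identifying the colimit of pushforward functors with the limit of the adjoint cospan of pullback functors via the standard duality in presentable stable $\infty$-categories; this reduces the lemma to the standard Mayer--Vietoris statement $\Coh^{\Gv}(\pcn)\simeq\Coh^{\Gv}(\tpc)\times_{\Coh^{\Gv}(\ppp_\Gv)}\Coh^{\Gv}(\{0\})$, which is a known descent result for coherent sheaves along Ferrand pushouts of this type.
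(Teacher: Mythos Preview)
Your approach is correct in outline but takes a genuinely different route from the paper. The paper defines $\Phi$ directly as the composite of the pullback $p^*:\Coh^{\Gv}(\pcn)\to\Coh^{\Gv}(\tpc)$ along the blowdown with the structural map to the colimit. It then asserts that $p^*$ is fully faithful, so $\Phi$ is too, and checks essential surjectivity by a short argument: for any $\cF\in\Coh^{\Gv}(\tpc)$, the map $p^*p_*\cF\to\cF$ reduces to the case of $\cF$ supported on the exceptional locus, and any such object is identified in the colimit with something in $\Coh^{\Gv}(\{0\})$, hence lies in the image of $\Phi$.

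Your construction instead builds $\Phi^{-1}$ from pushforwards via a Ferrand pushout and appeals to Mayer--Vietoris descent. Your Step~3 (essential surjectivity via the unit $\cF\to q_*q^*\cF$) is essentially the dual of the paper's argument. The difference is in full faithfulness: the paper gets it nearly for free from full faithfulness of $p^*$ and the explicit form of the colimit identifications, whereas you invoke descent along a Ferrand square, which---as you correctly flag---is the nontrivial step in the singular setting and requires either a careful check on generators or an appeal to general results. The paper's argument is shorter and avoids this machinery; your approach is more conceptual and would generalize more readily, but at the cost of importing a descent theorem you would need to cite or prove.
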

\begin{proof}
  The functor $\Phi$ is induced from the pullback functor
  \[
  p^*:\Coh^{\Gv}(\pcn)\to \Coh^{\Gv}(\tpc).
  \]
  This functor is fully faithful, hence $\Phi$ is also, and we need only to prove that $\Phi$ is essentially surjective. In other words, we need to check that every object of the colimit in \eqref{eq:colim-undef0} can be identified with an object in $\Coh^{\Gv}(\tpc)$ which is pulled back from $\Coh^{\Gv}(\pcn).$ 

  So let $\cF$ be an object of $\Coh^{\Gv}(\pcn).$ By using the map $p^*p_*\cF\to \cF,$ we can reduce to the case that $\cF$ is supported on the exceptional locus $\pp_\Gv.$ 
  But the colimit in \eqref{eq:colim-undef0} identifies any such object with the pullback of some sheaf on $\pcn/\Gv$ supported at at $\{0\}/\Gv,$ as desired.
\end{proof}

\subsection{Deformation theory}\label{sec:defthry2}
We are ready at last to proceed to the calculation of the wrapped Fukaya category $\cW(V)$ of the Milnor fiber $V$.
On the B-side, the geometric fact we will need is the deformation of \eqref{eq:colim-undef0} by the function $W^\vee.$
\begin{lemma}
There is an equivalence of 2-periodic categories
\begin{equation}\label{eq:colim-def}
	\Phi: \MF^{\Gv}(\pcn,W^\vee)\xrightarrow{\sim} \colim\left( \MF^{\Gv}(\tpc,W^\vee)\gets\Coh(\ppp_\Gv)_{\ZZ/2} \to \Coh^{\Gv}(\{0\})_{\ZZ/2} \right).
\end{equation}
\end{lemma}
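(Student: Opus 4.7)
The strategy is to view \eqref{eq:colim-def} as the deformation by $W^\vee$ of the undeformed equivalence in Lemma \ref{lem:colim-undef}. The key geometric computation is that the pullback $p^*W^\vee$ along the blowup $p\colon \tcn/\Gv\to \pcn/\Gv$ vanishes along the exceptional divisor: since $W^\vee(0)=0$ (the polynomial has no constant term, being a sum of Berglund-H\"ubsch monomials), we may write $W^\vee=\sum_i x_i g_i(x)$, so in a chart with $x_1=t$ and $x_i=ty_i$ for $i\geq 2$ we get $p^*W^\vee = t\cdot\bigl(g_1+\sum_{i\geq 2} y_i g_i\bigr)$, which vanishes along $\{t=0\}$. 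In particular, the restriction of $p^*W^\vee$ to $\ppp_\Gv = \tpc\cap\pp$ and to $\{0\}/\Gv$ is zero.

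Given this, I would define $\Phi$ exactly as in the proof of Lemma \ref{lem:colim-undef}: the pullback $p^*$ promotes to a functor $\MF^{\Gv}(\pcn, W^\vee)\to \MF^{\Gv}(\tpc, W^\vee)$ since $W^\vee$ pulls back to $p^*W^\vee$, and composing with the natural map into the colimit gives $\Phi$. The vanishing above also ensures that the pushforward functors from $\Coh(\ppp_\Gv)$ and $\Coh^{\Gv}(\{0\})$ into $\MF^{\Gv}(\tpc, W^\vee)$ are well-defined in the diagram of \eqref{eq:colim-def}: for sheaves supported on loci where $W^\vee$ vanishes, the matrix-factorization differentials square to zero, so such objects are genuinely plain 2-periodic complexes, justifying the appearance of undeformed $\Coh$-categories in the middle and right terms of the colimit.

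To show that $\Phi$ is an equivalence in the MF setting, I would mimic the argument of Lemma \ref{lem:colim-undef}: full faithfulness follows from the blowup identity $p_*p^* = \id$, which extends to the level of $\MF$-Hom complexes because these are computed locally on the blowup itself; for essential surjectivity, given $\cF\in \MF^{\Gv}(\tpc, W^\vee)$, the counit $p^*p_*\cF\to \cF$ has cone supported on the exceptional locus $\ppp_\Gv$, hence lies in the image of the middle or right term of the colimit (again since $W^\vee$ acts by zero there). The main anticipated obstacle is the careful identification of $\MF$-objects supported on the exceptional locus with plain 2-periodic complexes of sheaves on $\ppp_\Gv$ or on $\{0\}/\Gv$; this is a mild technicality that follows from the vanishing of $p^*W^\vee$, but one could also sidestep it by invoking the general fact that deformations of diagrams of dg-categories by compatible $HH^2$-classes commute with colimits, in which case \eqref{eq:colim-def} follows immediately from Lemma \ref{lem:colim-undef} once we observe that the class $W^\vee \in HH^0(\Coh^{\Gv}(\pcn))$ corresponds to the triple $(p^*W^\vee,0,0)$ on the three factors of the colimit.
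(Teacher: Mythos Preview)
Your proposal is correct and follows essentially the same approach as the paper: define $\Phi$ via the pullback $p^*$, argue full faithfulness from that of $p^*$, and obtain essential surjectivity by reducing via the counit $p^*p_*\cF\to\cF$ to objects supported on the exceptional locus, where $W^\vee$ vanishes so the middle and right categories are undeformed. The paper's proof is terser---it simply says the argument is ``exactly the same manner as Lemma~\ref{lem:colim-undef}'' and notes parenthetically that $W^\vee$ vanishes at $0$ and hence on $\ppp_\Gv$---but your added detail (the chart computation, the explicit identification of $\MF$-objects supported where $W^\vee=0$ with plain 2-periodic complexes) fills in exactly what the paper leaves implicit.
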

\begin{proof}
This statement is proved in exactly the same manner as Lemma \ref{lem:colim-undef}: the pullback map $p^*:\Coh^{\Gv}(\pcn)\to \Coh^{\Gv}(\tpn)$ induces a fully-faithful embedding 
\[p^{*,W^\vee}:\MF(\pcn/\Gv,W^\vee)\to\MF(\tpn/\Gv,W^\vee)\]
on matrix factorization categories,
and every object of $\MF(\tpn/\Gv)$ is identified in the colimit with one obtained through this deformed pullback. (Note that the function $W^\vee$ fanishes at $0\in \partial \CC^n,$ hence also on the exceptional divisor $\ppp_\Gv,$ and therefore the middle and right-hand categories in the colimit in \eqref{eq:colim-def} could also be written as $\MF(\ppp_\Gv,W^\vee)$ and $\MF^{\Gv}(\{0\},W^\vee),$ respectively.)
\end{proof}

On the A-side, we will need one final piece of data in order to make contact with the description from the previous lemma. Recall that the presentation of $V_{\frac12}$ as a boundary sector of $V_L$ induces a monad $\cap\cup$ on $\cW(V_{\frac12})$.
\begin{lemma}\label{lem:generic}
	Under the mirror symmetry equivalence described in Proposition \ref{prop:mir-mid}, the monad $\cap \cup$ is the endofunctor of $\Coh(\ppp_\Gv)$ given by tensoring with the cone of $s$, where $s$ is the restriction to $\ppp_\Gv$ of the function $W^\vee\in \Gamma(\pp_\Gv,\cO(1)).$
\end{lemma}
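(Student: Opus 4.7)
Plan: By Theorem \ref{thm:sylvan}, the monad $\cap \cup$ admits a presentation $\Cone(\mu^{-1} \xrightarrow{s} \id_{\cW(V_{\frac12})})$ for some natural transformation $s$. Combining the mirror equivalence of Proposition \ref{prop:mir-mid} with the identification $\mu^{-1} \leftrightarrow (-\otimes\cO_{\ppp_\Gv}(-1))$ of Lemma \ref{lem:mon-aut}, this $s$ becomes a morphism $\cO_{\ppp_\Gv}(-1) \to \cO_{\ppp_\Gv}$, i.e., a section $s \in \Gamma(\ppp_\Gv, \cO(1))$. The task is to identify this section with the restriction of $W^\vee$.

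The plan is to reduce to the basic case $W = x_1 + \cdots + x_n$ via the $G$-cover $V_L \to P_L$ of Proposition \ref{prop:G-va-sectcov}. In that basic case $P_L$ is equivalent to the vertical boundary sector $\partial^v \nsec_n$ studied in \S\ref{sec:nxings}, and by Nadler's Lemma \ref{lem:nad-main}(2) the corresponding section on $\ppp$ is a generic linear form, which in appropriate coordinates reads $z_1 + \cdots + z_n$. Since the LG model $(V_L, f)$ is pulled back from $(P_L, x_1\cdots x_n)$ along $\rho$, and since the cup-cap adjunction is compatible with such $G$-covers, the section $s$ for $V_L$ pulls back (under the quotient $\ppp \to \ppp_\Gv$ on the B-side) to a $\Gv$-equivariant section on $\ppp$.

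The $\Gv$-equivariance constraint cuts down the candidate sections dramatically: by the Berglund-H\"ubsch description of $G$ and $\Gv$ as mutual Pontryagin duals, the $\Gv$-equivariant sections of $\cO_\ppp(1)$ descending to $\cO_{\ppp_\Gv}(1)$ are spanned by exactly the $n$ monomials $\prod_j z_j^{a_{ji}}$ indexed by the columns of $A$; these are linearly independent by invertibility of $A$, and they are precisely the monomials of $W^\vee$. Hence $s = \sum_i c_i \prod_j z_j^{a_{ji}}$ for some $c_i \in \CC$, and it remains only to verify $c_i \neq 0$ for each $i$.

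The main obstacle is this non-vanishing. Geometrically, $s$ counts holomorphic sections of $f : V_L \to \CC$ over a small disk around $0 \in \CC$, and each $c_i$ records the contribution of sections through the component of the normal-crossings limit $\{f=0\} \cap V$ indexed by the $i$th monomial of $W$. I would deduce $c_i \neq 0$ by setting up a local normal-crossings model near that component, reducing the computation of $c_i$ to the basic instance of Lemma \ref{lem:nad-main}; alternatively, one may refine the sectorial decomposition of $V_L$ along the combinatorics of the skeleton $\LL_{H_G}$ from Example \ref{ex:png-bdryskel} so that each monomial of $W^\vee$ is isolated within an independent sub-sector modelled on (a translate of) $\nsec_k$, and then invoke Nadler's genericity in parallel for each such sub-sector. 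Either route yields $c_i \neq 0$, and the coordinate rescaling absorbed by $(\CC^\times)^n$ then matches $s$ with $W^\vee|_{\ppp_\Gv}$ on the nose.
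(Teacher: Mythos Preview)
Your proposal is correct and follows essentially the same route as the paper: both reduce to the base case $P_L$ via the $G$-cover $V_L \to P_L$, invoke Nadler's genericity result (Lemma~\ref{lem:nad-main}) there, and then identify the resulting section on $\ppp_\Gv$ with $W^\vee$ up to coordinate rescaling.

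The one noteworthy difference is in how non-vanishing of the coefficients $c_i$ is argued. The paper gives two crisp one-line arguments: either observe (following \cite[Theorem 5.1]{Nad-cnw}) that the monad $\cap\cup$ acts as zero on the components of $\LL_{V_{\frac12}}$ mirror to the $0$-dimensional toric strata of $\ppp_\Gv$, or note directly that each of the $n$ holomorphic disks counted by $s$ in the base sector $\nsec_n$ lifts to a disk in the $G$-cover. Your two proposed routes (local normal-crossings models, or a refined sectorial decomposition along $\LL_{H_G}$) would also work but are more elaborate than needed; the paper's disk-lifting observation already gives $c_i \neq 0$ immediately. Also, your intermediate claim that the $\Gv$-equivariant sections of $\cO_{\ppp}(1)$ are spanned by the monomials $\prod_j z_j^{a_{ji}}$ is phrased in a way that invites confusion about degrees (those monomials are not linear on $\PP^{n-1}$); the paper sidesteps this by simply asserting that any generic section of $\cO_{\ppp_\Gv}(1)$ can be rescaled to $W^\vee$, which is what your argument ultimately establishes as well.
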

\begin{proof}We have already computed in Lemma \ref{lem:mon-aut} that the clockwise monodromy automorphism $\mu^{-1}$ of $\cW(V_{\frac12})\cong \Coh(\ppp_\Gv)$ is given by tensoring with the line bundle $\cO_{\ppp_{\Gv}}(-1),$ and we saw in Theorem \ref{thm:sylvan} that the cap-cup monad admits a presentation as $\Cone(\mu^{-1}\to \id_{\cW(F)}),$ which in our case is therefore a natural transformation between the functor of tensoring with $\cO_{\ppp_\Gv}(-1)$ and the identity functor (which is tensoring with $\cO_{\ppp_\Gv}$), or in other words (after tensoring with $\cO(1)$), a map
	\[
		\cO_{\ppp_\Gv}\to \cO_{\ppp_\Gv}(1),
	\]
	i.e., a section on $\ppp_\Gv$ of the bundle $\cO_{\ppp_\Gv}(1).$ Moreover, this section is generic: this can be seen as in \cite[Theorem 5.1]{Nad-cnw} (the case of $P_L$, discussed in the previous section) by observing that the monad acts as 0 on the components of $\LL_{V_{\frac12}}$ which are mirror to 0-dimensional toric strata of $\ppp_{\Gv}$; alternatively, one can recall (cf. \cite[\S 1.3]{abouzaid-auroux}) that $s$ is a count of holomorphic disks, and that each of the $n$ disks it counts in $\nsec_n$ will lift to a disk in the $G$-cover.

	Finally, we note that any generic section of $\ppp_\Gv$ can be made equal to $W^\vee$ after a rescaling of its coefficients.
\end{proof}
\begin{corollary} 
	\label{cor:vl-def}
	Assuming Conjecture \ref{conj:main}, there is an equivalence of categories $\cW(V_L)\cong \MF^\Gv(\tcn,W^\vee).$
\end{corollary}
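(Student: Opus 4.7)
The plan is to apply Conjecture~\ref{conj:main} directly to the Landau-Ginzburg sector $(V_L, f)$ and then translate each ingredient in the resulting matrix-factorization description via the $B$-side dictionary assembled in \S\ref{sec:veryaffine}. By Lemma~\ref{lem:ffacts}, the map $f: V_L \to \CC$ has a unique critical value at $0$, and its very affine locus is $\Vc_L$, so the conjecture yields
\[
	\cW(V_L) \cong \MF(\cW(\Vc_L), \tils),
\]
where $\tils \in HH^0(\cW(\Vc_L))$ is constructed from the natural transformation $s: \mu^{-1} \to \id_{\cW(V_{\frac12})}$ underlying the cap--cup adjunction for $V_L$.

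The next step is to invoke Proposition~\ref{prop:pantscalc}, which identifies $\cW(\Vc_L) \cong \Coh^\Gv(\tpc)$ together with the explicit realization of this equivalence via the relative spectrum $\tpc = \underline{\Spec}_{\ppp_\Gv} \Sym_{\cO_{\ppp_\Gv}} \cO(1)$: an object $(L, \nu: \mu L \to L)$ of $\cW(\Vc_L)$ corresponds to a coherent sheaf $\cF$ on $\ppp_\Gv$ equipped with structure map $\nu: \cF(1) \to \cF$. By Lemma~\ref{lem:generic}, after an inessential rescaling of coefficients the natural transformation $s$ is the map $\cO_{\ppp_\Gv}(-1) \to \cO_{\ppp_\Gv}$ determined by the section $W^\vee \in \Gamma(\ppp_\Gv, \cO(1))$.

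The one remaining step, which I expect to be the crux of the translation rather than a true obstacle, is to verify that under the equivalence of Proposition~\ref{prop:pantscalc} the Hochschild class $\tils$ corresponds to the fiberwise-linear extension $\widetilde{W^\vee}$ of $W^\vee$ to a global function on the total space $\tpc = \Tot(\cO_{\ppp_\Gv}(-1))$. This is a direct unwinding of the definition of $\tils$ given just before Conjecture~\ref{conj:main}: it acts on $(L,\nu)$ by the composition $L \xrightarrow{s_L} \mu L \xrightarrow{\nu} L$, which under the module-over-$\Sym$ description translates into $\cF \xrightarrow{\,\cdot W^\vee\,} \cF(1) \xrightarrow{\nu} \cF$. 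This is precisely the action of the degree-$1$ generator of $\Sym_{\cO_{\ppp_\Gv}} \cO(1)$ corresponding to $W^\vee$, i.e., the action of $\widetilde{W^\vee}$ on the $\Sym$-module $\cF$.

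Assembling the three steps and applying Definition~\ref{def:mfax-gen}, we conclude
\[
	\cW(V_L) \cong \MF(\Coh^\Gv(\tpc),\, \widetilde{W^\vee}) = \MF^\Gv(\tcn, W^\vee),
\]
as required. Note that in the degenerate case of Remark~\ref{rem:zero}, where $W^\vee$ vanishes on the toric boundary, the class $\tils$ is zero and the deformation is trivial; this is consistent with the observation following Lemma~\ref{lem:conservative} that monadicity of the cap--cup adjunction is not needed to recover $\cW(V_L)$ in this case.
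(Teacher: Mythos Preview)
Your argument is correct and proceeds exactly as the paper does: invoke Conjecture~\ref{conj:main}, then use Propositions~\ref{prop:mir-mid} and~\ref{prop:pantscalc} together with Lemmas~\ref{lem:mon-aut} and~\ref{lem:generic} to translate the deformation class $\tils$ into the fiberwise-linear extension $\widetilde{W^\vee}$ on $\tpc/\Gv$. Your explicit unwinding of why $\tils$ acts on $(\cF,\nu)$ as multiplication by $\widetilde{W^\vee}$ is in fact more detailed than the paper's own proof; one small correction is that the statement that $f|_{V_L}$ has its unique critical value at $0$ is not the content of Lemma~\ref{lem:ffacts} (which concerns the nondegenerate critical value of $\fc$ at $1$) but rather comes from the sectorial construction in \S\ref{sec:geom}.
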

\begin{proof}This is a straightforward application of Conjecture \ref{conj:main}, bringing together our results from earlier in this section: In Proposition \ref{prop:pantscalc}, we computed that $\cW(V_L)\cong\Coh^\Gv(\tcn)$; in Proposition \ref{prop:mir-mid}, we computed $\cW(V_{\frac12}) \cong \Coh(\ppp_\Gv)$; in Lemma \ref{lem:mon-aut}, we described the clockwise monodromy automorphism $\mu^{-1}$ as tensoring with $\cO_{\ppp_\Gv}(-1),$ and in Lemma \ref{lem:generic}, we saw that the disk-counting section $s:\mu^{-1}\to \id_{\cW(V_{\frac12})}$ was the function $W^\vee.$

	We conclude that, assuming Conjecture \ref{conj:main}, we have an equivalence of categories $\cW(V_L)\cong \MF(\cW(\Vc_L), \tils) \cong \MF(\ppp_\Gv, W^\vee).$
\end{proof}

We now reach the main theorem of this paper.

\begin{theorem}\label{thm:main}
	Assuming Conjecture \ref{conj:main}, the wrapped Fukaya category $\cW(V)$ is the deformation of the 2-periodic dg-category $\Coh^{\Gv}(\pcn)_{\ZZ/2}$ by the function $W^\vee\in \CC[\pcn]^{\Gv}.$ 
	In other words, there is an equivalence of 2-periodic dg categories
  \[
		\cW(V)\cong \MF^{\Gv}(\CC^{n+1}, z_0z_1\cdots z_n+W^\vee)
  \]
	between the wrapped Fukaya category of the Milnor fiber $V$ and the category of matrix factorizations on $\CC^{n+1}$ for the function $z_0z_1\cdots z_n + W^\vee.$
\end{theorem}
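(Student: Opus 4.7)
The plan is to combine the sectorial decomposition $V = V_L \cup_{V_{\frac12}} V_R$ from Proposition \ref{prop:G-va-sectcov} with sectorial codescent (Theorem \ref{thm:codescent}) to present $\cW(V)$ as a homotopy colimit, and then to match this colimit term-by-term with the deformed B-side colimit of Lemma \ref{lem:colim-undef} (the displayed equation \eqref{eq:colim-def}). Concretely, sectorial codescent yields
\[
  \cW(V) \cong \colim\left(\cW(V_L) \gets \cW(V_{\frac12}) \to \cW(V_R)\right),
\]
where the arrows are the cup functors induced by the inclusion of $V_{\frac12}\times T^*\RR$ into each side. The identifications $\cW(V_L) \cong \MF^{\Gv}(\tcn, W^\vee)$ (Corollary \ref{cor:vl-def}), $\cW(V_{\frac12}) \cong \Coh(\ppp_\Gv)_{\ZZ/2}$ (Proposition \ref{prop:mir-mid}), and $\cW(V_R) \cong \Coh^{\Gv}(\{0\})_{\ZZ/2}$ (a disjoint union of $|G|$ cotangent disks of points, from the description of $V_R$ in Proposition \ref{prop:G-va-sectcov}) take care of the three vertices.

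The main content of the proof is to verify that, under these three identifications, the two structure maps out of $\cW(V_{\frac12})$ correspond to the pushforward along $\ppp_\Gv\hookrightarrow \tcn/\Gv$ and the projection $\ppp_\Gv\to \{0\}/\Gv$, respectively, deformed by $W^\vee$. Since $W^\vee$ vanishes along $\ppp_\Gv$, the deformations on the central and right-hand vertices are trivial and the comparison reduces to the undeformed statement of Lemma \ref{lem:colim-undef}. This undeformed matching of cup functors was already carried out in the proof of Lemma \ref{lem:vc-prescat} in the case of $\Vc$: the left cup functor was identified as the pullback to the zero-section of $\Tot(\cO_{\ppp}(-1))$, and the right cup functor was identified as sending the generator to $\cO_{\ppp}$ via the disk-attachment along the vanishing cycle $S$. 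Since the sectorial inclusion $V_{\frac12}\times T^*\RR \hookrightarrow V_L$ is the same inclusion as in the $\Vc_L$ computation (just partially compactified by the degeneration at 0), and the deformation class on $\cW(V_L)$ tracks the function $W^\vee$ under Conjecture \ref{conj:main} compatibly with its restriction along $\ppp_\Gv \hookrightarrow \tcn/\Gv$, the two colimit diagrams agree. Invoking Lemma \ref{lem:colim-undef} in its deformed form gives
\[
  \cW(V) \cong \MF^{\Gv}(\pcn, W^\vee).
\]

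To convert this to the stated form, apply Lemma \ref{lem:orlov-1} with $Y = \CC^n/\Gv$, $f = x_1\cdots x_n$, $X = \partial\CC^n/\Gv = \{f=0\}$, and $g = W^\vee$: since $W^\vee$ is a quasihomogeneous polynomial, its only critical value is $0$, so the hypothesis is satisfied and we obtain
\[
  \MF^{\Gv}(\pcn, W^\vee) \cong \MF^{\Gv}(\CC_t\times \CC^n, W^\vee + t\, x_1\cdots x_n),
\]
which after relabeling $t = z_0$ and $x_i = z_i$ is the right-hand side of the theorem.

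The hard part, as is typical in this kind of argument, is justifying the compatibility of the A-side cup functor with the B-side pushforward $i_*\colon \Coh(\ppp_\Gv) \to \Coh^{\Gv}(\tcn)$ \emph{after deformation by $W^\vee$}. The undeformed version is handled in Lemma \ref{lem:vc-prescat}, so the real issue is that Corollary \ref{cor:vl-def} is only conditional on Conjecture \ref{conj:main}, and we must use its content to make this compatibility precise: the section $s$ producing the deformation class $\tils$ on $\cW(\Vc_L)$ must agree, under the mirror identifications, with the restriction of $W^\vee$ from $\tcn/\Gv$ to $\ppp_\Gv$. This is exactly what Lemma \ref{lem:generic} provides (using the lift of the disk count from \cite{Nad-cnw} to the $G$-cover and the observation that all coefficients can be rescaled to match those of $W^\vee$), so the remaining step is bookkeeping: transferring the deformation class from $\cW(V_{\frac12})$ to the colimit is compatible with its lift to a class on $\cW(V_L)$ precisely because Conjecture \ref{conj:main} identifies the deformation of the vertex category with the value of the Fukaya category on the compactified sector.
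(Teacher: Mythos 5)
Your proposal is correct and follows essentially the same route as the paper: decompose $V$ into $V_L\cup_{V_{\frac12}}V_R$, observe that only the left-hand vertex changes relative to the $\Vc$ computation of Lemma \ref{lem:colim-undef}, substitute $\cW(V_L)\cong\MF^{\Gv}(\tpn,W^\vee)$ from Corollary \ref{cor:vl-def}, and conclude via the deformed colimit \eqref{eq:colim-def} and the Orlov-type equivalence \eqref{eq:mainex-def}. Your write-up is somewhat more explicit than the paper's about matching the structure maps and about the final conversion to the matrix-factorization form, but the content is the same.
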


\begin{proof}[Proof of Theorem \ref{thm:main}]

The proof of Theorem~\ref{thm:main} begins from the equivalence
\begin{equation}\label{eq:bcolim-undef}
\Coh^{\Gv}(\pcn)\cong \colim\left( \Coh^{\Gv}(\tpc)\gets \Coh(\ppp_\Gv)\to\Coh^{\Gv}(\{0\}) \right).
\end{equation}
proved in Lemma \ref{lem:colim-undef}.
The right-hand-side of \eqref{eq:bcolim-undef} corresponds to the colimit description of the wrapped Fukaya category $\cW(\Vc)$ given by the cover of $\Vc$ by subsectors sectors $\Vc_L,\Vc_R,$ and their intersection $\Vc_{\frac12}\times T^*\RR.$

We have seen that the Weinstein manifold $V$ has an analogous Liouville-sectorial decomposion, and in fact the right and middle subsectors $\Vc_R,\Vc_{\frac12}$ of $\Vc$ are equal to the corresponding subsectors $V_R,V_{\frac12}$ of $V$. Hence the category $\cW(V)$ admits a colimit presentation as in \eqref{eq:bcolim-undef}, but with the category $\cW(\Vc_L) \cong \Coh^{\Gv}(\tpn)$ replaced by its deformation
\begin{equation}\label{eq:vl-eqs}
\cW(V_L) \cong 
\MF^{\Gv}(\tpn,W^\vee),
\end{equation}
where the equivalence in \eqref{eq:vl-eqs} comes from Corollary \ref{cor:vl-def} (conditional on Conjecture \ref{conj:main}).

\end{proof}

%
%
%
%
%

\appendix
\section{Maslov data and 2-periodicity}
\label{sec:appendix}
In this section, we recall the data needed to define either the wrapped Fukaya category of a Liouville sector or the cosheaf of wrapped microlocal sheaves, mostly following the exposition in \cite[\S 5.3]{GPS3} and \cite[\S 10]{NS20} (to which we refer the reader interested in a more detailed discussion), and then we will explain the simplifications that occur in the 2-periodic case.

\subsubsection*{Grading data}
Let $X$ be a Weinstein manifold. Traditionally (cf. \cite[\S\S 11e-11l]{Sei-book}), the data necessary to define the Fukaya category with $\ZZ$ coefficients has been understood to be a choice of class $H^2(X;\ZZ/2)$ together with a trivialization of the complex line bundle $(\wedge_\CC^{\on{top}}TX)^{\otimes 2},$ which can be understood as a choice of class in $H^1(\LGr(X);\ZZ)$ whose restriction to each fiber $\LGr_x(X)$ represents the Maslov class.

This data can be better encapsulated, and generalized to coefficients in a general ring $R$, as follows: the stable tangent bundle of $X$ is classified by a map $X \to BU$, and the stable Lagrangian Grassmannian $\LGr(X)$ is classified by the composition 
$X\to BU \to B(U/O) = B^2(\ZZ\times BO).$ 
The delooped J-homomorphism gives a map 
$B^2(\ZZ\times BO)\xrightarrow{B^2J}B^2\Pic(\SS)$
to the delooping of the spectrum of invertible modules for the sphere spectrum $\SS.$ For any ring $R$, there is a map $\Pic(\SS)\to \Pic(R)$ induced by the map $\SS\to \RR.$
\begin{definition}
	{\em Grading/orientation data} for $X$ with $R$ coefficients is a trivialization of the composition
	\begin{equation}\label{eq:grading-data}
		X\xrightarrow{\LGr} B(U/O)=B^2(\ZZ\times BO)\xrightarrow{B^2J} B^2\Pic(\SS)\to B^2\Pic(R).
	\end{equation}
\end{definition}

When $R=\ZZ,$ the space of invertible $\ZZ$-modules is $\Pic(\ZZ) =\ZZ\times B\ZZ/2$: there are $\ZZ$ distinct classes of invertible $\ZZ$-modules, namely the homological shifts $\ZZ[n]$ of the rank 1 free module, and they have automorphism group $\ZZ/2$ generated by multiplication by $-1$.

Grading/orientation data with $\ZZ$ coefficients is therefore given by a nulhomotopy of the map
\[
	X\to B^2\ZZ\times B^3(\ZZ/2),
\]
where it can be shown that the first factor (``grading data'') classifies $(\wedge^{\on{top}}_\CC TX)^{\otimes 2}$ and the second factor (``orientation data'') admits a canonical trivialization (giving a correspondence between choices of nulhomotopy for the second factor and maps $X\to B^2(\ZZ/2)$). We therefore see that grading/orientation data for $\ZZ$ coefficients agrees with the traditional data used to define the Fukaya category.

In practice, there is a universal way of constructing grading/orientation data for a symplectic manifold.
\begin{definition}
	A {\em stable polarization} of $X$ is a trivialization of the map $X\to B(U/O)$ classifying the Lagrangian Grassmannian bundle $\LGr(X).$
\end{definition}
It is clear that a stable polarization induces grading/orientation data for any ring $R$, since a trivialization of $X\to B(U/O)$ trivializes the whole composition \eqref{eq:grading-data}.

As $\LGr=U/O$ is an infinite loop space, a trivialization of the stable Lagrangian Grassmannian bundle $\LGr(X)$ is the same as a section of it, or equivalently a section $\sigma$ of the Lagrangian Grassmannian of the stable symplectic normal bundle of $X$ (which is the negative of the stable tangent bundle). 

In \cite{Sh-hp,NS20} it is explained that if $X$ is a Weinstein manifold with skeleton $\LL,$ then $X$ admits a possibly high-codimension embedding into a cosphere bundle $S^*M,$ and the data of a section $\sigma$ as above is precisely the data necessary to thicken $\LL$ to a Legendrian $\LL^\sigma$ in $S^*M$ and therefore to define microlocal sheaves on $\LL^\sigma$. In \cite{GPS3} it is shown that the category so defined agrees (up to passage to opposite categories) with the wrapped Fukaya category of $X$, defined using grading/orientation data coming from the stable polarization $\sigma.$ These are the results we have summarized as Theorems \ref{thm:global-mush} and \ref{thm:gps-maincomparison} above.

\subsection{Gluing Fukaya categories}
At various points in this paper, we will study a Weinstein manifold or sector $X$ whose skeleton $\LL$ admits a cover $\LL = U_1\cup U_2$ by two open sets $U_i$ intersecting in $U_{12}:=U_1\cap U_2,$ and we would like to present the wrapped Fukaya category of $X$ as a gluing of the categories $\wmsh(U_1),\wmsh(U_2)$ along $\wmsh(U_{12}).$ We may compute each of these categories with locally chosen grading/orientation data $\sigma_1,\sigma_2,\sigma_{12}$, but in order to ensure that these categories glue into a global Fukaya category of $X$, we must show that the restrictions of $\sigma_i$ to $U_{12}$ are both homotopic to $\sigma_{12},$ so that, up to homotopy, this data glues into a global choice of grading/orientation data on $X$.

All of the symplectic manifolds studied in this paper are complete intersections in $\CC^n$ or $(\CC^\times)^n,$ and their grading/orientation data is induced from the standard polarizations of $\CC^n = T^*\RR^n$ and $(\CC^\times)^n = T^*((S^1)^n).$ To establish that our gluings of Fukaya categories are sensible, it is therefore sufficient to check that grading/orientation data coming from these polarizations agree, which we will now do. (We will see that making the grading data for these two polarizations agree will require altering our coefficient ring.)

\subsubsection*{Orientation data}
As described above, grading/orientation data with integral coefficients is a nulhomotopy of the map
\[
	X\to B^2\ZZ \times B^3(\ZZ/2),
\]
where the second factor admits a canonical trivialization.
A stable polarization $\sigma$ is one way to trivialize this map, but the trivialization on the second factor may not agree with the canonical one.
Indeed, as explained in \cite[Lemma 3.9]{CKNS}, the difference between these two trivializations is measured by the second Stiefel-Whitney class $w_2(\sigma).$

If $X=T^*M$ is a cotangent bundle and $\sigma$ is its cotangent fiber polarization, this means that the orientation data determined by $\sigma$ differs from the canonical trivialization of the map $X\to B^3(\ZZ/2)$ if and only if $w_2(M)$ is nonzero. We conclude that the cotangent fiber trivialization on $\CC^n$ or on $(\CC^\times)^n$ induces the canonical orientation data.

\subsubsection*{Grading data and 2-periodicity}

In contrast to the situation for orientation data, the grading data (with integral coefficients) for the cotangent fiber polarizations on $\CC^n$ and $(\CC^\times)^n$ does not agree. The discrepancy between the two trivializations of the map to $B^2\ZZ$ is measured by a map to $\Omega B^2\ZZ= B\ZZ.$
As a map $\CC^\times \to B\ZZ,$ this is homotopic to the map $S^1\to S^1$ of degree 2, corresponding to the fact that traversing the Maslov cycle in $\LGr$ acts by the automorphism $[2]$ of degree-shift by 2.


However, the discrepancy vanishes if we work with 
coefficients in $\ZZ((\beta)),$ where $\beta$ is a variable of homological degree 2.
Grading/orientation data for $\ZZ((\beta))$ is a trivialization of the composite map
\[
	X\to B^2(\Pic(\ZZ)) = B^2(\ZZ\times B\ZZ/2)\to B^2(\ZZ/2\times B\ZZ/2) = \Pic(\ZZ((\beta))),
\]
where the quotient $\ZZ\to \ZZ/2$ reflects the fact that the $n$-shifted invertible modules $\ZZ((\beta))[n]$ of the same parity are all isomorphic to each other. 

We conclude that the cotangent fiber polarizations of $\CC$ and $\CC^\times$ induce the same grading data with $\ZZ((\beta))$ coefficients. Dg-categories linear over $\ZZ((\beta))$ may equivalently be thought of as 2-periodic ($\ZZ$-linear) dg-categories, with 2-periodicity element $\beta.$ In other words, we have shown that so long as we work with 2-periodic dg-categories, all the restrictions of the grading and orientation data used in this paper agree up to homotopy.

\bibliographystyle{alpha}
\bibliography{refs}

\begin{thebibliography}{FLTZ12b}

\bibitem[AA]{abouzaid-auroux}
Mohammed Abouzaid and Denis Auroux.
\newblock Homological mirror symmetry for hypersurfaces in $(\mathbb{C}^*)^n$.
\newblock arXiv:2111.06543.

\bibitem[Abo06]{A1}
Mohammed Abouzaid.
\newblock Homogeneous coordinate rings and mirror symmetry for toric varieties.
\newblock {\em Geom. Topol.}, 10:1097--1156, 2006.

\bibitem[AGEN]{AGEN3}
Daniel \'Alvarez-Gavela, Yakov Eliashberg, and David Nadler.
\newblock Positive arborealization of polarized weinstein manifolds.
\newblock arXiv:2011.08962.

\bibitem[Bei87]{Beilinson87}
A.~A. Beilinson.
\newblock {\em How to glue perverse sheaves}, pages 42--51.
\newblock Springer Berlin Heidelberg, Berlin, Heidelberg, 1987.

\bibitem[BH93]{BH93}
Per Berglund and Tristan H{\"u}bsch.
\newblock A generalized construction of mirror manifolds.
\newblock {\em Nuclear Physics B}, 393(1):377 -- 391, 1993.

\bibitem[BKS18]{BKS}
Alexey Bondal, Mikhail Kapranov, and Vadim Schechtman.
\newblock Perverse schobers and birational geometry.
\newblock {\em Selecta Mathematica}, 24(1):85--143, Mar 2018.

\bibitem[Bon06]{Bo}
Alexei Bondal.
\newblock Derived categories of toric varieties.
\newblock In {\em Convex and Algebraic Geometry, {O}berwolfach Conference
  Reports}, volume~3, pages 284--286. 2006.

\bibitem[CCJ]{CCJ}
Cheol-Hyun Cho, Dongwook Choa, and Wonbo Jeong.
\newblock Fukaya category for {L}andau-{G}inzburg orbifolds and
  {B}erglund-{H}{\"u}bsch conjecture for invertible curve singularities.
\newblock arXiv:2010.09198.

\bibitem[CKNS]{CKNS}
Laurent C\^ot\'e, Christopher Kuo, David Nadler, and Vivek Shende.
\newblock Perverse microsheaves.
\newblock arXiv:2209.12998.

\bibitem[Dyc11]{Dyck-compactgen}
Tobias Dyckerhoff.
\newblock Compact generators in categories of matrix factorizations.
\newblock {\em Duke Math. J.}, 159(2):223--274, 08 2011.

\bibitem[Eli18]{Eliashberg-revisited}
Yakov Eliashberg.
\newblock Weinstein manifolds revisited.
\newblock In {\em Modern geometry: a celebration of the work of {S}imon
  {D}onaldson}, volume~99 of {\em Proc. Sympos. Pure Math.}, pages 59--82.
  Amer. Math. Soc., Providence, RI, 2018.

\bibitem[FJR]{FJR}
Huijun Fan, Tyler~J. Jarvis, and Yongbin Ruan.
\newblock The {W}itten equation and its virtual fundamental cycle.
\newblock arXiv:0712.4025.

\bibitem[FLTZ11]{FLTZ2}
Bohan Fang, Chiu-Chu~Melissa Liu, David Treumann, and Eric Zaslow.
\newblock A categorification of {M}orelli's theorem.
\newblock {\em Inventiones mathematicae}, 186(1):79--114, Oct 2011.

\bibitem[FLTZ12a]{FLTZ1}
Bohan Fang, Chiu-Chu~Melissa Liu, David Treumann, and Eric Zaslow.
\newblock T-duality and homological mirror symmetry for toric varieties.
\newblock {\em Advances in Mathematics}, 229(3):1873 -- 1911, 2012.

\bibitem[FLTZ12b]{FLTZ3}
Bohan Fang, Chiu-Chu~Melissa Liu, David Treumann, and Eric Zaslow.
\newblock {The Coherent-Constructible Correspondence for Toric Deligne-Mumford
  Stacks}.
\newblock {\em International Mathematics Research Notices}, 2014(4):914--954,
  11 2012.

\bibitem[GMW]{GMW}
Benjamin Gammage, Michael McBreen, and Ben Webster.
\newblock Homological mirror symmetry for hypertoric varieties {II}.
\newblock arXiv:1903.07928.

\bibitem[GP17]{Giroux-Pardon}
Emmanuel Giroux and John Pardon.
\newblock Existence of {L}efschetz fibrations on {S}tein and {W}einstein
  domains.
\newblock {\em Geom. Topol.}, 21(2):963--997, 2017.

\bibitem[GPSa]{GPS3}
Sheel Ganatra, John Pardon, and Vivek Shende.
\newblock Microlocal {M}orse theory of wrapped {Fukaya} categories.
\newblock arXiv:{1809.08807}.

\bibitem[GPSb]{GPS2}
Sheel Ganatra, John Pardon, and Vivek Shende.
\newblock Sectorial descent for wrapped {F}ukaya categories.
\newblock arXiv:{1809.03472}.

\bibitem[GPS19]{GPS1}
Sheel Ganatra, John Pardon, and Vivek Shende.
\newblock Covariantly functorial wrapped {Floer} theory on {Liouville} sectors.
\newblock {\em Publ. Math. IH\'{E}S}, 2019.

\bibitem[GSa]{GS17}
Benjamin Gammage and Vivek Shende.
\newblock Mirror symmetry for very affine hypersurfaces.
\newblock arXiv:1707.02959.

\bibitem[GSb]{Gam-Smith}
Benjamin Gammage and Jack Smith.
\newblock Homological {B}erglund-{H}{\"u}bsch mirror symmetry.
\newblock In preparation.

\bibitem[Hab]{Hab}
Matthew Habermann.
\newblock Homological mirror symmetry for invertible polynomials in two
  variables.
\newblock arXiv:2003.01106.

\bibitem[Jef22]{Jeffs}
Maxim Jeffs.
\newblock Mirror symmetry and {F}ukaya categories of singular hypersurfaces.
\newblock {\em Adv. Math.}, 397:Paper No. 108116, 36, 2022.

\bibitem[KS]{KS1}
Mikhail Kapranov and Vadim Schechtman.
\newblock Perverse schobers.
\newblock arXiv:1411.2772.

\bibitem[KS94]{KS94}
Masaki Kashiwara and Pierre Schapira.
\newblock {\em Sheaves on manifolds}, volume 292 of {\em Grundlehren der
  Mathematischen Wissenschaften [Fundamental Principles of Mathematical
  Sciences]}.
\newblock Springer-Verlag, Berlin, 1994.
\newblock With a chapter in French by Christian Houzel, Corrected reprint of
  the 1990 original.

\bibitem[KS16]{KS2}
Mikhail Kapranov and Vadim Schechtman.
\newblock Perverse sheaves over real hyperplane arrangements.
\newblock {\em Ann. of Math. (2)}, 183(2):619--679, 2016.

\bibitem[Kuw20]{Ku}
Tatsuki Kuwagaki.
\newblock The nonequivariant coherent-constructible correspondence for toric
  stacks.
\newblock {\em Duke Math. J.}, 169(11):2125--2197, 08 2020.

\bibitem[LP20]{LP20}
Yank{\i} Lekili and Alexander Polishchuk.
\newblock Homological mirror symmetry for higher dimensional pairs of pants.
\newblock {\em Compositio Math.}, 156:1310--1347, 2020.

\bibitem[LUa]{LU2}
Yank{\i} Lekili and Kazushi Ueda.
\newblock Homological mirror symmetry for {M}ilnor fibers of simple surface
  singularities.
\newblock arXiv:2004.07374.

\bibitem[LUb]{LU1}
Yank{\i} Lekili and Kazushi Ueda.
\newblock Homological mirror symmetry for {M}ilnor fibers via moduli of
  ${A}_\infty$ structures.
\newblock arXiv:1806.04345.

\bibitem[Mik04]{M}
Grigory Mikhalkin.
\newblock Decomposition into pairs-of-pants for complex algebraic
  hypersurfaces.
\newblock {\em Topology}, 43(5):1035 -- 1065, 2004.

\bibitem[MV86]{MV86}
Robert MacPherson and Kari Vilonen.
\newblock Elementary construction of perverse sheaves.
\newblock {\em Invent. Math.}, 84(2):403--435, 1986.

\bibitem[Nad]{Nad-wms}
David Nadler.
\newblock Wrapped microlocal sheaves on pairs of pants.
\newblock arXiv:1604.00114.

\bibitem[Nad19]{Nad-cnw}
David Nadler.
\newblock Mirror symmetry for the {L}andau-{G}inzburg ${A}$ -model
  ${M}=\mathbb{C}^{n}$ , ${W}=z_{1}\cdots z_{n}$.
\newblock {\em Duke Math. J.}, 168(1):1--84, 01 2019.

\bibitem[NS]{NS20}
David Nadler and Vivek Shende.
\newblock Sheaf quantization in {W}einstein symplectic manifolds.
\newblock arXiv:2007.10154.

\bibitem[Orl06]{Or-eq}
Dmitri Orlov.
\newblock Triangulated categories of singularities and equivalences between
  landau-ginzburg models.
\newblock {\em Russian Academy of Sciences Sbornik Mathematics},
  197:1827--1840, 11 2006.

\bibitem[Orl09]{or-lgcy}
Dmitri Orlov.
\newblock {\em Derived Categories of Coherent Sheaves and Triangulated
  Categories of Singularities}, pages 503--531.
\newblock Birkh{\"a}user Boston, Boston, 2009.

\bibitem[Pre]{Preygel}
Anatoly Preygel.
\newblock Thom-{S}ebastiani and duality for matrix factorizations, and results
  on the higher structures of the {H}ochschild invariants.
\newblock PhD thesis.

\bibitem[Sai81]{Saito1}
Kyoji Saito.
\newblock Primitive forms for a universal unfolding of a function with an
  isolated critical point.
\newblock {\em J. Fac. Sci. Univ. Tokyo Sect. IA Math.}, 28(3):775--792 (1982),
  1981.

\bibitem[Sei03]{Sei-quartic}
Paul Seidel.
\newblock Homological mirror symmetry for the quartic surface.
\newblock {\em Memoirs of the American Mathematical Society}, 236, 2003.

\bibitem[Sei08]{Sei-book}
Paul Seidel.
\newblock {\em Fukaya categories and {P}icard-{L}efschetz theory}.
\newblock European Mathmatical Society, 2008.

\bibitem[Sei11]{Sei-genus2}
Paul Seidel.
\newblock Homological mirror symmetry for the genus two curve.
\newblock {\em J. Algebraic Geom.}, 20(4):727--769, 2011.

\bibitem[She]{Sher3}
Nick Sheridan.
\newblock Versality of the relative {F}ukaya category.
\newblock arXiv:1709.07874.

\bibitem[She15]{Sher1}
Nick Sheridan.
\newblock Homological mirror symmetry for {C}alabi-{Y}au hypersurfaces in
  projective space.
\newblock {\em Invent. Math.}, 199(1):1--186, 2015.

\bibitem[She16]{Sher2}
Nick Sheridan.
\newblock On the {F}ukaya category of a {F}ano hypersurface in projective
  space.
\newblock {\em Publ. Math. Inst. Hautes \'{E}tudes Sci.}, 124:165--317, 2016.

\bibitem[She21]{Sh-hp}
Vivek Shende.
\newblock Microlocal category for {W}einstein manifolds via the h-principle.
\newblock {\em Publ. Res. Inst. Math. Sci.}, 57([3-4]):1041--1048, 2021.

\bibitem[Syl]{Sylvan-orlov}
Zachary Sylvan.
\newblock {O}rlov and {V}iterbo functors in partially wrapped {F}ukaya
  categories.
\newblock arXiv:1908.02317.

\bibitem[Syl19]{Sylvan}
Zachary Sylvan.
\newblock On partially wrapped {F}ukaya categories.
\newblock {\em J. Topol.}, 12(2):372--441, 2019.

\bibitem[Tel20]{Teleman-morsebott}
Constantin Teleman.
\newblock Matrix factorization of {M}orse--{B}ott functions.
\newblock {\em Duke Math. J.}, 169(3):533--549, 2020.

\bibitem[To{\"e}07]{Toen-morita}
Bertrand To{\"e}n.
\newblock The homotopy theory of {$dg$}-categories and derived {M}orita theory.
\newblock {\em Invent. Math.}, 167(3):615--667, 2007.

\bibitem[Ver85]{Verdier85}
Jean-Louis Verdier.
\newblock Extension of a perverse sheaf over a closed subspace.
\newblock In {\em Syst\`emes diff\'erentiels et singularit\'es}, number 130 in
  Ast\'erisque. Soci\'et\'e math\'ematique de France, 1985.

\bibitem[Zho19]{Zhou-ccc}
Peng Zhou.
\newblock Twisted polytope sheaves and coherent-constructible correspondence
  for toric varieties.
\newblock {\em Sel. Math. New Ser.}, 25(1), 2019.

\bibitem[Zho20]{Zhou-skel}
Peng Zhou.
\newblock Lagrangian skeleta of hypersurfaces in $(\mathbb{C}^\times)^n$.
\newblock {\em Sel. Math. New Ser.}, 26(26), 2020.

\end{thebibliography}

\end{document}